\newcommand{\bbbt}{\mathbb{T}}
\newcommand{\scrt}{\mathscr{T}}
\newcommand{\be}{\begin{equation}}
	\newcommand{\ee}{\end{equation}}
\newcommand{\bea}{\begin{eqnarray}}
	\newcommand{\eea}{\end{eqnarray}}
\newcommand{\bean}{\begin{eqnarray*}}
	\newcommand{\eean}{\end{eqnarray*}}
\newcommand{\brray}{\begin{array}}
	\newcommand{\erray}{\end{array}}
\newcommand{\biearray}{\begin{IEEEarray}{rCl}}
	\newcommand{\eiearray}{\end{IEEEarray}}
\newcommand{\newsection}[1]{\setcounter{equation}{0}
	\setcounter{dfn}{0}
	\section{#1}}
\newtheorem{dfn}{Definition}[section]
\newtheorem{thm}[dfn]{Theorem}
\newtheorem{lmma}[dfn]{Lemma}
\newtheorem{ppsn}[dfn]{Proposition}
\newtheorem{crlre}[dfn]{Corollary}
\newtheorem{xmpl}[dfn]{Example}
\newtheorem{rmrk}[dfn]{Remark}
\newtheoremstyle{case}{}{}{}{}{}{:}{ }{}
\theoremstyle{case}
\newtheoremstyle{claim}{}{}{}{}{}{:}{ }{}
\theoremstyle{claim}
\newtheorem{claim}{Claim}
\newcommand{\bdfn}{\begin{dfn}\rm}
	\newcommand{\bthm}{\begin{thm}}
		\newcommand{\blmma}{\begin{lmma}}
			\newcommand{\bppsn}{\begin{ppsn}}
				\newcommand{\bcrlre}{\begin{crlre}}
					\newcommand{\bxmpl}{\begin{xmpl}}
						\newcommand{\brmrk}{\begin{rmrk}\rm}
							\newcommand{\edfn}{\end{dfn}}
						\newcommand{\ethm}{\end{thm}}
					\newcommand{\elmma}{\end{lmma}}
				\newcommand{\eppsn}{\end{ppsn}}
			\newcommand{\ecrlre}{\end{crlre}}
		\newcommand{\exmpl}{\end{xmpl}}
	\newcommand{\ermrk}{\end{rmrk}}
\newcommand{\bbc}{\mathbb{C}}
\newcommand{\bbz}{\mathbb{Z}}
\newcommand{\bbn}{\mathbb{N}}
\newcommand{\cla}{\mathcal{A}}
\newcommand{\clh}{\mathcal{H}}
\newcommand{\clk}{\mathcal{K}}
\newcommand{\cll}{\mathcal{L}}
\def \bbt {\mbox{\boldmath $t$}}
\newcommand{\prf}{\noindent{\it Proof}. }
\def \qed { \mbox{}\hfill
	$\Box$\vspace{1ex}}
\newcommand\restr[2]{{
		\left.\kern-\nulldelimiterspace 
		#1 
		\littletaller 
		\right|_{#2} 
}}
\newcommand{\littletaller}{\mathchoice{\vphantom{\big|}}{}{}{}}
\newcommand\scalemath[2]{\scalebox{#1}{\mbox{\ensuremath{\displaystyle #2}}}}
\newenvironment{proof*}[1][\proofname]{\proof[#1]}{\endproof}
\begin{document}

	\author{{\sc Akshay Bhuva, Surajit Biswas, Bipul Saurabh}}
	\date{}
	\title{ Topological invariance of quantum homogeneous spaces of type $B$ and $D$ }
	\maketitle
	\begin{abstract}

		In this article,  we study two families of quantum homogeneous spaces, namely,  $SO_q(2n+1)/SO_q(2n-1)$, and $SO_q(2n)/SO_q(2n-2)$. By applying a two-step Zhelobenko branching rule, we show that the $C^*$-algebras $C(SO_q(2n+1)/SO_q(2n-1))$, and $C(SO_q(2n)/SO_q(2n-2))$ are generated by the entries of the first and the last rows of the fundamental matrix of the quantum groups $SO_q(2n+1)$, and $SO_q(2n)$, respectively.  We then construct  a chain of short exact sequences, and using that, we  compute $K$-groups of these spaces with explicit generators. 
		Invoking  homogeneous $C^*$-extension theory, we show $q$-independence of some  intermediate $C^*$-algebras arising as the middle $C^*$-algebra of these  short exact sequences. As a consequence, we get the $q$-invariance of $SO_q(3)$, $SO_q(5)/SO_q(3)$,  $SO_q(4)/SO_q(2)$, and $SO_q(6)/SO_q(4)$.

	\end{abstract}

	{\bf AMS Subject Classification No.:} {\large 58}B{\large 34}, {\large
		46}L{\large 80}, {\large
		19}K{\large 33}\\
	{\bf Keywords.}  Homogeneous extension, $m$-torsioned quantum double suspension, corona algebra.

	\section{Introduction}
	
	Let $G$ be a semisimple compact  Lie group with complexified Lie algebra $\mathfrak{g}$.  Fix $ 0<q<1$. 
	The algebra of functions $C(G_q)$ on its  $q$-deformation $G_q$ is defined as the  
	enveloping $C^*$-algebra of the Hopf $*$-algebra generated by 
	matrix coefficients of all finite-dimensional representations of Quantized universal enveloping algebra (QUEA)  $U_q(\mathfrak{g})$.  It turns out that if $H$ is a closed  Poisson Lie subgroup of $G$, then  $H_q$ is a quantum subgroup of $G_q$ and the $C^*$-algebra 
	$C(G_q/H_q)$ underlying the  quotient space $G_q/H_q$
	is a $C^*$-subalgebra of $C(G_q)$ generated by matrix elements of certain finite dimensional representations
	of $U_q(\mathfrak{g})$.   One of the main objectives in noncommutative geometry (NCG) is to see how the theory of quantum groups and their quotient spaces fits under Connes formulation of NCG (see \cite{Con-1994aa} for details).  To achieve this, it is crucial to examine the $C^*$-algebra underlying these spaces.  The direct approach of exploring the operators obtained as images of the generators of $C(G_q/H_q)$ under a faithful representation, as mentioned in (\cite{KorSoi-1998aa}, \cite{NesTus-2012ab}), seems to be complicated. Other possible approaches can be to see whether the given $C^*$- algebra can be associated with a graph, groupoid, or semigroup or it can be obtained by applying some noncommutative operations on  a classical space. Many articles have investigated this (see \cite{HonSzy-2002aa}, \cite{She-1997ac}). However, most discuss $SU_q(n)$ and its homogeneous spaces. The $C^*$-algebra of quotient spaces of other types remains less explored.   To give a glimpse of the situation, the very first question of proving whether $C(SO_q(\mathcal{N})/SO_q(\mathcal{N}-2))$ is given by a finite set of generators and relations  has yet to be answered. Moreover, since $SO(n)$ is not simply connected for $n \geq 3$, one cannot simply apply the general theory developed in \cite{NesTus-2012ab}, which  adds another layer of difficulty to the situation.  Lance (\cite{Lan-1998aa}) and Podles (\cite{Pod-1995aa}) have done some explicit computations, but  only for $C(SO_q(3))$.  Recent works  (see \cite{Sau-2017aa}, \cite{Sau-2019aa}, \cite{ChaSau-2018ab}, \cite{ChaSau-2019aa},  \cite{BhuSau-2023aa}) have made some progress but have only scratched the surface of these challenges.  Therefore, it is worthwhile to investigate such spaces. 
	In this article, we take up certain homogeneous spaces of quantized special orthogonal groups and explore their topological properties using the tools of extension theory. 
	
	$C^*$-extension theory has its origin in the work of 
	Brown, Douglas, and Fillmore (\cite{BroDouFil-1977aa}), where the authors  classified all essentially normal operators  acting on an infinite dimensional separable Hilbert space  with essential spectrum $X$ up to essentially unitarily equivalence by proving that such an operator $A$ can be identified, precisely, by the set of indices  of the Fredholm operators $A-\lambda I$, where $I$ is the identity operator and $\lambda \in \bbc \setminus X$. Later they converted this classification problem to the classification of all essential extension of $C(X)$ by compact operators as any essential normal operator with essential spectrum $X$ gives rise to  an essential extension 
	\[
	0 \rightarrow \clk \rightarrow C^*(\{N,\clk\}) \rightarrow C(X) \rightarrow 0.
	\] 
	Kasparov (\cite{Kas-1979aa})  extended this concept by considering a group   $Ext(A,B)$  of stable unitary equivalence classes of essential   $C^*$-algebra extensions of $A$ by $B \otimes \clk$, where $A$ is nuclear and separable, and $B$ is separable.  
	However, the group  $Ext(A,B)$  remains, in general,  silent regarding any information  about unitary equivalence classes of such extensions, and therefore, two elements in the same class 
	may have non-isomorphic middle $C^*$-algebras.
	To overcome this issue, Pimsner, Popa and Voiculescu~(\cite{PimPopVoi-1979aa}) constructed another group $\mathrm{Ext}_{\mathrm{PPV}}(Y, A)$ for a nuclear $C^*$-algebra $A$ and a
	finite dimensional compact metric space $Y$,  
	consisting of strong unitary equivalence classes of unital homogeneous 
	extensions of $A$ by $C(Y) \otimes \clk$.

	We say that the $C^*$-algebra of a quotient space $G_q/H_q$  is  $q$-invariant  if for different values of $q \in (0,1)$,  $C(G_q/H_q)$ are isomorphic.  
	In (\cite{HonSzy-2002aa}),  Hong and Szymanski showed that the odd dimensional quantum sphere $C(S_q^{2n+1})=C(SU_q(n+1)/SU_q(n))$ can be obtained by applying  quantum double suspension (QDS)  operation to  $C(\bbbt)$ iteratively, and  as a result, one gets $q$-invariance of $S_q^{2n+1}$. 
	Chakraborty and Sundar \cite{ChaSun-2011aa} exploited this fact to construct finite summable regular nontrivial spectral triples on $C(S_q^{2n+1})$.  	Lance (\cite{Lan-1998aa}) proved $q$-invariance of $SO_q(3)$, which,  to the best of our understanding,   has a flaw.  The author  correctly established the following short exact sequence of $C^*$-algebras:
	\[
	\xi: 0 \rightarrow C(\bbbt)\otimes \clk \rightarrow C(SO_q(3)) \rightarrow C(\bbbt) \rightarrow 0.
	\]
	
	This extension can be equivalently  described in terms of its Busby invariant $\beta: C(\bbbt) \rightarrow Q(C(\bbbt) \otimes \clk)$. However, the author argues that $Q(C(\bbbt) \otimes \clk)$ is just $Q(\clk)$ again, so the extension is still specified up to 
	strong equivalence by an index, 
	which is not true. One way to see this is by the Kunneth theorem, it follows that  the group $\mbox{Ext}(C(\bbbt), C(\bbbt))=KK^1(C(\bbbt), C(\bbbt))$ is isomorphic to $\bbz^2$, not $\bbz$, and hence all extensions can't be distinguished by an integer as claimed in \cite{Lan-1998aa}.  Though flawed, the argument clearly suggests the need for a group, which is based on  unitary equivalence classes of essential extensions.  In (\cite{Sau-2019aa}), Saurabh  used such a  group, namely,  $\mathrm{Ext}_{\mathrm{PPV}}(\bbbt,C(S_0^{2\ell+1}))$ group, showed that $K_0$-group is the complete invariant of the middle $C^*$-algebras of  extensions in the group, and thus,  proved $q$-invariance of quantum quaternion spheres.    Recently,  Giselsson (\cite{Gis-2020aa}) proved $q$-invariance of $G_q$ for any connected semi-simple compact Lie group $G$. However, for quotient spaces, the problem  remains open. 
	
	Let us explain the main findings of the paper and its organization. 
	In Sect.$~2$, we begin with an overview of the quantum group $SO_q(\mathcal{N})$ using FRT approach as given in \cite{KliSch-1997aa}.  We discuss irreducible representations of $C(SO_q(\mathcal{N}))$ using the dual pairing between  $U_q(\mathfrak{so}_\mathcal{N})$ and $\mathcal{O}(SO_q(\mathcal{N}))$.   In Sect.$~3$, we recall the $U_q(\mathfrak{so}_\mathcal{N})$-module structure on the  quotient space $SO_q(\mathcal{N})/SO_q(\mathcal{N}-2)$. Applying  the Zhelobenko branching rule in two steps, we  obtain the multiplicity of each co-representation of $SO_q(\mathcal{N})$ occurring in $C(SO_q(\mathcal{N})/SO_q(\mathcal{N}-2))$. Further, we explicitly produce a basis of the subspace spanned by all spectral spaces corresponding to each co-representation. Using that, we establish that the quotient space  $C(SO_q(\mathcal{N})/SO_q(\mathcal{N}-2))$ is generated by the matrix entries of the first and the last row of the generating fundamental matrix of $SO_q(\mathcal{N})$.	 Furthermore, we explicitly list all of its irreducible representations and obtain its faithful representation.   In the next section, we computed the $K$-groups with explicit generators. It is worth mentioning that the $K$-groups of $C(SO_q(\mathcal{N})/SO_q(\mathcal{N}-2))$ are known, thanks to the $KK$-equivalence with its classical counterpart (see \cite{NesTus-2012ab}).  However, here, we obtain their generators explicitly, which could be helpful in many situations, for example, in finding $K$-theory-$K$-homology pairing through index computation.  In Sect. 5, we define $m$-torsioned quantum double suspension of a unital $C^*$-algebra, obtain its representation theory, and $K$-groups.   In Sect. 6,  we show $q$-invariance of some homogeneous spaces of type $B$ and $D$. First,  using  the description of elements of $\mathrm{Ext}_{\mathrm{PPV}}(\bbbt,C(\bbbt))$,  we show that  $C(SO_q(3))$ is isomorphic to $2$-torsioned quantum double suspension of $C(\bbbt)$. This proves  $q$-invariance of $C(SO_q(3))$, and  rectify Lance's argument \cite{Lan-1998aa}.  Using continuous functional calculus, we show that $C(SO_q(4)/SO_q(2))=C(SO_0(4)/SO_0(2))$, where the latter $C^*$-algebra is defined as the $C^*$-algebra generated by the operators obtained as the limit of the standard  generators $C(SO_q(4)/SO_q(2))$ as $q$ approaches to $0$. 
	In both cases, we obtain a $C^*$-algebra at $q=0$, which can be thought of as the crystal limit of such spaces and it would be interesting to see whether it is the same $C^*$-algebra as obtained in \cite{MatYun-2023aa}. 
	Finally, using homogeneous $C^*$-extension theory, and a homotopy argument,  we prove $q$-independence of the $C^*$-algebras $C(SO_q(5)/SO_q(3))$, and $C(SO_q(6)/SO_q(4))$. 
	All these results will likely shed further insight into the theory of crystallization of $C^*$-algebras 
	(see \cite{MatYun-2023aa},\cite{GirPal-2023aa}).

	\textbf{Notations:}
	Let $\mathbb{T}$ denote the set of complex numbers whose modulus is $1$, and let $q$ be a real number lying in the interval $(0,1)$. Let $\mathbb{N}_{0}=\mathbb{N}\cup \{0\}$. The standard bases of the Hilbert spaces $\ell^2(\mathbb{N}_0)$ and
	$\ell^2(\mathbb{Z})$ will be denoted by $\{e_n : n \in \mathbb{N}_0\}$ and $\{e_n : n \in \mathbb{Z}\}$ respectively. The length of a Weyl word $w$ is denoted by $\ell(w)$. The number operator $e_n\mapsto ne_n$ is denoted by $N$. The operator $S$ denotes the left shift  $e_n\mapsto e_{n-1}$. The $C^{\ast}$-subalgebra of $\cll(\ell^2(\mathbb{N}_0))$ generated by $S$ is denoted by $\scrt$. 
	Let $\sigma: \scrt\longrightarrow\mathbb{C}$
	be the homomorphism for which $\sigma(S)=1$. The multiplier algebra and the corona algebra of a $C^*$-algebra $A$ is denoted  by $M(A)$ and   $Q(A)$, respectively.  For a compact Hausdorff space $X$,   the symbol $M(X)$ and $Q(X)$ represent the $C^*$-algebras $M(C(X)\otimes \clk)$ and $Q(C(X)\otimes \clk)$, respectively.   The greatest integer less or equal to $x$ is written as $\left\lfloor x\right\rfloor$.  For $i,j\in \mathbb{N}_0$, the operator $p_{ij}$ denotes the operator on $\ell^2(\mathbb{N}_0)$ which sends the basis element $e_i$ to $e_j$. Throughout the paper, the symbol $\mathcal{N}$ denotes the numbers $2n+1$ and $2n$ for type $B_n$ and $D_n$, respectively.

	\section{Preliminaries} 
	We begin by recalling some key aspects of the compact quantum group $C(SO_q(\mathcal{N}))$ as detailed in \cite[Chapter 9]{KliSch-1997aa}.
	\subsection{The $C^*$-algebra $C(SO_q(\mathcal{N}))$}
	In this subsection, we provide a brief overview of the Hopf algebra structure associated with the compact quantum group $SO_q(\mathcal{N})$ as introduced in \cite[Section 9.3.3]{KliSch-1997aa}. We begin by introducing key notations. Let $\mathcal{N}\geq 3$, and for $1\leq i,j,m,n\leq \mathcal{N}$ we define
	\begin{IEEEeqnarray}{rCl} 
		&i'=& \mathcal{N}+1-i,
		~~\rho_{i}  =  \mathcal{N}/2-i \text{ if } i < i',
		~~\rho_{i'} = -\rho_i \text{ if } i \leq i',
		~~C_j^i =  \delta_{ij'} q^{-\rho_i},\nonumber\\
		&R_{mn}^{ij}=&\begin{cases}
			(q-q^{-1})(\delta_{jm}\delta_{in}- C_{i}^{j}C_{n}^{m}) & \mbox{ if } i>m, \cr
			q^{\delta_{ij}-\delta_{ij^{'}}}\delta_{im}\delta_{jn}  & \mbox{ if } i\leq m, \cr
		\end{cases},\nonumber
	\end{IEEEeqnarray}
	where $\delta_{ij}$ represents the Kronecker delta function. Let $A(R)$ be the unital associative algebra generated by $v_j^i$, $i,j=1,2,\ldots, \mathcal{N}$, subject to the following relations:
	\begin{IEEEeqnarray}{rCl} \label{Relation 2.1}
		\sum_{k,l=1}^{\mathcal{N}}R_{kl}^{ji}v_{s}^{k}v_{t}^{l}-R_{st}^{lk}v_{k}^{i}v_{l}^{j}=0,
		\quad i,j,s,t=1,2,\ldots , \mathcal{N}.
	\end{IEEEeqnarray}
	The matrices $(\!(v_j^i)\!)$ and $(\!(C_j^i)\!)$ are denoted as $V$ and $C$, respectively. Define $J$ as the two-sided ideal of $A(R)$ generated by the entries of the matrices $VCV^t C^{-1}-I$ and $CV^t C^{-1}V-I$. Let $\mathcal{O}\left(O_q(\mathcal{N})\right)$ denotes the quotient algebra $A(R)/J$. The Hopf $*$-algebra structure on $\mathcal{O}(O_{q}(\mathcal{N}))$ comes from the following maps.
	\begin{IEEEeqnarray*}{lll}
		&\bullet \, \text{Comultiplication}
		:\Delta(v_{l}^{k}) = \sum_{i=1}^{\mathcal{N}}v_{i}^{k} \otimes v_{l}^{i}, \quad \quad & \bullet \, \text{Counit}:
		\epsilon(v_{l}^{k}) = \delta_{kl}, \\
		& \bullet\, \text{Antipode}:
		S(v_l^k) = q^{\rho_k-\rho_l}v_{l'}^{k'}, &\bullet \, \text{Involution}:
		(v_{l}^{k})^* = q^{\rho_k - \rho_l}v_{l'}^{k'}.
	\end{IEEEeqnarray*}
	
	Let $\mathcal{D}_q$ be the quantum determinant of the matrix $V$ for the quantum group $O_q(\mathcal{N})$ \cite[Chapter 9, Definition 10]{KliSch-1997aa}. Denote the quotient $\mathcal{O}(O_q(\mathcal{N}))/\langle \mathcal{D}_q - 1\rangle$ of $\mathcal{O}(O_q(\mathcal{N}))$ by the two-sided ideal $\langle\mathcal{D}_q - 1\rangle$ as $\mathcal{O}(SO_q(\mathcal{N}))$. The Hopf $*$-algebra structure on $\mathcal{O}(SO_q(\mathcal{N}))$ is induced from $\mathcal{O}(O_q(\mathcal{N}))$. In $\mathcal{O}\left(SO_q(\mathcal{N})\right)$, the relation $V^*=CV^t C^{-1}$ leads to the following:
	\begin{equation}\label{Relation 2.2}
		VV^* = V^*V = I.
	\end{equation}
	
	The algebra $\mathcal{O}\left(SO_q(\mathcal{N})\right)$ becomes a normed $*$-algebra with the norm defined by
	$$\|a\|=\sup\left\{\|\pi(a)\|: \pi \text{ is a representation of } \mathcal{O}\left(SO_q(\mathcal{N})\right)\right\} \text{ for } a\in \mathcal{O}(SO_q(\mathcal{N})).$$

	Using relation (\ref{Relation 2.2}), we conclude that $\left\|v_j^i\right\|\leq 1$. Therefore,  for all $a\in\mathcal{O}\left(SO_q(\mathcal{N})\right)$, we get $\|a\| < \infty$. We denote the completion of $\mathcal{O}\left(SO_q(\mathcal{N})\right)$ as $C\left(SO_q(\mathcal{N})\right)$. The pair $\left(C\left(SO_q(\mathcal{N})\right),\Delta\right)$ forms a compact quantum group known as a $q$-deformation of the group $SO(\mathcal{N})$. The quantum group $SO_q(2)$ is defined as the circle group $\mathbb{T}$. For details, please see \cite[Section 9.3.3]{KliSch-1997aa}.
	\subsection{Dual pairing between Hopf algebras}
	In this subsection, we recall from \cite{KliSch-1997aa} the dual pairing between $U_{q}(\mathfrak{g})$ and $\mathcal{O}(G_q)$.
	
	\bthm{\rm{\cite{KliSch-1997aa}}}\label{pair}
	For $\mathcal{N}\geq 3$, 
	there exists a unique nondegenerate dual pairing between $U_{q^{1/2}}(\mathfrak{so}_{2n+1})$  and $\mathcal{O}(SO_q(2n+1))$, $U_{q}(\mathfrak{so}_{2n})$  and $\mathcal{O}(SO_q(2n))$,  
	such that
	\begin{align}\label{eqpair}
		\left\langle f,v_j^i\right\rangle=t_{ij}(f), \quad i,j=1,2,\hdots \mathcal{N},
	\end{align}
	where $t_{ij}(f)$ be the matrix entries of $T_1(f)$, and $T_1$ is the vector representation of $U_{q^{1/2}}(\mathfrak{so}_{2n+1})$ and $U_{q}(\mathfrak{so}_{2n})$,  respectively. Moreover, there is a nondeenerate pairing  between $U_{q}(\mathfrak{sl}_2)$ and $\mathcal{O}\left(SL_q(2)\right)$ given by 
	\begin{align}\label{eqpair 1}
		\left\langle f,v_j^i\right\rangle=t_{ij}(f), \quad i,j=1,2,
	\end{align}
	where $t_{ij}(f)$ be the matrix entries of $T_1(f)$, and $T_1$ is the vector representation of $U_{q}(\mathfrak{sl}_2)$. 
	\ethm

	We will explicitly describe $T_1$ for these three cases. Let $E_i,~ F_i, ~K_i$
	and $K_i^{-1}$ be generators of  $U_{q^{1/2}}(\mathfrak{so}_{2n+1})$, $U_{q}(\mathfrak{so}_{2n})$, and $U_{q}(\mathfrak{sl}_2)$, where $i=1,\hdots,n$. For more details, please refer to \cite[Section 6.1.2]{KliSch-1997aa}.
	Define $I_{i,j}$ as a $\mathcal{N} \times \mathcal{N}$ 
	matrix with $1$ in the $(i,j)^{th}$ position and $0$ elsewhere, and $D_{j}$ as a diagonal matrix with $q$ in the $(j,j)^{th}$ 
	position and $1$ elsewhere on the diagonal.
	\begin{itemize}
		\item 
		For the QUEA $U_{q^{1/2}}(\mathfrak{so}_{2n+1})$, we have
		
		\[ \left.
		\begin{array}{rcl}
			T_1(K_i) &=& D_i^{-1}D_{i+1}D_{2n-i+1}^{-1}D_{2n-i+2}, \nonumber\\
			T_1(E_i) &=& I_{i+1,i}-I_{2n-i+2,2n-i+1},\nonumber\\
			T_1(F_i) &=& I_{i,i+1}-I_{2n-i+1,2n-i+2},\nonumber
		\end{array} \right\}\quad  \mbox{for} \quad i \in \{1,2,\hdots,n-1\},
		\]
		and for $i=n$,
		\[ 
		T_1(K_n) = D_n^{-1}D_{n+2},~~
		T_1(E_n) = c(I_{n+1,n}-q^{1/2}I_{n+2,n+1}),~~
		T_1(F_n) = c(I_{n,n+1}-q^{-1/2}I_{n+1,n+2}),
		\]
		where $c=(q^{1/2}+q^{-1/2})^{1/2}$.
		\item 
		For the QUEA $U_{q}(\mathfrak{so}_{2n})$, one has
		\[ \left.
		\begin{array}{rcl}
			T_1(K_i) &=& D_i^{-1}D_{i+1}D_{2n-1}^{-1}D_{2n-i+1}. \nonumber\\
			T_1(E_i) &=& I_{i+1,i}-I_{2n-i+1,2n-i}.\nonumber\\
			T_1(F_i) &=& I_{i,i+1}-I_{2n-i,2n-i+1}.\nonumber
		\end{array} \right\}\quad  \mbox{for} \quad i \in \{1,2,\hdots,n-1\}
		\]
		and for $i=n$, 
		\[ 
		\begin{array}{rcl}
			T_1(K_n) = D_{n-1}^{-1}D_n^{-1}D_{n+1}D_{n+2}, 
			~T_1(E_n) = -I_{n+2,n}+I_{n+1,n-1}, 
			~T_1(F_n) = -I_{n,n+2}+I_{n-1,n+1}. \nonumber
		\end{array} 
		\]
		\item For $U_{q}(\mathfrak{sl}_{2})$, we have
		\begin{IEEEeqnarray}{rCl}
			T_1(K) =   \left({\begin{matrix}
					q^{-1} & 0\\
					0& q\\
			\end{matrix} } \right),\qquad
			T_1(E) =  \left( {\begin{matrix}
					0 & 0\\
					1& 0\\
			\end{matrix} } \right), \qquad
			T_1(F) =  \left( {\begin{matrix}
					0 & 1\\
					0& 0\\
			\end{matrix} } \right).\nonumber
		\end{IEEEeqnarray}
	\end{itemize}	 
	
	We will utilize these pairings to write down irreducible representations of $\mathcal{O}(\text{SO}_q (\mathcal{N}))$, which can be extended to $C(\text{SO}_q (\mathcal{N}))$ to obtain elementary representations of $C(\text{SO}_q (\mathcal{N}))$.
	\subsection{Irreducible Representation of $C(SO_{q}(\mathcal{N}))$}\label{IrRe}
	Let $\Pi$ denote the set  $\{\alpha_1, \alpha_2, \ldots, \alpha_n\}$ consisting of simple roots of $\mathfrak{so}_{\mathcal{N}}$. To keep the notations simple, we denote the root $\alpha_i$ as $i$ and the reflection $s_{\alpha_i}$ defined by the root $\alpha_i$ as $s_i$. The Weyl group $W_n$ of $\mathfrak{so}_{\mathcal{N}}$ can be represented as the group generated by the reflections $\{s_i : 1 \leq i \leq n\}$.\\

	\noindent	\textbf{Elementary representation of} $C(SO_{q}(\mathcal{N}))$: For $1\leq i \leq n$, let $d_i=\left\langle \alpha_i, \alpha_i\right\rangle$ and $q_i=q^{d_i}$. Let $K$, $E$, and $F$ be the standard generators of $U_{q_i}(\mathfrak{sl}_2)$. Let  $\Psi_i : U_{q_i}(\mathfrak{sl}_2) \longrightarrow U_{q}(\mathfrak{so}_\mathcal{N})$ be a homomorphism given on generators by,

	$$\Psi_i(K)=K_i,~~\Psi_i(E)=E_i,~~\Psi_i(F)=F_i .$$
	By duality, there exist a surjective homomorphism
	\begin{displaymath}
		\Psi_i^* :C(SO_{q}(\mathcal{N})) \longrightarrow C(SU_{q_i}(2))
	\end{displaymath}
	given by
	\begin{displaymath}
		\left\langle f, \Psi
		_i^*(v_l^k)\right\rangle = \left\langle \Psi_i(f), v_l^k\right\rangle,
	\end{displaymath}
	where $\left\langle\cdot,\cdot\right\rangle$ given by equation ($\ref{eqpair}$).  Consider the matrix $\left( \begin{smallmatrix}
		u_1^1 & u_2^1\\
		u_1^2 & u_2^2
	\end{smallmatrix}\right)$  whose entries are generators of $\mathcal{O}(SU_{q}(2))$. Let $\pi$ represent the following representation of $C(SU_{q}(2))$ on $\ell^2(\mathbb{N}_0)$:\\
	\[
	\pi(u_l^k)=\begin{cases}
		\sqrt{1-q^{2N+2}}S & \mbox{ if } k=l=1,\cr
		S^{*}\sqrt{1-q^{2N+2}} & \mbox{ if } k=l=2,\cr
		-q^{N+1} & \mbox{ if } k=1,~l=2,\cr
		q^N & \mbox{ if } k=2,~l=1.\cr           \end{cases}
	\] 
	For each $i=1,2,\hdots,n$, define a map $\pi_{s_{i}} = \pi \circ \Psi_{i}^{*}$ of   
	$C(SO_{q}(\mathcal{N}))$. Each $\pi_{s_{i}}$ is an irreducible elementary representation of     
	$C(SO_{q}(\mathcal{N})).$  For $t=(t_{1},t_{2},\hdots ,t_{n}) \in \bbbt^{n}$, define the one dimensional representation  $\tau_t: C(SO_{q}(\mathcal{N})) \longrightarrow \bbc $ by
	\[
	\tau_{t}(v_j^i)=\begin{cases}
		\overline{t_{i}}\delta_{ij} & \mbox{ if } i \leq n,\cr
		t_{\mathcal{N}+1-i}\delta_{ij} & \mbox{ if } i > n.\cr
	\end{cases}
	\]
	For any two representations $\phi_1$ and $\phi_2$ of $C(SO_q (\mathcal{N}))$, define a representation $\phi_1 \ast \phi_2 :=(\phi_1 \otimes \phi_2)\circ\Delta$. For $w\in W_n$ such that $s_{i_1}s_{i_2}\cdots s_{i_n}$ is a reduced form for $w$ and $t\in\mathbb{T}^n$, we define a representation $\pi_{t,w}$ by $\tau_t\ast\pi_{s_{i_1}}\ast\pi_{s_{i_2}}\ast\cdots\ast\pi_{s_{i_n}}$. When $t=1$, we denote the representation $\pi_{t,w}$ by $\pi_{w}$.

	\begin{thm}\cite{KorSoi-1998aa}\label{1.2}
		Let $t\in\mathbb{T}^n$ and $w\in W_n$. Then the representation $\pi_{t,w}$ of $C(SO_q(\mathcal{N}))$ is irreducible. Moreover, two representations, $\pi_{t_1,w_1}$ and $\pi_{t_2,w_2}$,  are equivalent if and only if $t_1=t_2$ and $w_1=w_2$.
	\end{thm}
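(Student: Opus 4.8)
The plan is to establish irreducibility first and then the equivalence criterion, following the standard Soibelman-type analysis adapted to the FRT presentation of $SO_q(N)$. For irreducibility, I would first record what each building block does: $\tau_t$ is a one-dimensional (hence irreducible) representation, and each elementary representation $\pi_{s_i} = \pi \circ \Psi_i^*$ is irreducible on $\ell^2(\mathbb{N}_0)$ because $\pi$ is the standard irreducible GNS-type representation of $C(SU_{q_i}(2))$ and $\Psi_i^*$ is surjective. The representation $\pi_{t,w}$ acts on $\ell^2(\mathbb{N}_0)^{\otimes \ell(w)}$. The key structural fact I would exploit is that $\pi_{t,w}$ factors as $(\tau_t \otimes \id)\circ(\text{something})$ composed with $\pi_{s_{i_1}} \ast \cdots \ast \pi_{s_{i_n}}$, so one reduces to analyzing $\pi_w = \pi_{s_{i_1}} \ast \cdots \ast \pi_{s_{i_n}}$ and then twisting by the torus. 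For $\pi_w$ I would identify, inside the image, the compact operators: the point is that under $\pi_{s_i}$ the generators $v_j^i$ of the first/last rows map to operators of the form $\sqrt{1-q^{2N+2}}\,S$, $q^N$, etc., and the ideal generated by $\one - SS^*$ (a rank-one projection) is all of $\clk$; pulling this through the iterated comultiplication shows $\clk^{\otimes \ell(w)}$ lies in the image of the $C^*$-algebra generated by $\pi_w$. Restricting to this ideal, irreducibility of $\pi_w$ follows because the quotient representation on $\ell^2(\mathbb{N}_0)^{\otimes \ell(w)}/$(invariant subspaces) is controlled by the characters $\sigma^{\otimes k}$, and an inductive argument on $\ell(w)$ using that $\pi_{s_i}$ already generates $\clk$ in the last tensor leg closes the loop. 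The twist by $\tau_t$ does not affect irreducibility since it only multiplies generators by scalars of modulus one.

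For the equivalence part, the ``if'' direction is trivial. For ``only if'', suppose $\pi_{t_1,w_1} \cong \pi_{t_2,w_2}$. I would first extract $w_1 = w_2$ by a dimension/ideal invariant: the length $\ell(w)$ is recovered as the largest $k$ such that $\clk^{\otimes k}$ embeds equivariantly in the image, equivalently by looking at the structure of the largest essential ideal, or more concretely by the ``rank of the commutator ideal'' type invariant that distinguishes $\ell^2(\mathbb{N}_0)^{\otimes k}$ for different $k$. Once $\ell(w_1) = \ell(w_2)$, to pin down $w$ itself I would use the composition with the characters: restricting $\pi_{t,w}$ to the abelianization (setting $S \mapsto 1$ in chosen tensor legs) produces a family of one-dimensional representations indexed by the subwords of $w$, and the set of these characters determines the reduced word $w$ uniquely (this is where the combinatorics of the Weyl group $W_n$ of type $B_n$/$D_n$ enters, via the fact that a reduced expression is determined by the set of inversions / the associated chamber). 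Finally, having $w_1 = w_2 =: w$, the torus parameter is read off from the one-dimensional sub-quotient $\tau_t \ast \sigma \ast \cdots \ast \sigma$, which equals $\tau_{t}$ composed with a fixed character; comparing these forces $t_1 = t_2$.

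The main obstacle I anticipate is the step that recovers the \emph{reduced word} $w$ (not just its length) from the unitary equivalence class — i.e., showing the assignment $w \mapsto [\pi_w]$ is injective on $W_n$. This requires a clean invariant: either (a) a careful bookkeeping of which ``corner'' characters $\sigma^{\otimes S}$ appear when one composes $\pi_w$ with partial abelianizations, matched against the known description of reduced expressions in Weyl groups of type $B$ and $D$; or (b) invoking the general Soibelman-type classification for $C(G_q)$ (Theorem~\ref{1.2} is attributed to \cite{KorSoi-1998aa}, so in fact the cleanest route is simply to cite that the full classification of irreducibles of $C(SO_q(N))$ is already established there, and restrict it). If a self-contained argument is wanted, the delicate point is verifying that distinct reduced words give inequivalent representations even when they represent Weyl group elements of the same length — here one leans on the fact that the ``Soibelman torus'' action and the associated system of characters separate the reduced expressions, exactly as in the $SU_q(n)$ case but now with the $B_n$/$D_n$ root combinatorics; this is routine in spirit but notation-heavy. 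A secondary, more technical obstacle is checking that $\clk^{\otimes \ell(w)}$ genuinely sits inside the image of $C^*(\pi_w)$ for type $B$ and $D$, since the explicit form of $T_1$ at the node $i=n$ differs from the generic node (note the factor $c = (q^{1/2}+q^{-1/2})^{1/2}$ and the asymmetry in the $D$-case short root), so one must confirm the relevant matrix coefficient still produces a Toeplitz-type generator after applying $\Psi_n^*$.
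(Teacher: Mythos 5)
The paper does not actually prove this statement: Theorem \ref{1.2} is quoted from Korogodski--Soibelman \cite{KorSoi-1998aa} with no argument given, so the ``cleanest route'' you identify as option (b) --- citing the classification of irreducible representations of $C(G_q)$ established there and specializing to $SO_q(N)$ --- is precisely what the paper does, and in that sense your proposal is consistent with the paper.

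Concerning your self-contained sketch: the broad strategy (elementary representations are irreducible; the image of $\pi_w$ contains $\clk(\ell^2(\bbn_0)^{\otimes \ell(w)})$, whence irreducibility; twisting by $\tau_t$ is harmless) is the standard Soibelman-type argument and is fine in outline. However, the step you yourself flag as the main obstacle is genuinely broken as written: recovering $\ell(w)$ as ``the largest $k$ such that $\clk^{\otimes k}$ embeds in the image'' is not a well-defined invariant, since $\clk^{\otimes k}\cong\clk$ for every $k\ge 1$ and a unitary equivalence class of representations carries no preferred tensor decomposition of the underlying Hilbert space; the hedges (``largest essential ideal'', ``rank of the commutator ideal'') do not repair this. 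The argument in the cited literature does not recover the length separately at all: one distinguishes the representations by their kernels, using that a suitable matrix coefficient attached to $v\in W_n$ is nonzero under $\pi_{t,w}$ exactly when $v\le w$ in the Bruhat order, so the kernel already determines $w$; the torus parameter $t$ is then read off from the one-dimensional quotients obtained by composing with $\sigma$ on all Toeplitz legs, which is essentially your final step. Note also that well-definedness of $\pi_{t,w}$ --- independence, up to equivalence, of the chosen reduced expression for $w$ --- is part of what is being cited and is not addressed in your sketch.
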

	Also, we have a homomorphism  $\chi_{w}:C(SO_q(\mathcal{N}))\longrightarrow C(\mathbb{T}^n)\otimes \scrt^{\otimes \ell(w)}$ such that $$\chi(w) (a)(t) = \pi_{t,w }(a), \mbox{ for all } a\in C(SO_q(\mathcal{N})).$$
	
	\bthm\label{1.3}
	Let $v$ be the longest element in the Weyl group $W_n$. Then the homomorphism $\chi_{v}:C(SO_q(\mathcal{N}))\longrightarrow C(\mathbb{T}^n)\otimes \scrt^{\otimes \ell(\vartheta)}$ is  faithful.
	\ethm
	
	We omit  proof of the above Theorem.
	\newsection{The quotient space $C(SO_{q}(\mathcal{N})/SO_{q}(\mathcal{N}-2))$}
	In this section, our aim is to prove that the $C^*$-algebra $C(SO_{q}(\mathcal{N})/SO_{q}(\mathcal{N}-2))$ is the $C^*$-subalgebra of $C(SO_q(\mathcal{N}))$ generated by the elements $\left\{v_m^1, v_m^{\mathcal{N}} :
	m \in \{1,2,\ldots,\mathcal{N}\}\right\}$.

	Let $\vartheta_n$ denote the longest word in the Weyl group $W_n$. We realize $W_{n-1}$, the Weyl group of $\mathfrak{so}_{\mathcal{N}-2}$, as a subgroup of $W_n$ generated by simple reflections $s_2, s_3, \ldots, s_n$. Also, noting that the longest word $\vartheta_{n-1}$ in $W_{n-1}$ is a subword of $\vartheta_n$. 
	We define a mapping $\eta_\mathcal{N}:~ C(SO_{q}(\mathcal{N}))\rightarrow C(SO_{q}(\mathcal{N}-2))$ as follows:
	\begin{align*}	
		&\eta_{\mathcal{N}}(v_{j}^{i})=\begin{cases}
			u_{j-1}^{i-1}, 
			& \mbox{ if } i,j \notin \{1,\mathcal{N}\}, \cr
			\delta_{ij}, & \mbox{ otherwise } \cr
		\end{cases}
	\end{align*}
	where $u_{j}^{i}$ are generators of $C(SO_{q}(\mathcal{N}-2))$, and $\Delta\eta_\mathcal{N} = (\eta_\mathcal{N}\otimes \eta_\mathcal{N})\Delta$. To show that $\eta_\mathcal{N}$ is a $C^{*}$-epimorphism, we'll proceed with odd and even cases.
	
	\begin{itemize}
		\item Case I: Consider $\mathcal{N}=2n+1$.\\
		We view $\chi_{\vartheta_n}(C(SO_{q}(2n+1)))$ as a $C^{\ast}$-subalgebra of $C(\mathbb{T}^{n})\otimes \scrt^{\otimes n^{2}}$. Let $\phi_{2n+1}:=1^{\otimes n-1}\otimes \mathrm{ev}_{1}\otimes 1^{\otimes (n-1)^{2}}\otimes \sigma^{\otimes (2n-1)}$ be the homomorphism from $C(\mathbb{T}^{n})\otimes \scrt^{\otimes n^{2}}$ to $\chi_{\vartheta_n}(C(SO_{q}(2n+1)))$. Restricting $\eta_{2n+1}$ to $\phi$ on $ \chi_{\vartheta_n}(C(SO_{q}(2n+1)))$, we have
		\[
		\eta_{2n+1}(\chi_{\vartheta_{n}}(v_{j}^{i}))=\begin{cases}
			\chi_{\vartheta_{n-1}}(u_{j}^{i}), 
			& \mbox{ if } i \neq 1 \mbox{ or } 2n+1, \mbox{ or } j \neq 1 \mbox{ or } 2n+1, \cr
			\delta_{ij}, & \mbox{ otherwise, } \cr
		\end{cases}
		\] 
		the image of the restriction map equal to $\chi_{\vartheta_{n-1}}(C(SO_{q}(2n-1)))$.
		
		\item Case II: Take $\mathcal{N}=2n$.\\
		Define $\eta_{2n}$ as the restriction of $\phi_{2n}:=1^{\otimes n-1}\otimes \mathrm{ev}_{1}\otimes 1^{\otimes (n^2-3n+3)}\otimes \sigma^{\otimes (2n-2)}$ to $\chi_{\vartheta_{n}}(C(SO_{q}(2n-1)))$, which is contained in $C(\mathbb{T}^{n})\otimes \scrt^{\otimes (n^{2}-n-1)}$. Note that $\vartheta_{n}$ is the longest word in the Weyl group $W_n$ associated with $\mathfrak{so}_{2n}$. 
		
		\[
		\eta_{2n}(\chi_{\vartheta_{n}}(v_{j}^{i}))=\begin{cases}
			\chi_{\vartheta_{n-1}}(u_{j}^{i}), 
			& \mbox{ if } i \neq 1 \mbox{ or } 2n, \mbox{ or } j \neq 1 \mbox{ or } 2n, \cr
			\delta_{ij}, & \mbox{ otherwise, } \cr
		\end{cases}
		\] 
		where $u_{j}^{i}$ are generators of $C(SO_{q}(2n-2))$.
	\end{itemize}

	The quotient space $C(SO_{q}(\mathcal{N})/SO_{q}(\mathcal{N}-2))$ is defined by
	\[
	C(SO_{q}(\mathcal{N})/SO_{q}(\mathcal{N}-2)) = \left\{a\in C(SO_{q}(\mathcal{N})) : (\eta_\mathcal{N}\otimes \mathrm{id})\Delta(a) = I\otimes a\right\}. 
	\]
	We have the co-multiplication action on $C(SO_{q}(\mathcal{N})/SO_{q}(\mathcal{N}-2))$ induced by the compact quantum group $C(SO_q(\mathcal{N}))$:
	\begin{IEEEeqnarray}{rCl}
		C(SO_{q}(\mathcal{N})/SO_{q}(\mathcal{N}-2)) &\longrightarrow & C(SO_{q}(\mathcal{N})/SO_{q}(\mathcal{N}-2)) \otimes C(SO_q(\mathcal{N})) \nonumber \\
		a &\longmapsto & \Delta a. \nonumber 
	\end{IEEEeqnarray}
	
	Using   \cite[Theorem 1.5]{Pod-1995aa} of Podles, we will get that
	\begin{align}\label{deomposition}
		C(SO_{q}(\mathcal{N})/SO_{q}(\mathcal{N}-2))= \overline{\oplus_{\alpha \in \widehat{SO_q(\mathcal{N})}}\oplus_{i \in I_{\alpha} }W_{\alpha,i}}, 
	\end{align}
	where $\alpha$ is a finite-dimensional irreducible co-representation $u^{\alpha}$ of $SO_q(\mathcal{N})$, $ W_{\alpha, i}$ corresponds to 
	$u^{\alpha}$ for all $i \in I_{\alpha}$, and $I_{\alpha}$ denotes the multiplicity of $u^{\alpha}$. According to \cite[Section 7]{KliSch-1997aa}, there is a one-to-one correspondence between  the finite-dimensional irreducible co-representations of $SO_q(\mathcal{N})$ and $n$-tuples of integers $\lambda^{\mathcal{N}}=(\lambda_1,\hdots,\lambda_{\left\lfloor\frac{\mathcal{N}}{2}\right\rfloor})$ satisfying the following inequalities:
	\begin{itemize}
		\item For $\mathcal{N}=2n+1$:
		\[
		\lambda_1 \geq \lambda_2 \geq \cdots \geq \lambda_{\left\lfloor\frac{\mathcal{N}}{2}\right\rfloor}\geq 0.
		\]
		\item For $\mathcal{N}=2n$:
		\[
		\lambda_1 \geq \lambda_2 \geq \cdots \geq |\lambda_{\left\lfloor\frac{\mathcal{N}}{2}\right\rfloor}|.
		\]
	\end{itemize}
	
	The $n$-tuple $\lambda^{\mathcal{N}}$ is referred to as the highest weight, and $V(\lambda^{\mathcal{N}})$ denotes the finite dimensional irreducible representation corresponding to  $\lambda^{\mathcal{N}}$. For notational convenience, we denote the multiplicity 
	$I_{V(\lambda^{\mathcal{N}})}$ by $I_{\lambda^{\mathcal{N}}}$.

	\bppsn \label{Multiplicity Calculation}
	Let $V(\lambda^{\mathcal{N}})$ be a finite-dimensional irreducible representation of $SO_q(\mathcal{N})$  corresponding to highest weight  $\lambda^{\mathcal{N}}=(\lambda_1,\hdots,\lambda_{\left\lfloor\frac{\mathcal{N}}{2}\right\rfloor})$. Then we have 
	\begin{itemize}
		\item for $\mathcal{N}>4:$	\[
		I_{\lambda^{\mathcal{N}}}= \begin{cases}
			\lambda_1-\lambda_2+1, & \quad \text{if} \quad \lambda_i=0 \quad \text{for all}\quad  i \geq 3, \\
			0, & \quad \text{otherwise}.
		\end{cases}
		\]
		\item  for $\mathcal{N}=4:$
		\[	 I_{\lambda^{\mathcal{N}}} =\lambda_1-|\lambda_2|+1.\]
	\end{itemize} 
	\eppsn 
	\begin{proof} The restriction of $V(\lambda^{\mathcal{N}})$ to   $SO_q(\mathcal{N}-2)$ is isomorphic to a direct sum of irreducible finite-dimensional representations $V(\beta^{\mathcal{N}-2})$, where $\beta^{\mathcal{N}-2} =(\beta_1,\cdots , \beta_{\left\lfloor\frac{\mathcal{N}}{2}\right\rfloor-1})$. Each finite dimensional irreducible representation $V(\beta^{\mathcal{N}-2})$  occurring  with certain multiplicities denoted as $m_{\lambda^{\mathcal{N}}}(\beta^{\mathcal{N}-2})$.   By  \cite[Theorem 1.7]{Pod-1995aa}, we have 
		$$ I_{\lambda^{\mathcal{N}}}=m_{\lambda^{\mathcal{N}}}(0).$$
		Here $0$ denote the highest weight of the trivial representation. 
		For the computation of  $m_{\lambda^{\mathcal{N}}}(0)$, we use the classical Zhelobenko  branching rule. 
		
		Let $W(\lambda^{\mathcal{N}})$ denote the irreducible finite dimensional representations of $SO(\mathcal{N})$ with highest weight $\lambda^{\mathcal{N}}$. We apply the classical branching rule in two steps.	 In the first step, we restrict irreducible finite dimensional representation $W(\lambda^{\mathcal{N}})$ to  $SO(\mathcal{N}-1)$ that decompose $W(\lambda^{\mathcal{N}})$ into disjoint sub-representations of $SO(\mathcal{N})$, denote it by $W(\eta)$, where $\eta=(\eta_1,\eta_2\hdots,\eta_{\left\lfloor\frac{\mathcal{N}}{2}\right\rfloor})$. Note that each of the sub-representation occurring only once. In the second step, we again restrict sub-representation $W(\eta)$ to $SO(\mathcal{N}-2)$ that decompose $W(\eta)$ into disjoint sub-representations of $SO(\mathcal{N}-1)$, denote it by $W(\beta)$ with multiplicity one, where $\beta=(\beta_1,\beta_2,\hdots,\beta_{\left\lfloor\frac{\mathcal{N}}{2}\right\rfloor-1})$.  Let us proceed with case by case. 
		\begin{itemize}
			\item For $\mathcal{N}=2n+1$: \quad
			$W(\lambda^{\mathcal{N}})|_{SO(\mathcal{N}-1)}=\oplus_{\lambda_1 \geq \eta_1 \geq \cdots \geq \lambda_{\left\lfloor\frac{\mathcal{N}}{2}\right\rfloor} \geq |\eta_{\left\lfloor\frac{\mathcal{N}}{2}\right\rfloor}|} W(\eta)$.
			We again restrict  $W(\eta)$ to $SO(\mathcal{N}-2)$, using branching rule, we have
			$$W(\eta)|_{SO(\mathcal{N}-2)}=\oplus_{ \eta_1 \geq \beta_1 \geq  \cdots \geq \beta_{\left\lfloor\frac{\mathcal{N}}{2}\right\rfloor-1}\geq |\eta_{\left\lfloor\frac{\mathcal{N}}{2}\right\rfloor}| }W(\beta^{\mathcal{N}-2}).$$ Therefore, we get
			$$W(\lambda^{\mathcal{N}})|_{SO(\mathcal{N}-2)}=\oplus_{\lambda_1 \geq \beta_1 \geq \cdots \geq \beta_{\left\lfloor\frac{\mathcal{N}}{2}\right\rfloor-1} \geq \lambda_{\left\lfloor\frac{\mathcal{N}}{2}\right\rfloor}}m_{\lambda_{\mathcal{N}}}(\beta^{\mathcal{N}-2}) W(\beta^{\mathcal{N}-2}).$$
			
			The multiplicity $m_{\lambda^{\mathcal{N}}}(\beta^{\mathcal{N}-1})$ is determined by the number of $n$-tuples of integers  $(\gamma_1, \cdots ,\gamma_{\left\lfloor\frac{\mathcal{N}}{2}\right\rfloor})$ that satisfy the following inequalities:
			\begin{IEEEeqnarray}{rCl}
				\lambda_1 \geq \gamma_1 \geq \lambda_2 \geq \gamma_2 \geq \cdots \geq \lambda_{\left\lfloor\frac{\mathcal{N}}{2}\right\rfloor} \geq \gamma_{\left\lfloor\frac{\mathcal{N}}{2}\right\rfloor}\geq 0, \label{3.1} \\
				\gamma_1 \geq \beta_1 \geq \gamma_2 \geq \beta_2 \cdots  \geq \beta_{\left\lfloor\frac{\mathcal{N}}{2}\right\rfloor-1} \geq \gamma_{\left\lfloor\frac{\mathcal{N}}{2}\right\rfloor}\geq 0.\label{3.2} 
			\end{IEEEeqnarray}
			\item For $\mathcal{N}=2n$:\quad
			$W(\lambda^{\mathcal{N}})|_{SO(\mathcal{N}-1)}=\oplus_{\lambda_1 \geq \eta_1 \geq \cdots \geq \eta_{\left\lfloor\frac{\mathcal{N}}{2}\right\rfloor-1}\geq |\lambda_{\left\lfloor\frac{\mathcal{N}}{2}\right\rfloor}|} W(\eta)$.
			We again restrict  $W(\eta)$ to $SO(\mathcal{N}-2)$, using branching rule, we have
			$$W(\eta)|_{SO(\mathcal{N}-2)}=\oplus_{ \eta_1 \geq \beta_1 \geq  \cdots\geq \eta_{\left\lfloor\frac{\mathcal{N}}{2}\right\rfloor-1}\geq |\beta_{\left\lfloor\frac{\mathcal{N}}{2}\right\rfloor-1}|} W(\beta^{\mathcal{N}-2}).$$ Therefore, we get
			$$W(\lambda^{\mathcal{N}})|_{SO(\mathcal{N}-2)}=\oplus_{\lambda_1 \geq \beta_1 \geq \cdots \geq \beta_{\left\lfloor\frac{\mathcal{N}}{2}\right\rfloor-1} \geq \lambda_{\left\lfloor\frac{\mathcal{N}}{2}\right\rfloor}}m_{\lambda_{\mathcal{N}}}(\beta^{\mathcal{N}-2}) W(\beta^{\mathcal{N}-2}).$$
			
			The multiplicity $m_{\lambda^{\mathcal{N}}}(\beta^{\mathcal{N}-1})$ is determined by the number of $n$-tuples of integers $(\gamma_1, \cdots \gamma_{\left\lfloor\frac{\mathcal{N}}{2}\right\rfloor})$ that satisfy the following inequalities:
			\begin{align}\label{3.3}
				\lambda_1 \geq \gamma_1 \geq \lambda_2 \geq \gamma_2 \geq \cdots \geq |\lambda_{\left\lfloor\frac{\mathcal{N}}{2}\right\rfloor} |\geq \gamma_{\left\lfloor\frac{\mathcal{N}}{2}\right\rfloor},  \\  
				\gamma_1 \geq \beta_1 \geq \gamma_2 \geq \beta_2\geq \cdots  \geq |\beta_{\left\lfloor\frac{\mathcal{N}}{2}\right\rfloor-1}| \geq \gamma_{\left\lfloor\frac{\mathcal{N}}{2}\right\rfloor}. \label{3.4}	
			\end{align}
		\end{itemize}
		
		\begin{itemize}
			\item Case $\mathcal{N}>4$: We take $\beta_i=0$ for all $1\leq i \leq n-1$.  Using equations (\ref{3.1}),(\ref{3.2}), (\ref{3.3}), and (\ref{3.4}), we have
			
			\[
			I_{\lambda^{\mathcal{N}}} =m_{\lambda^{\mathcal{N}}}(0)= \begin{cases}
				\lambda_1-\lambda_2+1, & \quad \text{if} \quad \lambda_i=0 \quad \text{for all}\quad  i \geq 3, \\
				0, & \quad \text{otherwise}.
			\end{cases}
			\]

			\item Case $\mathcal{N}=4$: Using equations ($\ref{3.3})$ and ($\ref{3.4}$), we obtain $	I_{\lambda^4}=m_{\lambda^{4}}(0)=\lambda_1-|\lambda_2|+1$.
		\end{itemize}	
	\end{proof}

	We define $\mathfrak{A}^\mathcal{N}$ to be the $*$-algebra generated by $\left\{v_m^1,v_m^{\mathcal{N}} : m \in \{1,2,\hdots \mathcal{N}\}\right\}$. Using dual pairing, we  define the following action on $\mathfrak{A}^\mathcal{N}$.
	\begin{itemize}
		\item
		For $\mathcal{N}=2n+1$,
		\textbf{$U_{q^{1/2}}(\mathfrak{so}_{2n+1})$-module structure}: The pairing $\left\langle\cdot,\cdot\right\rangle$ given by \eqref{eqpair} induces a $U_{q^{1/2}}(\mathfrak{so}_{2n+1})$-module structure on  $\mathfrak{A}^{2n+1}$ which is as follows:
		\begin{IEEEeqnarray}{rCl} \label{module}
			fv=(1\otimes \left\langle\, f\,,\cdot\,\right\rangle )\Delta(v)= \left\langle f,  v_{(2)}\right\rangle v_{(1)} ; \, \mbox { for } v \in \mathfrak{A}^{2n+1}, \,f \in U_{q^{1/2}}(\mathfrak{so}_{2n+1}),
		\end{IEEEeqnarray}
		where $\Delta(v)=\sum  v_{(1)} \otimes v_{(2)}$ in Sweedler notation. 
		\item
		For $\mathcal{N}=2n$, \textbf{$U_{q}(\mathfrak{so}_{2n})$-module structure}: Replace $\left\langle\, \cdot\,,\cdot\,\right\rangle $ given in (\ref{module}) by second pair in \eqref{eqpair} will give action on  $\mathfrak{A}^{2n}$ .\\
	\end{itemize}
	Let $a = v_{\mathcal{N}-1}^1$, $b = v_{\mathcal{N}-1}^{\mathcal{N}}$, $c = v_{\mathcal{N}}^1$, and $d = v_{\mathcal{N}}^{\mathcal{N}}$. For $n>2$, utilizing the defined module structure, we get the following:
	
	\begin{enumerate}[(1)]
		\item{{\label{Observation 1}}} For $x\in\{a,b\}$ and $y\in\{c,d\}$,
		\[
		K_i(x)= \begin{cases}
			q^{-1}x &\text{ if } i=1,\\
			qx &\text{ if } i=2,\\
			x &\text{ if } i\geq 3,
		\end{cases} \quad\text{and}\quad K_i(y)=\begin{cases}
			qy &\text{ if } i=1,\\
			y &\text{ if } i\geq 2.
		\end{cases}
		\]
		
		\item{{\label{Observation 2}}} For $y\in\{c,d\}$, $E_i(y)=0$ for each $i\in\mathbb{\mathcal{N}}$, and 
		\[
		E_i(a)= \begin{cases}
			-c &\text{ if } i=1,\\
			0 &\text{ if } i\geq 2,
		\end{cases} \quad\text{and}\quad E_i(b)=\begin{cases}
			-d &\text{ if } i=1,\\
			0 &\text{ if } i\geq 2.
		\end{cases}
		\]
		\item{{\label{Observation 3}}} Moreover, using the relation $\Delta^n E_1 = E_1\otimes K_1^{\otimes n} + 1\otimes E_1\otimes K_1^{\otimes (n-1)} + \cdots + 1^{\otimes (n-1)}\otimes E_1\otimes K_1 + 1^{\otimes n}\otimes E_1$, we get 
		\begin{align*}
			E_1(b^nc^n) &=A_1^0b^{n-1}c^{n}d,\\
			E_1(ab^{n-1}c^{n-1}d) &=A_1^1b^{n-1}c^nd+A_2^1ab^{n-2}c^{n-1}d^2,\\
			\cdots &  \cdots\cdots\\
			E_1(a^{n-1}bcd^{n-1})&=A_1^{n-1}a^{n-2}bc^2d^{n-1}+A_2^{n-1}a^{n-1}cd^{n},\\
			E_1(a^nd^n)&=A_1^na^{n-1}cd^n,
		\end{align*}
		where $A_1^{0}=-q^{-2n+1}[n]_q$, $A_1^n=-q[n]_q$, $A_1^i=-q^{n-i+1}[i]_q$, $A_2^i=-q^{3i-2n+1}[n-i]_q$ for $i\in\{1,2,\cdots,n-1\}$ and $[a]_q:=\frac{q^a-q^{-a}}{q-q^{-1}}$.\\
	\end{enumerate}

	\bdfn	 We define a vector $u$ in $\mathfrak{A}^\mathcal{N}$ as the highest weight vector with highest weight $(\lambda_1,\lambda_2,0,\cdots,0)$ if it satisfies the following conditions:
	\begin{IEEEeqnarray}{rCll}
		K_i(u)&=&q^{r_i} u,  &\nonumber \\
		E_i(u)&=&0,  &\mbox{for all} \quad i \in \left\{1,\hdots , n\right\}, \nonumber 
	\end{IEEEeqnarray}
	where $r_i$ is determined as follows:
	\begin{table}[h]
		\centering
		\begin{tabular}{|c|c|c|}
			\hline
			\rowcolor{gray!30}
			\multicolumn{1}{|c|}{\textbf{Case}} & \multicolumn{1}{|c|}{\textbf{$\mathcal{N}=2n$}} & \multicolumn{1}{|c|}{\textbf{$\mathcal{N}=2n+1$}} \\
			\hline
			$n=1$ &$r_1=\lambda_1$  & $r_1=2\lambda_1$ \\
			\hline
			$n=2$ & $r_1=\lambda_1-\lambda_2$, $r_2=\lambda_1+\lambda_2$ & $r_1=\lambda_1-\lambda_2$, $r_2=2\lambda_2$ \\
			\hline
			$n=3$ & $r_1=\lambda_1-\lambda_2$, $r_2=\lambda_2$, $r_3=\lambda_2$ & $r_1=\lambda_1-\lambda_2$, $r_2=\lambda_2$, $r_3=0$ \\
			\hline
			$n>3$ & $r_1=\lambda_1-\lambda_2$, $r_2=\lambda_2$, $r_i=0$ for $i>2$ & $r_1=\lambda_1-\lambda_2$, $r_2=\lambda_2$, $r_i=0$ for $i>2$ \\
			\hline
		\end{tabular}
	\end{table}
	\edfn

	\bppsn \label{pp1}
	For $\mathcal{N}>4$, there exist $\lambda_1-\lambda_2+1$ linearly independent highest weight vectors in $\mathfrak{A}^\mathcal{N}$ with highest weight $(\lambda_1,\lambda_2,0,\hdots,0)$ with $\lambda_1\geq\lambda_2\geq 0$. 
	\eppsn
	
	\begin{proof}
		Define $r = \lambda_1 - \lambda_2$. 	
		Consider the element $\omega_\mathcal{N}$ of the Weyl group of $\mathfrak{so}_\mathcal{N}$ defined by 
		\[
		\omega_\mathcal{N}= \begin{cases}
			s_1 s_2 \cdots s_{\left\lfloor\frac{\mathcal{N}}{2}\right\rfloor -1} s_{\left\lfloor\frac{\mathcal{N}}{2}\right\rfloor} s_{\left\lfloor\frac{\mathcal{N}}{2}\right\rfloor -1}\cdots s_2 s_1, &\text{if } \mathcal{N} \text{ is odd,}\\
			s_1 s_2 \cdots s_{\left\lfloor\frac{\mathcal{N}}{2}\right\rfloor -1} s_{\left\lfloor\frac{\mathcal{N}}{2}\right\rfloor} s_{\left\lfloor\frac{\mathcal{N}}{2}\right\rfloor -2}\cdots s_2 s_1, &\text{if } \mathcal{N} \text{ is even,}
		\end{cases}
		\] 
		and consider the subword $\omega_\mathcal{N}'$ of $\omega_\mathcal{N}$ defined by 
		\[
		\omega_\mathcal{N}'= \begin{cases}
			s_1 s_2 \cdots s_{\left\lfloor\frac{\mathcal{N}}{2}\right\rfloor -1} s_{\left\lfloor\frac{\mathcal{N}}{2}\right\rfloor} s_{\left\lfloor\frac{\mathcal{N}}{2}\right\rfloor -1}\cdots s_2, &\text{if } \mathcal{N} \text{ is odd,}\\
			s_1 s_2 \cdots s_{\left\lfloor\frac{\mathcal{N}}{2}\right\rfloor -1} s_{\left\lfloor\frac{\mathcal{N}}{2}\right\rfloor} s_{\left\lfloor\frac{\mathcal{N}}{2}\right\rfloor -2}\cdots s_2, &\text{if } \mathcal{N} \text{ is even.}
		\end{cases}
		\]
		It is then straightforward to observe that $\pi_{\omega'_\mathcal{N}}(c)=0$. According to Observation ($\ref{Observation 3})$ , we can choose nonzero constants $A_k$'s such that $E_1(b^{\lambda_2}c^{\lambda_2}+A_1ab^{\lambda_2-1}c^{\lambda_2-1}d+\cdots +A_{\lambda_2}a^{\lambda_2}d^{\lambda_2})=0$. Let $u_{\lambda_2}=b^{\lambda_2}c^{\lambda_2}+A_1ab^{\lambda_2-1}c^{\lambda_2-1}d+\cdots +A_{\lambda_2}a^{\lambda_2}d^{\lambda_2}$. Using representation of $SO_q(\mathcal{N})$, we have
		\[
		\pi_{\omega^{'}_\mathcal{N}}(u_{\lambda_2})(e_0\otimes e_0\otimes \cdots \otimes e_0)=
		\pi_{\omega^{'}_\mathcal{N}}(A_{\lambda_2}a^{\lambda_2}d^{\lambda_2})(e_0\otimes e_0\otimes \cdots \otimes e_0)\neq 0
		\]
		which implies that $u_{\lambda_2}\neq 0$. Now, define \begin{align*}	
			&x_i=\begin{cases}
				c^id^{r-i}u_{2\lambda_2}, 
				& \mbox{ if } \mathcal{N}=5, \cr
				c^id^{r-i}u_{\lambda_2}, & \mbox{ if } \mathcal{N}>5, \cr
			\end{cases}
		\end{align*}
		for $i\in\{0,\cdots,r\}$. Utilizing the actions of $K_i$ and $E_i$ computed in Observations (\ref{Observation 1}) and (\ref{Observation 2}), we get that $x_i$'s are elements of $\mathfrak{A}^\mathcal{N}$ with the highest weight $(\lambda_1,\lambda_2,0,\hdots,0)$.  Now, if we look at $n^{\text{th}}$ position of each term of the  $\pi_{\omega_\mathcal{N}}(x_i)(e_0\otimes e_0\otimes \cdots \otimes e_0)$ then we can see  that one term has $n^{\text{th}}$ position $e_{2(r-i)}$ for $\mathcal{N}=2n+1$ and $e_{r-i}$ for $\mathcal{N}=2n$, while the other term has the $n^{\text{th}}$ position say $e_k$ where $k<2(r-i)$ for $\mathcal{N}=2n+1$ and $k<(r-i)$ for $\mathcal{N}=2n$. Hence $x_i$'s are linearly independent.
	\end{proof}
	
	\bppsn \label{pp2}
	There exist $\lambda_1-|\lambda_2|+1$ linearly independent highest weight vectors in $\mathfrak{A}^4$ with highest weight $(\lambda_1,\lambda_2)$ with $\lambda_1\geq|\lambda_2|$.
	\eppsn
	
	\begin{proof}
		We divide the proof in two cases. Firstly, consider the case where $\lambda_2 \geq 0$. It follows from the reasoning presented in Proposition $\ref{pp1}$.
		
		Secondly, we tackle the case where $\lambda_2 < 0$. Here, we define $a = v_{2}^1$, $b = v_{2}^{4}$, $c = v_{4}^1$, $d = v_{4}^{4}$, and $r = \lambda_1 + \lambda_2$. Notably, the action of $E_1$ and $E_2$ on $\mathfrak{A}^4$ interchanges roles  in Observation (\ref{Observation 2}), while the action of $K_1$ and $K_2$ is as follows:
		\[K_1(a) = qa,~ K_1(b) = qb,~ K_1(c) = qc, ~K_1(d) = qd.\]
		\[K_2(a) = q^{-1}a, ~K_2(b) = q^{-1}b, ~K_2(c) = c,~ K_2(d) = d. 
		\]
		Define $x_i = c^id^{r-i}u_{-\lambda_2}$ for $i \in \{0,\hdots,r\}$, where $u_{-\lambda_2}$ is defined similarly to Proposition $\ref{pp1}$ but with the replacement of $\lambda_2$ by its negative sign. Utilizing a similar argument as presented in Proposition $\ref{pp1}$, we get that $u_{-\lambda_2}\neq 0$. By using the actions of $K_1$, $K_2$, and replacing $E_1$ by $E_2$  in Observation (\ref{Observation 3}), it follows that $x_i$'s are highest weight vectors. Let $w_4=s_1s_2$ and using the representation of $SO_q(4)$, we find that $\pi_{w_4}(x_i)(e_0\otimes e_0) = Ce_{r-i-\lambda_2}\otimes e_{r-i}$, where $C$ is a non-zero constant. Hence, we conclude that the $x_i$'s are linearly independent.
	\end{proof}

	\bthm 
	The quotient space $C(SO_{q}(\mathcal{N})/SO_{q}(\mathcal{N}-2))$ is the $C^*$-algebra generated by $\left\{v_i^1,v_i^{\mathcal{N}} : i \in \{1,2,\hdots \mathcal{N}\}\right\}$.
	\ethm
	
	\begin{proof}
		That both $v_{i}^{\mathcal{N}}$ and $v_{i}^{1}=q^{\rho_{1}-\rho_{i}}(v_{\mathcal{N}-i+1}^{\mathcal{N}})^{*}$ belong to $C(SO_{q}(\mathcal{N})/SO_{q}(\mathcal{N}-2))$ for $i=1,2,\hdots,\mathcal{N}$, can be verified by a straightforward calculation. This shows that 
		\[
		C(SO_{q}(\mathcal{N})/SO_{q}(\mathcal{N}-2)) \supseteq C^*\left\{v_i^1,v_i^{\mathcal{N}} : i \in \{1,2,\hdots,\mathcal{N}\}\right\}.
		\]
		To prove the reverse inclusion, define
		\[
		\cla= \oplus_{\alpha \in \widehat{SO(\mathcal{N})}}\oplus_{i \in I_{\alpha} }W_{\alpha,i}.
		\]
		From equation (\ref{deomposition}), it is enough to  show that $\cla \subseteq C^*\left\{v_i^1,v_i^{\mathcal{N}} : i \in \{1,2,\hdots,\mathcal{N}\}\right\}$.
		From Propositions \ref{Multiplicity Calculation}, \ref{pp1}, \ref{pp2}), it follows that 
		$$ \oplus_{i \in I_{\alpha} }W_{\alpha,i} \subseteq \cla, \,\, \mbox{ for all } \alpha \in \widehat{SO(\mathcal{N})}.$$  
		This completes the proof.
	\end{proof}

	\subsection{Irreducible Representation of $C(SO_{q}(\mathcal{N})/SO_q(\mathcal{N}-2))$}
	
	Fix $q^{a_{\mathcal N}} = q$ if $\mathcal N$ is even, and $q^{a_{\mathcal N}} = q^{1/2}$ if
	$\mathcal N$ is odd.
	The matrix coefficients $(\!(x_j^i)\!)$ of the irreducible representation of
	$U_{q^{a_{\mathcal N}}}(\mathfrak{so}_{2n+1})$ with highest weight
	$\left(\tfrac12, \tfrac12, \ldots, \tfrac12\right)$ generate the Hopf-$*$-algebra
	$\mathcal O(\mathrm{Spin}_q(\mathcal N))$.
	The corepresentation of $\mathcal O(\mathrm{Spin}_q(\mathcal N))$ corresponding to
	the highest weight $(1,0,\ldots,0)$ coincides with $(\!(v_j^i)\!)$, whose matrix
	entries generate a proper Hopf-$*$-subalgebra $\mathcal O(SO_q(\mathcal N))$.
	As a consequence,  $C(SO_q(\mathcal N))$ is a $C^*$-subalgebra of
	$C(\mathrm{Spin}_q(\mathcal N))$.  Moreover, both are $C^*$-subalgebra of $C(\bbbt^n)\otimes \scrt ^{\ell(\vartheta_n)}$. 
	Therefore, one can restrict  $\phi_{\mathcal N}$  to $C(\mathrm{Spin}_q(\mathcal N))$,
	which induces a quantum
	group homomorphism
	\[
	\gamma_{\mathcal N} \colon C(\mathrm{Spin}_q(\mathcal N))
	\longrightarrow C(\mathrm{Spin}_q(\mathcal N-2)),
	\]
	establishing $\mathrm{Spin}_q(\mathcal N-2)$ as a quantum subgroup of
	$\mathrm{Spin}_q(\mathcal N)$.
	Note that the restriction of $\gamma_{\mathcal N}$ to $C(SO_q(\mathcal N))$
	coincides with $\eta_{\mathcal N}$.
	The quotient space $\mathrm{Spin}_q(\mathcal N)/\mathrm{Spin}_q(\mathcal N-2)$ is
	defined by
	\[
	C(\mathrm{Spin}_q(\mathcal N)/\mathrm{Spin}_q(\mathcal N-2))
	= \left\{ b \in C(\mathrm{Spin}_q(\mathcal N)) :
	(\gamma_{\mathcal N}\otimes \mathrm{id})\Delta(b) = I \otimes b \right\}.
	\]
	Clearly,
	$C(SO_q(\mathcal N)/SO_q(\mathcal N-2))$ is a $C^*$-subalgebra of
	$C(\mathrm{Spin}_q(\mathcal N)/\mathrm{Spin}_q(\mathcal N-2))$.
	The following result shows that these two $C^*$-algebras coincide.
	
	\bthm \label{S} Let $q \in (0,1)$. Then one has 
	$$C(\mathrm{Spin}_{q}(\mathcal{N})/\mathrm{Spin}_{q}(\mathcal{N}-2))=C(SO_{q}(\mathcal{N})/SO_{q}(\mathcal{N}-2)).$$
	\ethm 
	\prf  	By  \cite[Theorem 1.5]{Pod-1995aa}, we have 
	\begin{align}\label{deomposition1}
		C(\mathrm{Spin}_q(\mathcal N)/\mathrm{Spin}_q(\mathcal N-2))= \overline{\oplus_{\alpha \in \widehat{\mathrm{Spin}_q(\mathcal N)}}\oplus_{i \in I_{\alpha} }W_{\alpha,i}}.
	\end{align}
	Using two step branching rule (\cite{Zhe-1962aa}) as done in Proposition \ref{Multiplicity Calculation}, one can see the multiplicity of a spinor representation with half integer highest weight is zero. Therefore we have 
	$$C(\mathrm{Spin}_q(\mathcal N)/\mathrm{Spin}_q(\mathcal N-2))= \overline{\oplus_{\alpha \in \widehat{SO_q(\mathcal N)}}\oplus_{i \in I_{\alpha} }W_{\alpha,i}}= C(SO_{q}(\mathcal{N})/SO_{q}(\mathcal{N}-2)).$$
	\qed

	\noindent \textbf{Irreducible representation of $C(SO_{q}(2n+1)/SO_q(2n-1))$: } Let $\omega_{k}$ represent the following element of the Weyl group of $\mathfrak{so}_{2n+1}$:
	
	\[
	\omega_k =\begin{cases}
		I & \text{if } k =1, \\
		s_1s_2\cdots s_{k-1} & \text{if } 2 \leq k \leq n+1,\\
		s_1s_2\cdots s_{n-1}s_{n}s_{n-1}\cdots s_{2n-k+1} & \text{if } ~n+1 < k <2n+1,\\
	\end{cases}
	\] 
	
	Define $\phi_{t,w_k}$ to be the restriction of $\pi_{t,w_k}$ to the quotient space $C(SO_{q}(2n+1)/SO_q(2n-1))$, if $k \geq 2$.  For $k=1$, define $\phi_{t,I}:C(SO_{q}(2n+1)/SO_q(2n-1))\rightarrow \bbc$ such that $\phi_{t,I}(v_j^{2n+1}) = t\delta_{1(2n-j+2)}$ for $j\in\{1,2,\hdots,2n+1\}$. 
	\bthm \label{repn}
	The collection $\left\{\phi_{t,\omega_k}: t \in \bbbt, 1\leq k < 2n+1 \right\}$ gives a complete list of irreducible representations of $C(SO_{q}(2n+1)/SO_q(2n-1))$.
	\ethm
	
	\begin{proof}
		It follows from Theorem \ref{S} and Theorem $2.2$ in \cite{NesTus-2012ab}.
	\end{proof}
	
	To obtain a faithful representation of $C(SO_{q}(2n+1)/SO_q(2n-1))$, define 
	\begin{align*}
		\phi_{\omega_k}: &~C(SO_{q}(2n+1)/SO_q(2n-1))\rightarrow C(\bbbt) \otimes \scrt^{\otimes \ell({w_k})}\\
		&\phi_{\omega_k}(a)(t) = \phi_{t,\omega_k}(a) \quad \forall a \in  C(SO_{q}(2n+1)/SO_q(2n-1)).
	\end{align*}
	
	\bcrlre
	$\phi_{\omega_{2n}}$ is a faithful representation of $C(SO_{q}(2n+1)/SO_q(2n-1))$.
	\ecrlre
	
	\begin{proof}
		It is easy to see that any irreducible representation factors through $\phi_{\omega_{2n}}$ as, $\omega_k$ is a subword of $\omega_{2n}$. This proves the claim.
	\end{proof}
	
	We will illustrate these representations through diagrams, with further detailed provided in $\cite{BhuSau-2023aa}$. In these diagrams, each path from node $i$ on the left to node $j$ on the right represents an endomorphism acting on the Hilbert space  given at top of the diagram. Here, the arrows denote operators explicitly defined in  \cite[Section 3]{BhuSau-2023aa}. As an example, consider the case where $n=3$ and $w=s_1s_2s_3s_2$. The representation $\phi_{\omega}$ corresponds to Figure 1.

	\begin{figure}[h]
		\centering
		\resizebox{\textwidth}{!}{
			\hspace{-3.6cm}
			\begin{subfigure}{0.5\textwidth}
				\centering
				\def\labelstyle{\scriptstyle} 
				\xymatrix@C=15pt@R=20pt{
					& & & &  &{}\ar@{}[r]_{~~~\scalemath{0.6}{\mathbf{\ell^2(\mathbb{Z})~\otimes}}}
					& {}\ar@{}[r]_{~\scalemath{0.6}{\mathbf{\ell^{2}(\mathbb{N}_0)~\otimes}}}
					& {}\ar@{}[r]_{~\scalemath{0.6}{\mathbf{\ell^{2}(\mathbb{N}_0)~\otimes}}}
					& {}\ar@{}[rr]_{~~~~~\scalemath{0.6}{\mathbf{\ell^{2}(\mathbb{N}_0)~~~~\otimes}}}
					&& {}\ar@{}[r]_{\scalemath{0.6}{\mathbf{\ell^{2}(\mathbb{N}_0)}}}
					&& \\
					& & & & 
					&7~\circ\ar@{-}[r]^{t}&\circ\ar@{-}[r]^{+}\ar@{-}[rd]^{\;\;-}
					&\circ\ar@{-}[r]&\circ\ar@{-}[rr]&&\circ\ar@{-}[r]
					&\circ ~7\\                                                   
					& & & & 
					&6~\circ\ar@{-}[r]&\circ\ar@{-}[r]_{-}\ar@{-}[ru]_{\;\;-}
					&\circ\ar@{-}[r]^{+}\ar@{-}[rd]^{\;\;-}
					&\circ\ar@{-}[rr]&&\circ\ar@{-}[r]^{+}\ar@{-}[rd]^{\;\;-}
					&\circ ~6& \\
					& & & & 
					&5~\circ\ar@{-}[r]&\circ\ar@{-}[r]&\circ\ar@{-}[r]_{-}\ar@{-}[ru]_{\;\;-}
					&\circ\ar@{-}[rr]^{++}\ar@{->>}[drr]\ar@{-}[ddrr]
					&&\circ\ar@{-}[r]_{-}\ar@{-}[ru]_{\;\;-}
					&\circ ~5& \\
					& & & & 
					&4~\circ\ar@{-}[r]&\circ\ar@{-}[r]&\circ\ar@{-}[r]
					&\circ\ar@{--}[rr]\ar@{=}[urr]\ar@{..>}[drr]&
					&\circ\ar@{-}[r] &\circ ~4&\\
					& & & & 
					&3~\circ\ar@{-}[r]&\circ\ar@{-}[r]
					&\circ\ar@{-}[r]\ar@{-}[r]^{+}\ar@{-}[rd]^{\;\;+}&\circ \ar@{-}[rr]_{--}\ar@{-->}[urr]\ar@{-}[uurr]^{-}  &&\circ\ar@{-}[r]^{+}\ar@{-}[rd]^{\;\;+}
					&\circ ~3&\\
					& & & & 
					&2~\circ\ar@{-}[r]&\circ\ar@{-}[r]^{+}\ar@{-}[rd]&\circ\ar@{-}[r]_{-}\ar@{-}[ru]
					&\circ\ar@{-}[rr]&&\circ\ar@{-}[r]_{-}\ar@{-}[ru]
					&\circ ~2&\\
					& & & & 
					&1~\circ\ar@{-}[r]&\circ\ar@{-}[r]_{-}\ar@{-}[ru]
					&\circ\ar@{-}[r]&\circ\ar@{-}[rr]&&\circ\ar@{-}[r]
					&\circ ~1&\\   
				}
				\caption*{\hspace{5cm}\mbox{Figure 1: Diagram of $\phi_{w}$}}
				\label{fig:phi_w}
			\end{subfigure}
			\hspace{-3.5cm}
			\begin{subfigure}{0.5\textwidth}
				\centering
				\def\labelstyle{\scriptstyle} 
				\xymatrix@C=15pt@R=20pt{
					& & & &  &{}\ar@{}[r]_{~~~~\scalemath{0.6}{\mathbf{\ell^2(\mathbb{Z})\otimes}}}
					& {}\ar@{}[r]_{~~\scalemath{0.6}{\mathbf{\ell^{2}(\mathbb{N}_0)\otimes}}}
					& {}\ar@{}[r]_{~~\scalemath{0.6}{\mathbf{\ell^{2}(\mathbb{N}_0)\otimes}}}
					& {}\ar@{}[r]_{~~\scalemath{0.6}{\mathbf{\ell^{2}(\mathbb{N}_0)\otimes}}}
					& {}\ar@{}[r]_{~~\scalemath{0.6}{\mathbf{\ell^{2}(\mathbb{N}_0)\otimes}}}
					& {}\ar@{}[r]_{\scalemath{0.6}{\mathbf{\ell^{2}(\mathbb{N}_0)}}}
					&\\
					& & & & 
					&8~\circ\ar@{-}[r]^{t}&\circ\ar@{-}[r]^{+}\ar@{->}[rd]
					&\circ\ar@{-}[r]&\circ\ar@{-}[r]&\circ\ar@{-}[r]&\circ\ar@{-}[r]
					&\circ ~8\\                                                   
					& & & & 
					&7~\circ\ar@{-}[r]&\circ\ar@{-}[r]_{-}\ar@{->}[ru]
					&\circ\ar@{-}[r]^{+}\ar@{->}[rd]
					&\circ\ar@{-}[r]&\circ\ar@{-}[r]&\circ\ar@{-}[r]^{+}\ar@{->}[rd]
					&\circ ~7& \\
					& & & & 
					&6~\circ\ar@{-}[r]&\circ\ar@{-}[r]&\circ\ar@{-}[r]_{-}\ar@{->}[ru]
					&\circ\ar@{-}[r]^{+}\ar@{->}[dr]&\circ\ar@{-}[r]\ar@{->}[ddr]
					&\circ\ar@{-}[r]_{-}\ar@{->}[ru]
					&\circ ~6& \\& & & & 
					&5~\circ\ar@{-}[r]&\circ\ar@{-}[r]&\circ\ar@{-}[r]
					&\circ\ar@{-}[r]_{-}\ar@{->}[ur]&\circ\ar@{-}[r]\ar@{-}[ddr]
					&\circ\ar@{-}[r] &\circ ~5&\\
					& & & & 
					&4~\circ\ar@{-}[r]&\circ\ar@{-}[r]&\circ\ar@{-}[r]
					&\circ\ar@{-}[r]^{+}\ar@{-}[dr]&
					\circ\ar@{-}[r] \ar@{->}[uur]	&\circ\ar@{-}[r] &\circ ~4&\\
					& & & & 
					&3~\circ\ar@{-}[r]&\circ\ar@{-}[r]
					&\circ\ar@{-}[r]\ar@{-}[r]^{+}\ar@{-}[rd]&\circ \ar@{-}[r]_{-}\ar@{-}[ur]  &\circ\ar@{-}[r]\ar@{-}[uur]&\circ\ar@{-}[r]^{+}\ar@{-}[rd]
					&\circ ~3&\\
					& & & & 
					&2~\circ\ar@{-}[r]&\circ\ar@{-}[r]^{+}\ar@{-}[rd]&\circ\ar@{-}[r]_{-}\ar@{-}[ru]
					&\circ\ar@{-}[rr]&&\circ\ar@{-}[r]_{-}\ar@{-}[ru]
					&\circ ~2&\\
					& & & & 
					&1~\circ\ar@{-}[r]&\circ\ar@{-}[r]_{-}\ar@{-}[ru]
					&\circ\ar@{-}[r]&\circ\ar@{-}[rr]&&\circ\ar@{-}[r]
					&\circ ~1&\\   
				}
				\caption*{\hspace{5cm}\mbox{Figure 2: Diagram of $\psi_{\omega}$}}
				\label{fig:psi_omega}
			\end{subfigure}
		}
	\end{figure}
	
	\noindent \textbf{Irreducible Representation of $C(SO_{q}(2n)/SO_q(2n-2))$:}
	Let
	$$\omega_k =\begin{cases}
		I & \text{if } k =1, \\
		s_1s_2\cdots s_{k-1} & \text{if } 2 \leq k \leq n+1,\\
		s_1s_2\cdots s_{n-1}s_{n}s_{n-2}\cdots s_{2n-k} & \text{if } ~n+1 < k <2n,\\
	\end{cases}
	$$
	and $\tilde{\omega}_{n+1}=s_1s_2\cdots s_{n-2}s_n$.  
	Define $\psi_{t,w_k}$ and $\psi_{t,\tilde{\omega}_{n+1}}$ as the restriction of $\pi_{t,w_k}$ and $\pi_{t,\tilde{\omega}_{n+1}}$, respectively, to $C(SO_{q}(2n)/SO_q(2n-2))$. 
	For $k=1$, define $\psi_{t,I}:C(SO_{q}(2n)/SO_q(2n-2))\rightarrow \bbc$ such that $\psi_{t,I}(v_j^{2n}) = t\delta_{1(2n-j+1)}$ for $j\in\{1,2,\hdots,2n\}$. 
	
	\bthm \label{repn of D}
	The collection $\left\{\psi_{t,\omega_k}: t \in \bbbt, 1\leq k < 2n \right\} \cup \left\{\psi_{t,\tilde{\omega}_{n+1}} \right\}$ gives a complete list of irreducible representations of $C(SO_{q}(2n)/SO_q(2n-2))$.
	\ethm
	
	\begin{proof}
		It follows from Theorem \ref{S} and Theorem $2.2$ in \cite{NesTus-2012ab}.
	\end{proof}
	
	Define $\psi_{\omega_k}: C(SO_{q}(2n)/SO_q(2n-2))\rightarrow C(\bbbt) \otimes \scrt^{\otimes \ell({w_k})}$, where $
	\psi_{\omega_k}(a)(t) = \psi_{t,\omega_k}(a)$ for all $a \in  C(SO_{q}(2n+1)/SO_q(2n-1))$.
	\bcrlre
	$\psi_{\omega_{2n-1}}$ is a faithful representation of $C(SO_{q}(2n)/SO_q(2n-2))$.
	\ecrlre

	Similar to the preceding subsection, One can draw the diagram corresponding to the representation $\psi_{w}$; for more details, see \cite[Section 2]{ChaSau-2018ab}. For instance, consider the case where $n=4$ and $w=s_1s_2s_3s_4s_2$. The representation $\psi_{\omega}$ corresponds to Figure 2.

	\section{K-groups of the quotient spaces}
	This section deals with the computation of $K$-groups of the quotient spaces $C(SO_q(2n+1)/SO_q(2n-1))$, $C(SO_q(2n)/SO_q(2n-2))$, and certain intermediate $C^*$-algebras. To understand what is involved, it is instructive to understand the diagram associated with each representation, as given in the last section (see \cite{BhuSau-2023aa} for details).
	
	\subsection{K-groups of $C(SO_q(2n+1)/SO_q(2n-1))$}
	Let \[ 
	B_k^{2n+1}= 
	\begin{cases}
		C(\bbbt) & \mbox{ if } k=1, \cr
		\phi_{\omega_k}(C(SO_q(2n+1)/SO_q(2n-1))) & \mbox{ if } 1<k \leq 2n. \cr
	\end{cases}
	\]
	Observe that $B_k^{2n+1}$ is a $C^*$-subalgebra of $C(\bbbt) \otimes \scrt^{\otimes (k-1)}$.   Moreover, 
	$ \phi_{\omega_k}(v^{2n+1}_{j'})=0$ for $j>k+1$. Therefore, $B_k^{2n+1}$ is generated by $ \phi_{\omega_k}(v^{2n+1}_{j'})$'s with $j\leq k+1$. We denote $\phi_{\omega_k}(v^{2n+1}_{j'})$ by $x_j$ for notational simplicity. Note that for each $k \leq n$, it follows that $x_{k+1} = 0$. Define the homomorphism 
	\[
	\sigma:\scrt \rightarrow \bbc, \,\quad  S \mapsto 1.
	\]
	For $1< k \leq 2n$, let
	$$\rho_k: C(\bbbt) \otimes \scrt^{\otimes (k-1)} \rightarrow C(\bbbt) \otimes \scrt^{\otimes (k-2)}$$ 
	be the homomorphism given by 
	\[
	\rho_k(a)=(1 \otimes 1^{\otimes (k-2)} \otimes \sigma)(a), \quad  \mbox{ for } a \in C(\bbbt) \otimes \scrt^{\otimes (k-1)}.
	\]
	It follows from the description of representations  of $C(SO_q(2n+1)/SO_q(2n-1))$ given in the previous section  (see \cite{BhuSau-2023aa} for details)   that $\rho_k(a) \in B_{k-1}^{2n+1}$ if $a \in B_k^{2n+1}$. This induces a homomorphism 
	$$\rho_k: B_k^{2n+1}\rightarrow B_{k-1}^{2n+1}$$  given by  the restriction of $\rho_k$ to the subalgebra $B_k^{2n+1}$. 
	
	\blmma \label{ideal} 
	Let  $1< k \leq 2n$. Denote by $\mathcal{I}_k$ the kernel of the homomorphism $\rho_k$. Then
	one has the following.
	\[
	I_k=\begin{cases}
		\langle x_{k} \rangle & \mbox{ if } 1< k\leq n, \cr 
		\langle x_{n+1},x_{n+2} \rangle & \mbox{ if } k=n+1, \cr 
		\langle x_{k+1} \rangle & \mbox{ if } n+2 \leq  k \leq 2n. \cr 
	\end{cases}
	\]
	\elmma
	
	\begin{proof}
		We prove the claim for  $1 < k\leq n$. The other cases follows by a similar argument. From the diagram  of $\eta_{\omega_k}$ given in the previous section (see \cite{BhuSau-2023aa}), it is easy to see that $x_k \in \ker \rho_k$, which further implies that $\mathcal{I}_k \subset \ker \rho_k$.  For the converse part,  let $\pi$ be an irreducible representation of $B_k^{2n+1}$ that vanishes on $\mathcal{I}_k$.  It follows from Theorem \ref{repn} that $\pi \cong \pi_{t,w}$, where $w \in W_n$ is a subword of $\omega_k$. Thus,  $\pi$ factors through $\rho_k$.  This proves  that  $\ker \rho_k \subset \mathcal{I}_k$, hence the claim. 
	\end{proof}
	
	\blmma \label{shortexactseq}
	For $1 < k \leq 2n$, one has the following short exact sequence $\chi_{k}$ of $C^*$-algebras. 
	\[
	\chi_{k}: \quad 0\longrightarrow C(\bbbt) \otimes \clk \xrightarrow{i} B_{k}^{2n+1}\xrightarrow{\rho_k}  B_{k-1}^{2n+1}\longrightarrow  0.
	\]
	\elmma

	\prf  
	Fix $1 < k \leq n$. Invoking  Lemma \ref{ideal}, it   is enough to show that $$\mathcal{I}_k=C(\bbbt) \otimes \clk(\ell^2(\bbn))^{\otimes (k-1)}.$$  First, observe that $x_k= t \otimes q^N \otimes \cdots q^N \in  C(\bbbt) \otimes \clk(\ell^2(\bbn_0))^{\otimes (k-1)}$, hence we have 
	$$\mathcal{I}_k \subset C(\bbbt) \otimes \clk(\ell^2(\bbn_0))^{\otimes (k-1)}.$$ 
	Further, note that for $1 \leq l \leq k-1$ we have 
	\[
	x_l=t \otimes (q^N  )^{\otimes (l-1)}\otimes \sqrt{1-q^{2N}}S^* \otimes 1^{\otimes (k-l-1)}.
	\]
	One can verify  that 
	\[
	x_l^jx_k^r1_{\{1\}}(x_k^*x_k)(x_l^*)^i=t^r \otimes p^{\otimes (l-1)}\otimes p_{ij}\otimes p^{\otimes (k-l-1)} \in \mathcal{I}_k
	\]
	for $r,i,j \in \bbn$. By taking product of such elements over $l$, we can see that 
	\[
	t^r \otimes p_{i_1j_1}\otimes p_{i_2j_2}\otimes \cdots \otimes p_{i_{k-1}j_{k-1}}\in \mathcal{I}_k
	\]
	for all $i_1, j_1, i_2, j_2 \cdots i_{k-1},j_{k-1} \in \bbn$. Hence we have 
	\[
	C(\bbbt) \otimes \clk(\ell^2(\bbn_0))^{\otimes (k-1)} \subset \mathcal{I}_k.
	\]
	This settles the case for $1<k \leq n$. In other cases, the claim follows by a similar argument. 
	\qed
	
	\bthm \label{K-groups-B}
	For $1 \leq k \leq 2n$, define $u_k= t \otimes p^{\otimes (k-1)} +1-1\otimes  p^{\otimes (k-1)}$. 
	Then one has 
	\begin{IEEEeqnarray*}{rCl}
		K_0(B_{k}^{2n+1})&=&\begin{cases}
			\langle[1]\rangle \cong \bbz & \mbox{ if } 1 \leq  k \leq n, \cr 
			\langle[1]\rangle \oplus \langle[1 \otimes p^{\otimes (n) }\otimes 1^{\otimes (k-n-1)}]\rangle  \cong \bbz \oplus \bbz/2\bbz & \mbox{ if } n+1 \leq k \leq 2n, \cr 
		\end{cases} \\
		K_1(B_{k}^{2n+1})&=&\langle [u_k] \rangle\cong \bbz.
	\end{IEEEeqnarray*}
	\ethm
	\prf To prove the claim, we will apply induction on $k$. For $k=1$, it is clear. Assume the result to be true for $k-1$. 
	From the Lemma $\ref{shortexactseq}$, 
	we have the short exact sequence,
	\[
	0 \longrightarrow C(\bbbt) \otimes \clk \longrightarrow B_{k}^{2n+1}
	\stackrel{\rho_{k}}{\longrightarrow} B_{k-1}^{2n+1}\longrightarrow 0.
	\]
	which gives rise to the following six-term sequence in $K$-theory.
	\[
	\def\labelstyle{\scriptstyle}
	\xymatrix@C=35pt@R=35pt{
		& K_0(C(\bbbt) \otimes \clk)\ar@{->}[r]
		& K_0(B_{k}^{2n+1})\ar@{->}^{K_{0}(\rho_{k})}[r]
		& K_0(B_{k-1}^{2n+1}))\ar@{->}^{\delta}[d] \\
		&  K_1(B_{k-1}^{2n+1})\ar@{->}^{\partial}[u]\ar@{<-}^{K_{1}(\rho_{k})}[r]
		&K_1(B_{k}^{2n+1})\ar@{<-}[r]       &  K_1(C(\bbbt) \otimes \clk) &
	}
	\]
	To compute the $K$-groups from the above six-term exact sequence, we consider the three following cases separately.\\

	\noindent \textbf{Case (A) $1< k\leq n$:} 
	Since $\rho_{k}(1) = 1$, it follows that $\delta([1]) = 0$.  Further, note that 
	the operator $\widetilde{Y} = t \otimes \underbrace{q^{N} \otimes q^{N} \otimes \cdots \otimes q^N}_{k-2} \otimes S^{*}$ is in $B_{k}^{2n+1}$ as 
	$\eta_{\omega_{k}}(x_{k-1}) - \widetilde{Y}$ lies in $C(\bbbt) \otimes \clk$. Define
	\[
	Y = 1_{\left\{1\right\}}(\widetilde{Y}^{*}\widetilde{Y})\widetilde{Y} +1-1_{\left\{1\right\}}(\widetilde{Y}^{*}\widetilde{Y}).
	\]	
	Then $Y$ is an isometry such that $\rho_{k}(Y) = u_{k-1}$ and hence we have
	\[
	\partial ([u_{k-1}]) = [1-Y^{*}Y]-[1-YY^{*}]= [1 \otimes \underbrace{p \otimes p\otimes ...\otimes p}_{k-1}].
	\]
	Using this,  the claim follows by following the six term exact sequence.\\
	
	\medskip
	\noindent \textbf{Case (B) $k=n+1$:}  In this case, the $\delta$ map is zero as  $\delta([1]) = 0$. However, $\partial$ map is not surjective as in the previous case. To see this, first note that $\widetilde{Z} = t \otimes \underbrace{q^{N} \otimes q^{N} \otimes \cdots }_{n-1} \otimes (S^{*})^2$ is in $B_{n+1}^{2n+1}$ as 
	$\eta_{\omega_{n}}(x_{n}) - \widetilde{Z}$ lies in $C(\bbbt) \otimes \clk$. Define
	\[
	Z = 1_{\left\{1\right\}}(\widetilde{Z}^{*}\widetilde{Z})\widetilde{Z} +1-1_{\left\{1\right\}}(\widetilde{Z}^{*}\widetilde{Z}).
	\]	
	Then $Z$ is an isometry  $ 1 \otimes \underbrace{p \otimes \cdots \otimes p}_{n-1} \otimes (S^{*})^2$ such that $\rho_{n+1}(Y) = u_{n}$ and hence
	\[
	\partial ([u_{n}]) = [1-Z^{*}Z]-[1-ZZ^{*}]= [1 \otimes \underbrace{p \otimes \cdots \otimes p}_{n-1} \otimes (p+p_1)] =2 [1 \otimes \underbrace{p \otimes \cdots \otimes p}_{n}  ] .
	\]  
	Therefore $K_0(i)( [1 \otimes \underbrace{p \otimes \cdots \otimes p}_{n}  ])$ is a nontrivial element and generates the torsion subgroup $\bbz/2\bbz$ of $K_0(C_{n+1})$.   \\

	\medskip
	\noindent \textbf{Case (C) $n+2\leq  k\leq 2n$:}  We assume that  $K_0(B_{k-1}^{2n+1})$ is generated by $[1]$ and $[1 \otimes p^{\otimes (n) }\otimes 1^{\otimes (k-n-2)} ]$, which holds for $k=n+1$ case.  Using the description of representation of $B_{k}^{2n+1}$ (see the diagram),  we have 
	$$\rho_{k}(1) = 1 \,  \mbox{ and }  \,  \rho_{k}(1_{\{1\}}(\eta_{\omega_k}(x_{n+1}x_{n+1}^*)) = \sigma(1 \otimes p^{\otimes (n) }\otimes 1^{\otimes (k-n-1)}) =1 \otimes p^{\otimes (n) }\otimes 1^{\otimes (k-n-2)}.$$
	Hence we get 
	\[
	\delta([1]) = 0 \, \mbox{ and }\,  \delta ([1 \otimes p^{\otimes (n) }\otimes 1^{\otimes (k-n-2)} ]) =0
	\]
	As in the first case, we get surjectivity of $\partial$. The claim now follows by chasing the six term exact sequence. 
	\qed 
	\subsection{K-groups of $C(SO_q(2n)/SO_q(2n-2))$}

	Let \[ 
	D_k^{2n}= 
	\begin{cases}
		C(\bbbt) & \mbox{ if } k=1, \cr
		\psi_{\omega_k}(C(SO_q(2n)/SO_q(2n-2))) & \mbox{ if } 1<k \leq 2n-1. \cr
	\end{cases}
	\]
	Observe that $D_k^{2n}$ is a $C^*$-subalgebra of 
	$C(\bbbt) \otimes \scrt^{\otimes (k-1)}$. 
	Moreover, if $k \leq n$, then 
	\[
	\psi_{\omega_k}(v^{2n}_{j'}) = 0 \quad \text{for } j > k+1,
	\]
	whereas if $k > n$, then 
	\[
	\psi_{\omega_k}(v^{2n}_{j'}) = 0 \quad \text{for } j > k+2.
	\]
	Hence $D_k^{2n}$ is generated by the elements 
	$\psi_{\omega_k}(v^{2n}_{j'})$ with $j \leq k+1$ when $k \leq n$, 
	and with $j \leq k+2$ when $k > n$. 
	For notational convenience, we write
	\[
	y_j := \psi_{\omega_k}(v^{2n}_{j'}).
	\]
	 Let
	$\rho_k$
	be the homomorphism defined in the previous subsection. 
	It follows from the diagram associated with a representation of $D_k^{2n}$ that $\rho_k(a) \in D_{k-1}^{2n}$ if $a \in D_k^{2n}$. This induces a homomorphism 
	from $D_k^{2n}$  to $D_{k-1}^{2n}$  given by  the restriction of $\rho_k$ to the subalgebra $D_{k}^{2n}$. We continue to denote it by the same notation.  
	\blmma \label{ideal1} 
	Let  $1< k \leq 2n-1$. Denote by $\mathcal{J}_k$ the kernel of the homomorphism $\rho_k$. Then
	one has 
	\[
	\mathcal{J}_k=\begin{cases}
		\langle y_{k} \rangle & \mbox{ if } k\leq n, \cr 
		\langle y_{n+1},y_{n+2} \rangle & \mbox{ if } k=n+1, \cr 
		\langle y_{k+1} \rangle & \mbox{ if } n+2\leq k \leq 2n-1, \cr 
	\end{cases}
	\]
	\elmma
	\prf  The claim follows from a similar  computations given in Lemma \ref{ideal}.
	\qed
	\blmma \label{shortexactseq1}
	For $1 < k \leq 2n-1, k\neq n+1$, one has the following short exact sequence $\xi_{k}$ of $C^*$-algebras. 
	\[
	\xi_{k}: \quad 0\longrightarrow C(\bbbt) \otimes \clk \xrightarrow{\iota} D_{k}^{2n}\xrightarrow{\rho_k}  D_{k-1}^{2n}\longrightarrow  0.
	\]
	For $k=n+1$, one has 
	\[
	\xi_{n+1}: \quad 0 \longrightarrow C(SU_q(2)) \otimes \clk \xrightarrow{\iota} D_{n+1}^{2n}\xrightarrow{\rho_{n+1}}  D_{n}^{2n}\longrightarrow  0.
	\]
	\elmma
	\prf We will prove the claim for $k=n+1$. The remaining cases follow along the same line as given in Lemma \ref{shortexactseq}. By Lemma \ref{shortexactseq1}, it remains to show that the ideal  $\mathcal{J}_{n+1}$ generated by $y_{n+1}=t \otimes (q^{N})^{\otimes (n-2)} \otimes \sqrt{1-q^{2N}}S^* \otimes q^N$ and $y_{n+2}=t \otimes (q^{N})^{\otimes n} $ is isomorphic to $C(SU_q(2)) \otimes \clk$.  For that, we will simply interchange the second and $(n-1)$th-tensor component. By doing so, we get   $y_{n+1}= t \otimes \sqrt{1-q^{2N}}S^* \otimes (q^{N})^{\otimes n-1}  \in  C(SU_q(2))  \otimes \clk(\ell^2(\bbn_0))^{\otimes (n-1)}$, hence we have 
	$$\mathcal{J}_k \subset C(SU_q(2))  \otimes \clk(\ell^2(\bbn_0))^{\otimes (n-1)}.$$ 
	Further, note that 
	\[
	y_{n+2}1_{\{1\}}(y_{n+2}^*y_{n+2})=t\otimes p^{\otimes (n)}  \quad  \mbox{ and } \quad 
	y_{n+1}y_{n+1}^*-y_{n+2}^*y_{n+2}=1\otimes 1 \otimes  p^{\otimes (n-2)}.
	\]
	Hence we have 
	\[
	t\otimes p^{\otimes (n)} , 1 \otimes 1  \otimes p^{\otimes (n-1)},   t \otimes \sqrt{1-q^{2N}}S^* \otimes (p)^{\otimes n-1} \in \mathcal{J}_k.
	\]
	This shows that 
	$$ C(SU_q(2)) \otimes  p^{\otimes (n-1)}   \in \mathcal{J}_k.$$
	By applying  $y_1,y_2, \cdots y_{n-1},$ and their adjoints appropriately, one can verify that 
	$$ C(SU_q(2)) \otimes  p_{i_1j_1} \otimes  p_{i_{n-1}j_{n-1}}\in \mathcal{J}_k$$
	for all $i_1, \cdots i_{n-1}, j_1, \cdots j_{n-1}\in \bbn_0$. This proves the claim. 
	\qed\\
	For $1 \leq k \leq 2n-1$, let 
		$$u_k= t \otimes p^{\otimes (k-1)} +1-1\otimes  p^{\otimes (k-1)}.$$
	For $n+1\leq k\leq 2n-1$, define 
	\begin{IEEEeqnarray*}{rCl}
	a_n^k &=& t \otimes p^{\otimes (n-2)} \otimes p \otimes S^* \otimes 1^{\otimes(k-n-1)}, \\
			b_n^k&=& t \otimes p^{\otimes (n-2)}\otimes S^* \otimes p \otimes 1^{\otimes(k-n-1)},\\
			\mathbb{P}_k&=&1 \otimes p^{\otimes (n)} \otimes  1^{\otimes(k-n-1)}, \\
U_k &=&\begin{pmatrix}
	a_n^k+1-  (a_n^k)^*a_n^k  & 	\mathbb{P}_k  \\
		0 &  (b_n^k)^*  +1-  (b_n^k)^*b_n^k
		\end{pmatrix}.
		\end{IEEEeqnarray*}
\bppsn  \label{unitary} 
For $n+1\leq k \leq 2n-1$, the matrix  $U_k$ is a unitary in $M_2(D_k^{2n})$.  Moreover, 
$$\rho_{k}(U_k)=\begin{cases} 
	U_{k-1} & \mbox{ if } k\neq n+1, \cr \\
\begin{pmatrix}
	u_n  & 	0  \\
	0 & 1
\end{pmatrix}& \mbox{ if } k= n+1.\cr\\
\end{cases} $$
 \eppsn 
  \prf Observe that 
 	$$\widetilde{W} = t \otimes \underbrace{q^{N} \otimes q^{N} \otimes \cdots }_{n-1} \otimes S^* \otimes 1^{\otimes (k-n-1)}\in  D_{k}^{2n}
\mbox { as } \psi_{\omega_{k}}(y_{n}) - \widetilde{W} \in \iota(C(SU_q(2)) \otimes \clk).$$ 
Similarly, one has 
$$\widetilde{V} = t \otimes \underbrace{q^{N} \otimes q^{N} \otimes \cdots }_{n-2} \otimes S^* \otimes q^N \otimes 1^{\otimes (k-n-1)} \in  D_{k}^{2n}
\mbox { as } \psi_{\omega_{k}}(y_{n+1}) - \widetilde{V} \in \iota(C(SU_q(2)) \otimes \clk).$$ 
Using this and continuous functional calculus, one can see that $a_n^k, b_n^k\in D_k^{2n}$.  Further,  we have 
$$\mathbb{P}_k=(a_n^k)^*a_n^k-a_n^k(a_n^k)^* \in D_k^{2n} $$ 
The rest follows from a straightforward verification. 

 \qed 
 
	\bthm 
	Let $1\leq k \leq 2n-1$. 
	Then one has 
	\begin{IEEEeqnarray*}{rCl}
		K_0(D_k^{2n})&=&\begin{cases}
			\langle[1]\rangle \cong \bbz & \mbox{ if } 1 \leq  k \leq n, \cr 
			\langle[1]\rangle \oplus \langle[1 \otimes p^{\otimes (n) }\otimes 1^{\otimes (k-n-1)}]\rangle  \cong \bbz \oplus \bbz& \mbox{ if } n+1 \leq k \leq 2n-1, \cr 
		\end{cases} \\
		K_1(D_k^{2n})&=& \begin{cases}
			\langle  [u_k] \rangle \cong \bbz & \mbox{ if } 1 \leq  k \leq n, \cr 
			\langle[u_k]\rangle \oplus \langle[U_k]\rangle  \cong \bbz \oplus \bbz& \mbox{ if } n+1 \leq k \leq 2n-1. \cr 
		\end{cases}
	\end{IEEEeqnarray*}
	\ethm
	\prf The claim is true for $k=1$ as $D_1^{2n}=C(\bbbt)$. Assume the result to be true for $k-1$. To prove the claim for $k$, we split the proof in three cases.

	\noindent \textbf{Case (A) $1< k\leq n$:}  In this case, the proof is similar to the proof of case $(A)$ in Theorem \ref{K-groups-B}.

	\noindent \textbf{Case (B) $k=n+1$:}   
	From the Lemma $\ref{shortexactseq1}$, 
	we have the short exact sequence,
	\[
	\xi_{n+1}: \quad 0\longrightarrow C(SU_q(2)) \otimes \clk \xrightarrow{\iota} D_{n+1}^{2n}\xrightarrow{\rho_{n+1}}  D_{n}^{2n}\longrightarrow  0.
	\]
	which gives rise to the following six-term sequence in $K$-theory.
	\[
	\def\labelstyle{\scriptstyle}
	\xymatrix@C=35pt@R=35pt{
		& K_0(C(SU_q(2)) \otimes \clk)\ar@{->}[r]
		& K_0(D_{n+1}^{2n})\ar@{->}^{K_{0}(\rho_{n+1})}[r]
		& K_0(D_{n}^{2n}))\ar@{->}^{\delta}[d] \\
		&  K_1(D_{n}^{2n})\ar@{->}^{\partial}[u]\ar@{<-}^{K_{1}(\rho_{n+1})}[r]
		&K_1(D_{n+1}^{2n})\ar@{<-}[r]       &  K_1(C(SU_q(2)) \otimes \clk) &
	}
	\]
	Identifying $C(SU_q(2)) \otimes \clk)$ with its image under the injective map $\iota$, we have 
	$$K_0(C(SU_q(2)) \otimes \clk)=\langle [1 \otimes p ^{\otimes (n-2)}\otimes 1 \otimes p]\rangle, \, \, 
	K_1(C(SU_q(2)) \otimes \clk )= \langle [u_{n+1}] \rangle.$$
	Moreover, from the induction hypothesis, we have 
	\[
	K_0(D_{n}^{2n})=	\langle[1]\rangle \cong \bbz, \quad \, \, K_1(D_{n}^{2n})= \langle [u_n] \rangle\cong \bbz.
	\]
	Since $\rho_{n+1}(1)=1$, we get   $\delta([1]) = 0$. Moreover, by Proposition~\ref{unitary}, the element $u_n$ admits a unitary lift $U_{n+1}$ in $D_{n+1}^{2n}$, and hence we get 
		\[
	\partial ([u_{n}]) =[0] .
	\] 
With these facts in hand, the  claim now follows by simply chasing  the six term exact sequence of $K$-groups. 
	
	\noindent \textbf{Case (C) $n+2\leq  k\leq 2n-1$:}  The short exact sequence $\xi_k$ 
	gives rise to the following six-term sequence in $K$-theory.
	\[
	\def\labelstyle{\scriptstyle}
	\xymatrix@C=35pt@R=35pt{
		& K_0(C(\bbbt) \otimes \clk)\ar@{->}[r]
		& K_0(D_{k}^{2n})\ar@{->}^{K_{0}(\rho_{k})}[r]
		& K_0(D_{k-1}^{2n}))\ar@{->}^{\delta}[d] \\
		&  K_1(D_{k-1}^{2n})\ar@{->}^{\partial}[u]\ar@{<-}^{K_{1}(\rho_{k})}[r]
		&K_1(D_{k}^{2n})\ar@{<-}[r]       &  K_1(C(\bbbt) \otimes \clk) &
	}
	\] Using the induction hypothesis,  it follows that $K_0(D_{k-1}^{2n})$ is generated by $[1]$ and $[1 \otimes p^{\otimes (n) }\otimes 1^{\otimes (k-n-2)} ]$.  Using the diagram  associated with  $\psi_{\omega_k}$ of $D_k^{2n}$,  we have 
	$$\rho_{k}(1) = 1 \,  \mbox{ and }  \,  \rho_{k}(1_{\{1\}}(\psi_{\omega_k}(y_{n+1}y_{n+1}^*)) = \rho_{k}(1 \otimes p^{\otimes (n) }\otimes 1^{\otimes (k-n-1)}) =1 \otimes p^{\otimes (n) }\otimes 1^{\otimes (k-n-2)}.$$
	Hence we get 
	\[ 
	\delta([1]) = 0 \, \mbox{ and }\,  \delta ([1 \otimes p^{\otimes (n) }\otimes 1^{\otimes (k-n-2)} ]) =0.
	\]
	Furthermore, by the induction hypothesis, $K_1(D_{k-1}^{2n})$ is generated by $[u_{k-1}]$ and $[U_{k-1}]$. 
	Since $u_{k-1}$ and $U_{k-1}$ admit the unitary lifts $u_k$ and $U_k$, respectively, it follows that
	\[
	\partial([u_{k-1}])=0\,  \mbox{ and }\, \partial([U_{k-1}])=0.
	\] 
	The claim now follows by following the six term exact sequence. 
	\qed 
	
	\newsection{$m$-torsioned quantum double suspension}
	Throughout the section, $A$ will be a unital separable  $C^*$-algebra. For negative $m$, we denote by  $(S^*)^m$ the operator $S^m$.
	
	\bdfn \label{QDS}  Let $A$ be a unital $C^*$-algebra. For $m \in \bbz\setminus \{0\}$, we define its $m$-torsioned quantum double suspension  as the unital $C^*$-algebra $\Sigma_{m}^{2}A$ for which there exists an essential extension 
	\[
	0 \rightarrow A \otimes \clk \rightarrow \Sigma_{m}^{2}A \rightarrow C(\bbbt) \rightarrow 0 
	\]
	such that the corresponding Busby invariant $\beta: C(\bbbt) \rightarrow Q(A \otimes \clk)$ mapping $\beta(t)=[1 \otimes (S^*)^{m}]$. Equivalently, $\Sigma_{m}^{2}A$ can be defined as the $C^*$-subalgebra of $\scrt \otimes A$ generated by $A \otimes \clk$ and $1 \otimes (S^*)^{m}$.  
	\edfn
	\brmrk \label{rm1} \begin{enumerate}[(i)]
		\item Observe that 
		the $C^*$-algebra  $\Sigma_1^{2}A$  is the quantum double suspension $\Sigma^2A$ of $A$ defined in \cite{HonSzy-2002aa}. If $m\neq \pm 1$, the $C^*$-algebra  $\Sigma_m^{2}A$   is a subalgebra of $\Sigma^{2}A$. 
		\item It is clear from the definition that $\Sigma^2_mA\cong \Sigma^2_{-m}A$. 
		\item Let $A$ and $B$ be two unital $C^*$-algebra and $f:A \rightarrow B$ be a unital homomorphism. Define $\tilde{f} :A \rightarrow \Sigma^2B$ by 
		\[
		\tilde{f}(a)= f(a)\otimes p.
		\]
		By the universal property of quantum double suspension, there exists a homomorphism $\bar{f}:\Sigma^2A \rightarrow \Sigma^2B$ such that 
		\[
		\bar{f}(a\otimes k)= f(a)\otimes k, \, k \in \clk \mbox{ and }  \, \bar{f}(1 \otimes S^*)=1 \otimes S^*.
		\]
		Note that $ \bar{f}(\Sigma_m^2A)\subset \Sigma_m^2B$ for all $ m \in \bbn$. This induces a unital homomorphism from $\Sigma_m^2A$ to $\Sigma_m^2B$, which we denote by $\Sigma_m^2f$. With a slight abuse of notation, we denote the same map with an enlarged codomain by the same notation.
	\end{enumerate}
	\ermrk
	
	In what follows, we discuss the representation theory of $m$-torsioned quantum double suspension. In the light of remark (\ref{rm1}), we will assume $m$ to be a positive integer. 
	Let  $\rho$ be an
	irreducible representation of $A$ on a Hilbert space $\clh$. Let $\tilde{\clh}= \clh\otimes \ell^2(\bbn)$ and
	define $\psi_{\rho}:A \rightarrow \cll(\tilde{\clh})$ as $\psi_{\pi}(a)=\pi(a)\otimes p$. From the universal 
	property of $\Sigma^2A$, there exists a representation $\Sigma^2(\rho)$ of $\Sigma^2A$ on $\tilde{\clh}$ 
	such that $\Sigma^2(\rho)\,(1\otimes S^*)=1\otimes S^*$ and  $\Sigma_m^2(\rho)\,(a\otimes p)=\rho(a)\otimes p$ for all $a \in A$. Let $\Sigma_m^2(\rho)$ be the restriction of $\Sigma^2(\rho)$ to $\Sigma_m^2A$.
	\bppsn
	Let $\rho$ be an irreducible representation of $A$. Then $\Sigma_m^2(\rho)$ defined above is an irreducible representation of $\Sigma_m^2A$.
	\eppsn  
	\prf
	Take $\tilde{h} \in \tilde{\clh}$.
	Then there exists an $n \in \bbn_0$ such that $(1\otimes p_n)\tilde{h}\neq0$.
	Hence $h:=(1\otimes p_{n,0})(1\otimes p_n)\tilde{h}\in \clh \otimes e_0$. Since $\pi$ is
	irreducible, one has
	$$\overline{\Sigma_m^2(\rho)(A\otimes p)(\tilde{h})}=\overline{\rho(A)h}\otimes e_0 = \clh \otimes e_0.$$
	Now by applying $\Sigma_m^2(\rho)(1\otimes p_{i,0})$, one can show 
	that $\overline{\Sigma_m^2(\rho)(\Sigma_m^2A)\tilde{h}}=\tilde{\clh}$. This completes the proof.
	\qed 
	
	\bppsn
	Let  $\rho_1$ and $\rho_2$ be two irreducible 
	representations of $A$.  Then $\rho_1$ and $\rho_2$ are unitarily equivalent if and only if $\Sigma_m^2(\rho_1)$ and $\Sigma_m^2(\rho_2)$ 
	of $\Sigma^2A$ are  unitarily equivalent.
	\eppsn
	\prf 
	Assume that $\Sigma_m^2(\rho_1)$ and $\Sigma_m^2(\rho_2)$ are equivalent. 
	Then there exists a unitary operator  $\tilde{U}$ on $\clh \otimes \ell^2(\bbn)$ such
	that $\tilde{U}^*\Sigma^2(\rho_1)(\tilde{a})\tilde{U}=\Sigma^2(\rho_2)(\tilde{a})$ for all  $\tilde{a} \in \Sigma_m^2A$. 
	Hence $\tilde{U}^*(1\otimes p)\tilde{U}=\tilde{U}^*\Sigma^2(\rho_1)(1\otimes p)\tilde{U}=\Sigma^2(\rho_2)(1\otimes p)=1\otimes p$ and 
	similarly  $\tilde{U}(1\otimes p)\tilde{U}^*=1\otimes p$.
	This implies that $\tilde{U}(1\otimes p)$ is a unitary operator on $\clh \otimes e_0$. Let $V : \clh \otimes e_0 \longrightarrow \clh$
	be the unitary operator that sends $h\otimes e_0$ to $h$ for $h \in \clh$. Define $U=V\tilde{U}V^*$. Then it is easy to see that $U$ is 
	an unitary operator on $\clh$ and $U^*\rho_1(a)U=\rho_2(a)$ for all $a \in A$. Therefore $\rho_1$ and $\rho_2$ are equivalent.

	To see the converse part, let $U$ be an  intertwining  unitary  between  $\rho_1$ and $\rho_2$. Then $U \otimes 1$ will be an  intertwining  unitary  between $\Sigma^2_m\rho_1$ and $\Sigma^2_m\rho_2$. 
	\qed 
	
	We will now show that each infinite dimensional irreducible representation of $\Sigma_m^2A$ arises in this manner. 
	Let $\pi$ be an irreducible representation of $\Sigma_m^2A$ on $\tilde{\clh}$. For each $ i \in \bbn_0$, define 
	$H_i=\pi(1\otimes p_i)\tilde{\clh}$.
	Since $\left\{1\otimes p_i\right\}_{i=0}^{\infty}$ are orthogonal projections, $H_i \cap H_j=\left\{0\right\}$ and
	$H_i \bot H_j$ for all $i\neq j \in \bbn_0$. 
	\bppsn \label{chap7-ppsn-invariant}
	Let $\pi$ be a representation of $\Sigma_m^2A$ on $\tilde{\clh}$ and $H_i=\pi(1\otimes p_i)\tilde{\clh}$.
	Then $\oplus_{i=0}^{\infty} H_i$ is an invariant subspace of $\pi$.
	\eppsn
	\prf Take $h \in H_i$. For $i>0$, we have $\pi(a\otimes p)h =  \pi(a\otimes p)\pi(1\otimes p_i)h = 0 \in H_i$. 
	Also,
	\begin{IEEEeqnarray}{rCl}
		\pi(1\otimes S^m)h &=& \pi(1\otimes S^m)\pi(1\otimes p_i)h \nonumber \\
		& = & \pi(1\otimes \left|e_{i-m}\right\rangle \left\langle e_i\right|)h \nonumber \\
		& = &  \pi(1\otimes p_{i-m})\pi(1\otimes \left|e_{i-m}\right\rangle \left\langle e_i\right|)h\nonumber \\
		& \in & H_{i-m}. \nonumber
	\end{IEEEeqnarray}
	
	Similarly,
	one can show that $\pi(1\otimes (S^*)^m)$ keeps $\oplus_{i=0}^{\infty} H_i$  invariant. This proves the claim.
	\qed 
	
	Since $\pi$ is an irreducible representation, either $\oplus_{i=0}^{\infty}H_i$ is $\left\{0\right\}$ or ${\tilde{\clh}}$. \\
	\textbf{Case 1: $\oplus_{i=0}^{\infty}H_i=\tilde{\clh}$}. \\
	Define $T_{ij}: H_i \rightarrow H_j$ as $T_{ij}h=\pi(1\otimes \left|e_j\right\rangle \left\langle e_i\right|)h$.  
	Then it is easy to see that $T_{ij}T_{ji}=id_{H_i}$ and $T_{ji}T_{ij}=id_{H_j}$. This shows that $H_i$ is isomorphic to $H_j$. Hence we have
	$\oplus_{i=0}^{\infty} H_i = H_0 \otimes \ell^2(\bbn)$.
	Define $\rho: A \rightarrow \cll(H_0)$ by
	\begin{IEEEeqnarray}{rCl}\label{chap7-eqn-rho} 
		\rho(a)h=\pi(a\otimes  \left|e_0\right\rangle \left\langle e_0\right|)h. 
	\end{IEEEeqnarray}
	To see that the given map is well defined, take $h \in H_0$. Then
	$\pi(a\otimes \left|e_0\right\rangle \left\langle e_0\right|)h = 
	\pi(1\otimes \left|e_0\right\rangle \left\langle e_0\right|) \pi(a\otimes \left|e_0\right\rangle \left\langle e_0\right|)h \in H_0$. 
	It is easy to see that  $\rho$ is a representation of $A$.  
	\bppsn
	Let $\pi$ be an irreducible representation of $\Sigma_m^2A$. Define 
	$H_i=\pi(1\otimes p_i)\tilde{\clh}, i \in \bbn_0$. If $\oplus_{i=0}^{\infty}H_i=\tilde{\clh}$, then the map  $\rho$ given by equation $(\ref{chap7-eqn-rho})$ 
	is an irreducible  representation of $A$. Moreover, $\pi$ and $\Sigma_m^2(\rho)$ are unitarily equivalent representation of $\Sigma_m^2A$.
	\eppsn
	\prf
	Let $L_0 \subset H_0$ be a proper invariant subspace of $\rho$. Define
	$L_i= \pi(1\otimes \left|e_i\right\rangle \left\langle e_i\right|)L_0$. Now similar calculation as done in 
	Proposition~\ref{chap7-ppsn-invariant}  shows that $\oplus_{i=0}^{\infty} L_i$  is a proper invariant subspace for $\pi$ which 
	contradicts the fact that $\pi$ is irreducible. Hence $\rho$ is an irreducible representation of $A$. Through the  unitary mapping $H_i$ to $H_0\otimes \{e_i\}$ as given above, one can establish the equivalence between $\pi$ and $\Sigma_m^2(\rho)$.
	\qed \\
	\noindent \textbf{Case 2: $\oplus_{i=0}^{\infty}H_i=\left\{0\right\}$}. \\
	Since $H_i = \left\{0\right\}$, one has $ \pi(1\otimes \left|e_i\right\rangle \left\langle e_i\right|)=0$ for all $i \in \bbn$. 
	Also, $\pi(1\otimes \left|e_i\right\rangle \left\langle e_j\right|)=
	\pi(1\otimes \left|e_i\right\rangle \left\langle e_j\right|)\pi(1\otimes \left|e_j\right\rangle \left\langle e_j\right|)=0$ 
	for all $i,j \in \bbn$. This shows  $\pi(1\otimes k)=0$ and hence $\pi(a\otimes k)=0$ for all $k \in \clk$. Therefore
	$\pi(1\otimes S^*)$ is a unitary operator which implies that  image$\left(\pi\right)$ is a commutative $C^*$-algebra. 
	This shows that $\tilde{\clh}=\bbc$  since all irreducible representations of a commutative $C^*$-algebra are one dimensional.
	Hence $\pi(1\otimes (S^*)^m)=t$ for some $t \in \bbbt$.\\
	For $t \in \bbbt$, define 
	\[
	\vartheta_t^m: \Sigma_m^2A \rightarrow \bbc,
	\]
	as $\vartheta_t^m(a\otimes p)=0$ and $\vartheta_t^m(1\otimes (S^*)^m)=t$. This gives all
	one dimensional  inequivalent irreducible representations of $\Sigma_m^2A$.\\ 
	We now summarize above observations in the following theorem.
	\bthm \label{representation of m-torsioned}
	Let $\left\{\rho_i\right\}_{i \in I}$ be all inequivalent irreducible representations 
	of $A$ where $I$ is some indexing set. Then the 
	set $\left\{\Sigma_m^2(\rho_i)\right\}_{i \in I} \cup \left\{\vartheta_t^m\right\}_{t \in \bbbt}$ gives a complete
	list of mutually inequivalent irreducible representations of $\Sigma_m^2A$.
	\ethm
	The following proposition describes a universal property of $m$-torsioned quantum double suspension. 
	\bppsn \label{universal} Let $\phi:A \longrightarrow B$ be a  homomorphism with $\phi(1)=P$. Let $T \in B$ be an isometry with the defect projection $P$ and  $\nu:M_m(\bbc) \rightarrow B$ be a homomorphism satisfying 
	\begin{IEEEeqnarray}{rCl} \label{eq1}
		\nu(1)=P \mbox{ and } \quad \nu(D)\phi(a)=\phi(a)\nu(D),
	\end{IEEEeqnarray} 
	for all $a \in A$, and  $D \in M_m(\bbc)$. Then there exists a unique homomorphism $\Sigma_m^2 (\phi,\nu,T):\Sigma_m^2A \rightarrow B$ such that 
	$$ \Sigma_m^2 (\phi, \nu,T)(a \otimes p_{ij})=\phi(a) \nu(p_{ij}),  \mbox{ for } 0\leq i,j \leq m-1 $$
	and 
	$$ \Sigma_m^2 (\phi,\nu,T)(1 \otimes (S^*)^m)=T.$$
	Conversely, let  $\psi:\Sigma_m^2A \rightarrow B$ be any unital homomorphism. Define $T=\psi(1\otimes (S^*)^m)$ and $P=1-TT^*$. Then there exist homomorphisms 
	$ \phi:A \longrightarrow B$ and $\nu:M_m(\bbc) \rightarrow B$, and an isometry $T$  satisfying equation (\ref{eq1}) such that 
	\[
	\psi =\Sigma_m^2 (\phi, \nu,T).
	\]
	\eppsn
	\prf We will first prove the  forward implication. 
	Observe that the image of $\Sigma_m^2 (\phi, \nu,T)$ is given only for some elements. 
	We will first see that if there is such a homomorphism then there is a unique way to define it to all elements of $\Sigma_m^2A$. For that, take $i,j \in \bbn_0$. Choose $0 \leq i_0, j_0 \leq m-1$ such that $i=k_1m+ i_0$ and $j =k_2m+ j_0$ for some $k_1,k_2\in \bbn_0$. Then 
	$$\Sigma_m^2 (\phi, \nu,T)(a \otimes p_{ij})=\Sigma_m^2 (\phi, \nu,T)((1\otimes (S^*)^m)^{k_1}(a \otimes p_{i_0j_0})(1\otimes S^m)^{k_2})=(T^*)^{k_1}\phi(a)\nu(p_{i_0j_0})T^{k_2}.$$
	Hence one can define $\Sigma_m^2 (\phi, \nu,T)$ on $A\otimes \clk$. Using  $\Sigma_m^2 (\phi, \nu,T)(1\otimes (S^*)^m)=T$, one can extend it to $\Sigma_m^2 A$. To prove that   $\Sigma_m^2 (\phi, \nu,T)$ is a homomorphism,  it is enough to verify that $\Sigma_m^2 (\phi,\nu,T)$ restricted to the subalgebra $1\otimes \clk$ is a homomorphism, which is straightforward to check. For the converse part, define 
	\begin{IEEEeqnarray*}{rCll}
		\phi &:&A \rightarrow B; & \quad \phi(a)=\psi(a \otimes (p_0+p_1+\cdots +p_m)), \mbox{ for all } a \in A;\\
		\nu&:&M_m(\bbc) \rightarrow B; &\quad  \nu(p_{ij})=\psi(1\otimes p_{ij}), \mbox{ for all } 0\leq i,j \leq m-1.
	\end{IEEEeqnarray*}
	It is a not difficult to check  that $\psi =\Sigma_m^2 (\phi, \nu,T)$.
	\qed

	\bcrlre \label{iteration}
	Let $\phi: A \rightarrow Q(\clh)$ be a unital homomorphism. Assume that $\tilde{\phi}: \Sigma^2_mA \rightarrow Q(\clh\otimes \ell^2(\bbn_0))$ is a homomorphism such that 
	\[
	\tilde{\phi}(a \otimes k)= \phi(a)\otimes k,\, k \in \clk(\ell^2(\bbn_0)), \, \mbox{ and } \, \tilde{\phi}(1\otimes (S^*)^m)= T.
	\]
	Then there exists a unique homomorphism  $\tilde{\tilde{\phi}}: \Sigma^2_m\Sigma^2_mA \rightarrow Q(\clh\otimes \ell^2(\bbn_0) \otimes \ell^2(\bbn_0))$  sending 
	$$a \otimes k_1 \otimes k_2\mapsto \phi(a)\otimes k_1 \otimes k_2,\, 1\otimes (S^*)^m\otimes k_3\mapsto T \otimes k_3,\, \mbox{ and }\,  1\otimes 1 \otimes (S^*)^m \mapsto 1\otimes T,$$
	where $k_1,k_2,k_3 \in \clk(\ell^2(\bbn_0))$.
	\ecrlre
	\prf It follows immediately from Proposition \ref{universal}. 
	\qed

	\bthm \label{K-groups for m-QDS}
	Let $K_0(A)$ and $K_1(A)$ be finitely generated abelian groups with
	generators $\left\{\left[P_i\right]\right\}_{i = 1}^{r}$ and  $\left\{\left[U_i\right]\right\}_{i = 1}^s$,  respectively.
	Let $[1] =[P_1]$ be a free generator of $K_0(A)$.  Then
	$K_0(\Sigma_m^2A)$ is isomorphic to $K_0(A)\oplus \bbz/m\bbz$.  
	The  generators $[1]$,$\left\{[P_i\otimes p]\right\}_{i=2}^{r}$  generate the subgroup $K_0(A)$ of $K_0(\Sigma_m^2A)$ and $[1\otimes p]$ generates
	the component $\bbz/m\bbz$ of   $K_0(\Sigma_m^2A)$. Moreover, the group $K_1(\Sigma_m^2A)$ 
	is isomorphic to $K_1(A)$ with generators $\left\{\left[U_i\otimes p+1-1\otimes p\right]\right\}_{i = 1}^s$.
	\ethm
	\prf  The proof follows by simply chasing the six term exact sequence of $K$-groups associated with the short exact sequence 
	\[
	0 \rightarrow A \otimes \clk \rightarrow \Sigma_{m}^{2}A \rightarrow C(\bbbt) \rightarrow 0.
	\]
	\qed

	\newsection{Topological Invariance}
	
	In this section, we prove $q$-invariance  of $SO_q(5)/SO_q(3)$, $SO_q(4)/SO_q(2)$, and  $SO_q(6)/SO_q(4)$. We assume the terminologies related to homogeneous $C^*$-extension theory that are used in  \cite{Sau-2019aa} without any mention. However, we quickly recall some definitions to make the statement of the results accessible to the reader. For a detailed treatment, we refer the reader to \cite{PimPopVoi-1979aa}.  Two homogeneous $C^*$-extensions  $\tau_1$ and $\tau_2$ of $A$ by $Q(C(Y) \otimes \clk)$  are said to be strongly unitarily equivalent   if there exists unitary $U \in M(C(Y) \otimes \clk)$  such that $[U]\tau_1(a)[U^*]=\tau_2(a)$ for all $a \in A$.  We denote it by $\tau_1 \sim_{su} \tau_2$. The strongly unitarily equivalence class of a $C^*$-extension $\tau$ is denoted by $[\tau]_{su}$. 
	
	\subsection{$q$-invariance of $SO_q(3)$}
	
	\bthm \label{SO(3)}
	For $q, q^{\prime} \in (0,1)$, the $C^*$-algebra $C(SO_q(3))$ is isomorphic to  $C(SO_{q^{\prime}}(3))$. Moreover, we have 
	\[
	C(SO_{q^{\prime}}(3)) \cong \Sigma^2_2C(\bbbt),
	\]
	for all $q \in (0,1)$
	\ethm 
	\prf 
	From \cite{Lan-1998aa}, we have the following short exact sequence; 
	\[
	\eta_q: \quad 0\longrightarrow C(\bbbt) \otimes \clk \xrightarrow{\iota} C(SO_q(3)) \rightarrow C(\bbbt) \longrightarrow  0.
	\]
	We denote the corresponding Busby invariant by the same notation $\eta_q$. Note that $\eta_q(\bbt)=[\bbt \otimes (S^*)^2]$. Hence for any $t \in \bbbt$, we have $\mbox{ev}_t\circ\eta_q(\bbt)=[(S^*)^2]$. Since the spectrum of $[(S^*)^2]$ in $Q(\bbbt)$ is $\bbbt$, it follows that $\mbox{ev}_t\circ\eta_q$ is injective for all $t \in \bbbt$. Thus $\eta_q$ is a homogeneous $C^*$-extension. Therefore we have
	$$[\eta_q]_{su}\in \mathrm{Ext}_{\mathrm{PPV}}(\bbbt,C(\bbbt)) \mbox{  for all }  q\in (0,1).$$ 
	Now from Lemma $3.4$ of \cite{Sau-2019aa}, we have 
	\[
	\mathrm{Ext}_{\mathrm{PPV}}(\bbbt,C(\bbbt))=\{[\phi_m]_{su}: m \in \bbz\},
	\]
	and the middle $C^*$-algebra corresponding to the extension $\phi_m$ is $A_m$, which is the same as $\Sigma^2_mC(\bbbt)$ for $m \notin 0$. Moreover, 
	\[
	K_0(A_m)=\begin{cases}
		\bbz\oplus \bbz/m\bbz & \mbox{ for } m\neq 0, \cr
		\bbz\oplus \bbz & \mbox{ for } m=0.\cr
	\end{cases}
	\]
	Therefore for any $q \in (0,1)$ we have  $[\eta_q]_{su} =[\phi_m]_{su}$ for some $m \in \bbz$. 
	Comparing the $K_0$ groups of the middle $C^*$-algebras, one can conclude that 
	$$ \mbox{ either }
	[\eta_q]_{su} =[\phi_{ 2}]_{su} \, \mbox{ or } \,[\eta_q]_{su} =[\phi_{ -2}]_{su} \mbox{ for all } q \in (0,1).$$  Since the middle $C^*$-algebras $A_2$ and $A_{-2}$ of the extensions $[\phi_{ 2}]_{su}$ and $[\phi_{ -2}]_{su}$, respectively, are the same (see \cite{Sau-2019aa}), it follows that 
	$$C(SO_q(3))\cong A_2=\Sigma^2_2C(\bbbt) \mbox{ for all } q \in (0,1).$$
	This proves the claim.
	\qed
	\bcrlre
	Let $t \in \bbbt$ and $w$ be the only nontrivial element of the Weyl group of $\mathfrak{so}(3)$. Then the set $\{ \pi_{t,w}: t \in \bbbt \} \cup \{\tau_t: t \in \bbbt \}$ is the set of all irreducible representations of $C(SO_q(3))$ up to unitarily equivalence.
	\ecrlre
	\prf From Theorem \ref{representation of m-torsioned}, it follows that any irreducible representation of $\Sigma^2_2C(\bbbt)$ is a restriction  of a irreducible representation of $\Sigma^2C(\bbbt)$ to $\Sigma^2_2C(\bbbt)$.  Using this and the fact that $\Sigma^2_2C(\bbbt)\cong C(SU_q(2))$ and $\Sigma^2_2C(\bbbt)\cong C(SO_q(3))$ (by Theorem \ref{SO(3)}), the claim follows.
	\qed 
	\brmrk
	A proof of $q$-independence of $C(SO_q(3))$ is given  in \cite{HonSzy-2002aa}. Here  our attempt is to     fill the gap   in the argument given in \cite{Lan-1998aa}. 
	\ermrk
	
	\subsection{$q$-invariance of $SO_q(5)/SO_q(3)$}
	We introduce the following notational conventions, used henceforth. For \(1 \leq k \leq 2n\), we denote \(B_k^{2n+1}\) to specify its \(q\)-parametrization as \(B_k^{2n+1}(q)\), and the generators \(x_j\) for  \(1 \leq j \leq k+1\) as \(x_{j,q}\). The limit of \(x_{j,q}\) as \(q \to 0\) will be denoted by \(x_{j,0}\). The \(C^*\)-algebra generated by  \(\{x_{j,0}: 1 \leq j \leq k+1\}\) is denoted by \(B_k^{2n+1}(0)\). First, for each \(1 \leq k \leq n+1\), we show the \(q\)-invariance of the intermediate subalgebras \(B_k^{2n+1}(q)\) of \(C(SO_q(2n+1)/SO_q(2n-1))\). Our idea follows the approach of \cite{Sau-2019aa}.

	\blmma \label{lemma-homogeneous}
	For $1 < k \leq 2n$, the short exact sequence 
	\[
	\chi_{k}: \quad 0\longrightarrow C(\bbbt) \otimes \clk \xrightarrow{\iota} B_k^{2n+1}(q) \xrightarrow{\rho_{k}}  B_{k-1}^{2n+1}(q) \longrightarrow  0.
	\]
	is a unital homogeneous extension of $B_{k-1}^{2n+1}(q)$ by 
	$C(\bbbt)\otimes \clk$. 
	\elmma
	\prf Since  $B_{k+1}^{2n+1}(q)$ is unital, the given extension is unital.
	Let $\tau^k: B_{k}^{2n+1}(q) \rightarrow Q(\bbbt)$ be its Busby invariant.  For $t_0 \in \bbbt$, 
	define $\tau_{t_0}^k : B_{k}^{2n+1}(q) \rightarrow Q$ to be $ev_{t_0} \circ \tau^k$.  Assume that 
	$\mathfrak{I}_{t_0}=\ker(\tau_{t_0}^k)$. We need to prove that $\mathfrak{I}_{t_0}=\left\{0\right\}$ for all $t_0 \in \bbbt$. From the diagram of all representations of $C(SO_q(2n+1))$ described in Section $3$ of \cite{BhuSau-2023aa}, we have the following. \\
	\textbf{Case 1: $k\neq n.$} 
	\begin{IEEEeqnarray*}{rCl}
		\tau_{t_0}(y_k^k) &=& t_0[ \underbrace{q^N \otimes \cdots \otimes q^N}_{(k-1)\mbox{ copies }} \otimes \sqrt{1-q^{2N}}S^*]
		\\
		\tau_{t_0}(y_k^k(y_k^k)^*) &=& t_0[ \underbrace{q^{2N} \otimes \cdots \otimes q^{2N}}_{(k-1)\mbox{ copies }} \otimes (1-q^{2N})]=[ \underbrace{q^{2N} \otimes \cdots \otimes q^{2N}}_{(k-1)\mbox{ copies }} \otimes 1]\\
		\tau_{t_0}(y_k^k\mathbbm{1}_{\{y_k^k(y_k^k)^*=1\}}) &=&t_0[\underbrace{p\otimes \cdots \otimes p}_{ (k-1) \mbox{ copies }} \otimes \sqrt{1-q^{2N}}S^*] 
		=t_0[\underbrace{p\otimes \cdots \otimes p}_{ (k-1) \mbox{ copies }} \otimes S^*]. 
	\end{IEEEeqnarray*}
	\textbf{Case 2: $k= n.$} 
	\begin{IEEEeqnarray*}{rCl}
		\tau_{t_0}(y_k^k) &=& t_0[ \underbrace{q^N \otimes \cdots \otimes q^N}_{(k-1)\mbox{ copies }} \otimes \sqrt{1-q^{2N}}(S^2)^*]
		\\
		\tau_{t_0}(y_k^k(y_k^k)^*) &=& t_0[ \underbrace{q^{2N} \otimes \cdots \otimes q^{2N}}_{(k-1)\mbox{ copies }} \otimes (1-q^{2N})]=[ \underbrace{q^{2N} \otimes \cdots \otimes q^{2N}}_{(k-1)\mbox{ copies }} \otimes 1]\\
		\tau_{t_0}(y_k^k\mathbbm{1}_{\{y_k^k(y_k^k)^*=1\}}) &=&t_0[\underbrace{p\otimes \cdots \otimes p}_{ (k-1) \mbox{ copies }} \otimes \sqrt{1-q^{2N}}(S^2)^*] 
		=t_0[\underbrace{p\otimes \cdots \otimes p}_{ (k-1) \mbox{ copies }} \otimes (S^2)^*]. 
	\end{IEEEeqnarray*}
	First, observe that in both cases,  $y_k^k \notin \mathfrak{I}_{t_0}^k$. Thus, the only primitive ideals that contains $\mathfrak{I}_{t_0}^k$ are  maximal ideals $I_t$, and hence 
	$$\mathfrak{I}_{t_0}^k =\cap_{I_t \subset \mathfrak{I}_{t_0}^k } I_t= I_F(\bbbt)\otimes \clk$$ for some closed 
	subset $F$ of $\bbbt$ where $I_F(\bbbt)$ is the closed ideal  of all continuous functions on $\bbbt$ vanishing on $F$. 
	Define the homomorphisms  $$\eta_1: C(\bbbt) \rightarrow Q(\ell^2(\bbn_0)); \quad \bbt \mapsto [S^*],\quad \mbox{ and } \quad \eta_2: C(\bbbt) \rightarrow Q(\ell^2(\bbn_0)); \quad \bbt \mapsto  [(S^*)^2].$$
	Both the  maps $\eta_1$ and $\eta_2$ are injective as the spectrum of $[S^*]$  and $ [(S^2)^*] $ are $\bbbt$.
	Hence  in both cases, we have 
	$$\tau_{t_0}(f(t)\otimes p^{\otimes (k-1)} ) \neq 0$$ 
	for any nonzero function $f$ on $\bbbt$, which further implies that $F=\bbbt$ and $\mathfrak{I}_{t_0}=\left\{0\right\}$. 
	
	\qed

	\blmma 
	Let $q \in (0,1)$ and  $1 \leq k \leq n$. Then one has
	$B_{k}^{2n+1}(q) =B_{k}^{2n+1}(0)$.
	\elmma 
	
	\begin{proof}
		As described in Figure 1, observe that $B_k^{2n+1}(q) = \pi_k\left(C(S_q^{2k+1})\right)$, where $\pi_k$ is the faithful representation of $C(S_q^{2k+1})$ described in \cite{PalSun-2010aa}. Since $B_k^{2n+1}(0) = \pi_k(C(S_0^{2k+1}))$, the lemma follows from \cite[Lemma 3.2]{PalSun-2010aa}.
	\end{proof}
	
	\begin{lmma}
		For all $q\in (0,1)$, we have $B_{n+1}^{2n+1}(q) = B_{n+1}^{2n+1} (0)$.
	\end{lmma}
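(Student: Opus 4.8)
The plan is to recognise $B_{n+1}^{2n+1}(q)$ as the $2$-torsioned quantum double suspension $\Sigma_2\bigl(B_n^{2n+1}(q)\bigr)$ of $B_n^{2n+1}(q)$ --- a concrete operation on represented $C^{*}$-algebras in the sense of \cite{ChaSau-2018ab} --- and then feed in the preceding lemma, which gives $B_n^{2n+1}(q)=B_n^{2n+1}(0)$. This continues the pattern of the $k\le n$ case, where $B_k^{2n+1}(q)$ was the image of the odd quantum sphere $C(S_q^{2k+1})$ and the equality with the $q=0$ version was imported from the sphere literature.

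First I would read off the generators from the diagram of $\phi_{\omega_{n+1}}$, where $\omega_{n+1}=s_1s_2\cdots s_n$, so that $B_{n+1}^{2n+1}(q)\subseteq C(\bbbt)\otimes\scrt^{\otimes n}$. The new, $n$-th, Toeplitz slot is governed by the reflection $s_n$ in the short root $\alpha_n$ of type $B$, and there the generators carry a shift by two, $(S^{*})^{2}$; this is exactly what produces the $\bbz/2\bbz$-summand of $K_0(B_{n+1}^{2n+1})$ in Theorem \ref{K-groups-B}. Setting $x_{j,q}=\eta_{\omega_{n+1}}(v^{2n+1}_j)$, the claim to verify is that, modulo the ideal $C(\bbbt)\otimes\clk^{\otimes n}$ --- this is the $k=n+1$ instance of Lemma \ref{shortexactseq} --- these generators reproduce a copy of $1\otimes B_n^{2n+1}(q)$ on the first $n-1$ slots together with a single distinguished generator built from $(S^{*})^{2}$ and the rank-one projection $p$ onto $e_0\otimes\cdots\otimes e_0$; that is precisely the description of $\Sigma_2\bigl(B_n^{2n+1}(q)\bigr)$ relative to $p$, and $p$ is manifestly independent of $q$.

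Then I would invoke two structural facts about $\Sigma_2$. First, on a concretely realised unital $C^{*}$-subalgebra $A\subseteq\cll(H)$ together with a fixed rank-one projection $p_0\in\cll(H)$, the $2$-torsioned quantum double suspension is a concrete subalgebra of $\cll(H\otimes\ell^2(\bbn_0))$, generated by $\{a\otimes 1:a\in A\}$ and one explicit $(S^{*})^{2}$-type element; hence it depends only on the pair $(A,p_0)$, and if $A=A'$ then $\Sigma_2$ of $(A,p_0)$ and of $(A',p_0)$ are literally the same subalgebra. Second, the whole computation of the previous paragraph runs verbatim at $q=0$: the matrix entries of $x_{j,0}$ are the operator-norm limits of those of $x_{j,q}$, so the $q=0$ analogues of Lemma \ref{shortexactseq} and of the generator description hold by specialisation, giving $B_{n+1}^{2n+1}(0)=\Sigma_2\bigl(B_n^{2n+1}(0)\bigr)$ with respect to the same $p$. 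Granting all this, the preceding lemma gives $B_n^{2n+1}(q)=B_n^{2n+1}(0)$ with the distinguished projection $p$ common to both realisations, so
\[
B_{n+1}^{2n+1}(q)=\Sigma_2\bigl(B_n^{2n+1}(q)\bigr)=\Sigma_2\bigl(B_n^{2n+1}(0)\bigr)=B_{n+1}^{2n+1}(0),
\]
which is the assertion.

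The hard part will be the identification in the second paragraph: showing rigorously, straight from the representation diagram for $\omega_{n+1}$, that $B_{n+1}^{2n+1}(q)$ is exactly $\Sigma_2\bigl(B_n^{2n+1}(q)\bigr)$ --- i.e. that the $x_{j,q}$ produce, modulo $C(\bbbt)\otimes\clk^{\otimes n}$, the copy $1\otimes B_n^{2n+1}(q)$ plus a single shift-by-two generator and no extra relations --- together with establishing the $k=n+1$ case of Lemma \ref{shortexactseq} and its $q=0$ counterpart. Once that structural identification is in place the remainder is formal, being functoriality of $\Sigma_2$ on concrete subalgebras plus the preceding lemma.
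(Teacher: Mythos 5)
There is a genuine gap, and it sits exactly where you park "the hard part". The content of this lemma is not the formal chain $\Sigma_2(B_n^{2n+1}(q))=\Sigma_2(B_n^{2n+1}(0))$; it is the assertion that replacing the manifestly $q$-dependent distinguished generator $x_{n+1,q}$ (which, up to a compact correction, has the form $t\otimes (q^N)^{\otimes (n-1)}\otimes\sqrt{1-q^{2N}}\,(S^*)^2$) by its $q\to 0$ limit $t\otimes p^{\otimes (n-1)}\otimes (S^*)^2$ does not change the generated $C^*$-subalgebra of $C(\mathbb{T})\otimes\mathscr{T}^{\otimes n}$. Your "structural fact" that the $2$-torsioned quantum double suspension depends only on the pair $(A,p_0)$ quietly assumes this: a priori the two suspensions you want to compare are built from \emph{different} shift-by-two elements, one for each value of $q$, and showing they generate the same algebra requires exhibiting the $q$-deformed element inside the algebra generated by $B_n^{2n+1}(0)\otimes 1$ and the undeformed one (and conversely), e.g.\ by cutting with spectral projections and summing series in $q^{cN}$. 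That is precisely the "long but straightforward computation using the diagram and continuous functional calculus" which constitutes the paper's proof, and which is written out in full for the analogous statement $D^4_3(q)=D^4_3(0)$ in the next subsection; your proposal defers it rather than performs it. A further point: the operation $\Sigma_2$ is never defined in this paper (it occurs only in the keywords), so to run your argument you would first have to define it and prove the well-definedness/independence claims you invoke --- and doing so honestly again reduces to the same functional-calculus computation. You would also need to verify, not just assert, the $k=n+1$ identification of $B_{n+1}^{2n+1}(q)$ as generated by $B_n^{2n+1}(q)\otimes 1$ together with one shift-by-two element (the coefficient attached to the short root $\alpha_n$ involves the $q^{1/2}$-parameters, so the exact form of $x_{n+1,q}$ needs care, even though its limit modulo $C(\mathbb{T})\otimes\clk^{\otimes n}$ is indeed of $(S^*)^2$ type).

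If the structural identification were established, your route would be a legitimate and arguably more conceptual packaging than the paper's terse computation: it would explain the lemma as stability of a torsioned double-suspension construction under $q\to 0$, with the previous lemma ($B_n^{2n+1}(q)=B_n^{2n+1}(0)$, imported from the quantum-sphere picture) as input. But as written, the proposal replaces the required estimate-level work by an appeal to an undefined operation and an unproved invariance property, so it does not yet constitute a proof.
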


	\begin{proof}
		For each \(1 \leq j \leq n+2\), we have
		\begin{align*}
			\eta_{\omega_{n+1}}(v_{j'}^{2n+1}) &= \sum_{l=1}^{n-1} (-1)^{l-1} t \otimes (q^N)^{\otimes (l-1)} \otimes S^* \sqrt{1 - q^{2N+2}} \otimes 1^{\otimes (n-l-1)} \otimes \pi_{s_n}(v_{j'}^{l'}) \\
			&\quad + (-1)^{n-1} t \otimes (q^N)^{\otimes (n-1)} \otimes \pi_{s_n}(v_{j'}^{n'}).
		\end{align*}
		
		Therefore, \(B_{n+1}^{2n+1}(q)\) is generated by
		\begin{align*}
			x_{j,q} &= (-1)^{j-1} t \otimes (q^N)^{\otimes (j-1)} \otimes \sqrt{1 - q^{2N}} S^* \otimes 1^{\otimes (n-j)}, \quad 1 \leq j \leq n-1, \\
			x_{n,q} &= (-1)^{n-1} t \otimes (q^N)^{\otimes (n-1)} \otimes \sqrt{(1 - q^{2N})(1 - q^{2(N-1)})} (S^*)^2, \\
			x_{n+1,q} &= (-1)^{n-1} t \otimes (q^N)^{\otimes (n-1)} \otimes \sqrt{(1 + q^2)(1 - q^{2N})} q^{N-1} S^*, \\
			x_{n+2,q} &= (-1)^{n-1} t \otimes (q^N)^{\otimes (n-1)} \otimes q^{2N}.
		\end{align*}
		
		Moreover, \(B_{n+1}^{2n+1}(0)\) is generated by
		\begin{align*}
			x_{j,0} &= (-1)^{j-1} t \otimes p^{\otimes (j-1)} \otimes S^* \otimes 1^{\otimes (n-j)}, \quad 1 \leq j \leq n-1, \\
			x_{n,0} &= (-1)^{n-1} t \otimes p^{\otimes (n-1)} \otimes (S^*)^2, \\
			x_{n+1,0} &= (-1)^{n-1} t \otimes p^{\otimes (n-1)} \otimes p_{10}, \\
			x_{n+2,0} &= (-1)^{n-1} t \otimes p^{\otimes n}.
		\end{align*}
		
		We will prove the lemma by establishing the following claims.
		
		\begin{claim}
			For each $j\in\{1,2,\ldots,n-1\}$, we have $1\otimes p^{\otimes j}\otimes 1^{\otimes (n-j)} \in B_{n+1}^{2n+1}(q) \cap B_{n+1}^{2n+1}(0)$.
		\end{claim}
		
		\begin{claim}
			If \(D\) is a diagonal operator on \(\ell^2(\mathbb{N}_0)\), specified by \(De_i = d(i)e_i\), \(i \in \mathbb{N}_0\) where \(\lim_{i \rightarrow \infty} d(i)\) is either \(0\) or \(1\), then for each \(1 \leq j \leq n\),
			\[ 1 \otimes p^{\otimes (j-1)} \otimes D \otimes 1^{\otimes (n-j)} \in B_{n+1}^{2n+1}(q) \cap B_{n+1}^{2n+1}(0). \]
		\end{claim}
		
		\begin{claim}
			For each $j\in\{1,2,\ldots,n+2\}$, we have $x_{j,0}\in B_{n+1}^{2n+1}(q)$.
		\end{claim}
		
		\begin{claim}
			For each $j\in\{1,2,\ldots,n+2\}$, we have $x_{j,q}\in B_{n+1}^{2n+1}(0)$.
		\end{claim}
		
		\item[\textbf{Proof of Claim 1:}] Note that for each $j\in\{1,2,\ldots, n-1\}$ we have $1\otimes p^{\otimes j}\otimes 1^{\otimes (n-j)} = x_{j+1,0}^* x_{j+1,0}\in B_{n+1}^{2n+1}(0)$. For the other inclusions, it suffices to show for each \(j \in \{1, 2, \ldots, n-1\}\) that
		\[ 1 \otimes (q^{2N})^{\otimes j} \otimes 1^{\otimes (n-j)} \in B_{n+1}^{2n+1}(q), \] which we prove by using backward induction on $j$.
		
		For  $j = n-1$, 
		\begin{align*}
			&x_{n,q}^* x_{n,q} = 1 \otimes (q^{2N})^{\otimes (n-1)} \otimes (1 - q^{2(N+2)})(1 - q^{2(N+1)}) \\
			&\implies 1 \otimes (q^{2N})^{\otimes (n-1)} \otimes 1 - x_{n,q}^* x_{n,q} \in C(\mathbb{T}) \otimes \mathcal{K}^{\otimes n} \subseteq B_{n+1}^{2n+1}(q) \\
			&\implies 1 \otimes (q^{2N})^{\otimes (n-1)} \otimes 1 \in B_{n+1}^{2n+1}(q).
		\end{align*}
		
		Now, if \(1 \otimes (q^{2N})^{\otimes \jmath} \otimes 1^{\otimes (n-\jmath)} \in B_{n+1}^{2n+1}(q)\) for some \(1 < \jmath \leq n-1\), then
		\[
		1 \otimes (q^{2N})^{\otimes (\jmath -1)} \otimes 1^{\otimes (n-\jmath +1)} = q^2 \left(1 \otimes (q^{2N})^{\otimes \jmath} \otimes 1^{\otimes (n-\jmath)} \right) + x_{\jmath,q}^* x_{\jmath,q} \in B_{n+1}^{2n+1}(q),
		\]
		hence the proof follows.
		
		\item[\textbf{Proof of Claim 2:}] It suffices to prove for each \(1 \leq j \leq n\) and \(i \in \mathbb{N}_0\) that 
		\[ 1 \otimes p^{\otimes (j-1)} \otimes p_i \otimes 1^{\otimes (n-j)} \in B_{n+1}^{2n+1}(q) \cap B_{n+1}^{2n+1}(0). \]
		
		Note that
		\[
		1 \otimes p^{\otimes (j-1)} \otimes p_i \otimes 1^{\otimes (n-j)} =
		\begin{cases}
			x_{j+1,0}^* x_{j+1,0} & \text{if } i = 0, \, 1 \leq j \leq n\\
			x_{j,0}^i x_{j+1,0}^* x_{j+1,0} (x_{j,0}^*)^i & \text{if } i \in \mathbb{N}, \, 1 \leq j \leq n-1\\
			x_{n+1,0} x_{n+1,0}^* & \text{if } i = 1, \, j = n\\
			x_{n,0}^{\left\lfloor \frac{i}{2} \right\rfloor} x_{n+1,0} x_{n+1,0}^* (x_{n,0}^*)^{\left\lfloor \frac{i}{2} \right\rfloor} & \text{if } i > 1, \, j = n
		\end{cases},
		\]
		hence for each \(1 \leq j \leq n\) and \(i \in \mathbb{N}_0\) we get \( 1 \otimes p^{\otimes (j-1)} \otimes p_i \otimes 1^{\otimes (n-j)} \in B_{n+1}^{2n+1}(0) \).

		Let \(i \in \mathbb{N}\). Then \(1 \otimes p^{\otimes (n-1)} \otimes p_i \in C(\mathbb{T}) \otimes \mathcal{K}^{\otimes n} \subseteq B_{n+1}^{2n+1}(q)\). Moreover, for \(1 \leq j \leq n-1\),
		\[
		1 \otimes p^{\otimes (j-1)} \otimes p_i \otimes 1^{\otimes (n-j)} = \frac{1}{\prod_{\imath = 1}^i \left(1- q^{2\imath}\right)} \cdot x_{j,q}^i \left(1 \otimes p^{\otimes j} \otimes 1^{\otimes (n-j)} \right) \left(x_{j,q}^* \right)^i \in B_{n+1}^{2n+1}(q).
		\]
		
		\item[\textbf{Proof of Claim 3:}] The proof follows by observing that 
		
		\[ x_{j,0} = \left(1 \otimes p^{\otimes (j-1)} \otimes D_1 \otimes 1^{\otimes (n-j)}\right) x_{j,q} \quad \text{ for } 1\leq j\leq n-1,\]
		\[x_{n,0} =\left(1\otimes p^{\otimes (n-1)}\otimes D_2\right) x_{n,q}, \,x_{n+1,0} = \left(1\otimes p^{\otimes (n-1)}\otimes D_3\right) x_{n+1,q}, \,x_{n+2,0} =\left(1\otimes p^{\otimes n}\right) x_{n+2,q},\]
		
		where the diagonal operators $D_1$, $D_2$ and $D_3$ are defined by
		\[D_1(e_i)=\begin{cases}
			0 &\text{ if } i=0\\
			\frac{1}{\sqrt{1-q^{2i}}}e_i &\text{ if } i\geq 1 
		\end{cases}, \quad D_2(e_i) =\begin{cases}
			0 &\text{ if } i=0,1\\
			\frac{1}{\sqrt{\left(1-q^{2i}\right)\left(1-q^{2(i-1)}\right)}}e_i &\text{ if } i\geq 2
		\end{cases},\]
		\[D_3(e_i) =\begin{cases}
			0 &\text{ if } i\neq 1\\
			\frac{1}{\sqrt{1-q^4}}e_i &\text{ if } i=1
		\end{cases}.\]
		
		\item[\textbf{Proof of Claim 4:} ] Note that 
		$$t\otimes p^{\otimes (n-1)}\otimes p_{(i+1)i}=\begin{cases}
			x_{n+1,0} &\text{ if } i=0\\
			x_{n,0} x_{n+2,0} x_{n+1,0}^* &\text{ if } i=1\\
			x_{n,0}^{\frac{i+1}{2}} x_{n+2,0} x_{n+1,0}^* \left(x_{n,0}^*\right)^{\frac{i-1}{2}} &\text{ if } i \text{ is odd},\,i\geq 3\\
			x_{n,0}^{\frac{i}{2}} x_{n+1,0}\left(x_{n,0}^*\right)^{\frac{i}{2}} &\text{ if } i \text{ is even},\,i\geq 1
		\end{cases},$$
		hence $z_{n+1,q}:= (-1)^{n-1} t\otimes p^{\otimes (n-1)}\otimes q^{N-1} S^* \in B_{n+1}^{2n+1}(0)$.
		
		We now claim for each $1\leq j\leq n+2$ that $y_{j,q}\in B_{n+1}^{2n+1}(0)$, where
		
		$$y_{j,q} := \begin{cases}
			(-1)^{j-1} t\otimes p^{\otimes (j-1)}\otimes \sqrt{1-q^{2N}} S^*\otimes 1^{\otimes (n-j)} &\text{ if } 1\leq j\leq n-1\\
			(-1)^{n-1} t\otimes p^{\otimes (n-1)} \otimes \sqrt{\left(1-q^{2N}\right)\left(1-q^{2(N-1)}\right)} (S^*)^2 &\text{ if } j=n\\
			(-1)^{n-1} t\otimes p^{\otimes (n-1)} \otimes \sqrt{(1+q^2)\left(1-q^{2N}\right)} q^{N-1} S^* &\text{ if } j=n+1\\
			(-1)^{n-1} t\otimes p^{\otimes (n-1)} \otimes q^{2N} &\text{ if } j=n+2
		\end{cases}.$$
		
		Observe that 
		$$y_{j,q} = \begin{cases}
			\left(1\otimes p^{\otimes (j-1)}\otimes \sqrt{1-q^{2N}} \otimes 1^{\otimes (n-j)}\right) x_{j,0} &\text{ if } 1\leq j\leq n-1\\
			\left(1\otimes p^{\otimes (n-1)} \otimes \sqrt{\left(1-q^{2N}\right)\left(1-q^{2(N-1)}\right)}\right) x_{j,0} &\text{ if } j=n\\
			\sqrt{1+q^2}\left(1\otimes p^{\otimes (n-1)}\otimes \sqrt{1-q^{2N}}\right) z_{n+1,q} &\text{ if } j=n+1
		\end{cases},$$
		
		hence for each $1\leq j\leq n+1$ we have $y_{j,q} \in B_{n+1}^{2n+1}(0)$. Moreover,
		$$(-1)^{n-1} t\otimes p^{\otimes (n-1)}\otimes p_i = \begin{cases}
			x_{n+2,0} &\text{ if } i=0\\
			\frac{1}{q^{(i-1)i}} z_{n+1,q}^i x_{n+2,0} \left(z_{n+1,q}^*\right)^i &\text{ if } i\in\mathbb{N}
		\end{cases}$$
		belongs to $B_{n+1}^{2n+1}(0)$, and thus $y_{n+2,q}\in B_{n+1}^{2n+1}(0)$.
		
		Now, we have $x_{1,q}=y_{1,q}\in B_{n+1}^{2n+1}(0)$. For each $2\leq j\leq n-1$, and for each $i_1, i_2,\ldots, i_{j-1}\in\mathbb{N}$ we get
		\begin{align*}
			&(-1)^{j-1} t\otimes p_{i_1}\otimes p_{i_2}\otimes \cdots \otimes p_{i_{j-1}}\otimes \sqrt{1-q^{2N}} S^*\otimes 1^{\otimes (n-j)}\\
			&= \left(x_{1,0}^{i_1} x_{2,0}^{i_2}\cdots x_{j-1,0}^{i_{j-1}}\right) y_{j,q} \left(x_{j-1,0}^*\right)^{i_{j-1}}\left(x_{j-2,0}^*\right)^{i_{j-2}}\cdots \left(x_{1,0}^*\right)^{i_1} \in B_{n+1}^{2n+1}(0),
		\end{align*}
		and this implies that $x_{j,q}\in B_{n+1}^{2n+1}(0)$.
		
		For $j=n$, and for each $i_1, i_2, \ldots, i_{n-1}\in\mathbb{N}$ we get 
		\begin{align*}
			&(-1)^{n-1} t\otimes p_{i_1}\otimes p_{i_2}\otimes \cdots \otimes p_{i_{n-1}}\otimes \sqrt{\left(1-q^{2N}\right)\left(1-q^{2(N-1)}\right)}(S^*)^2\\
			&= \left(x_{1,0}^{i_1} x_{2,0}^{i_2}\cdots x_{n-1,0}^{i_{n-1}}\right) y_{n,q} \left(x_{n-1,0}^*\right)^{i_{n-1}} \left(x_{n-2,0}^*\right)^{i_{n-2}} \cdots \left(x_{1,0}^*\right)^{i_1},
		\end{align*}
		which then implies that $x_{n,q}\in B_{n+1}^{2n+1}(0)$. The proof for $x_{j,q}\in B_{n+1}^{2n+1}(0)$, $j=n+1,n+2$ is similar to the above proof for $j=n$.
	\end{proof}

	We recall that a nuclear, separable, unital $C^*$-algebra $A$ has the homotopy invariance property if for every finite-dimensional metric space $X$ and $[\tau]\in \mathrm{Ext}_{\mathrm{PPV}}(X\times [0,1],A)$, the condition $i_0^*[\tau] = 0$ implies $i_1^*[\tau] = 0$, where $i_t: X \rightarrow X\times [0,1]$ is the injection $i_t(x) = (x,t)$ \cite[Definition 5.6]{PimPopVoi-1979aa}.
	
	\begin{lmma}\label{nuclearity and homotopy invariance}
		For each $q \in (0,1)$ and $1 \leq k \leq 2n$, the $C^*$-algebra $B_k^{2n+1}(q)$ is nuclear and has the homotopy invariance property.
	\end{lmma}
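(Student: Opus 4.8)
The plan is to prove nuclearity and the homotopy invariance property for $B_k^{2n+1}(q)$ by induction on $k$, using the chain of short exact sequences $\chi_k$ established in Lemma \ref{shortexactseq}. For the base case $k=1$, we have $B_1^{2n+1}(q) = C(\bbbt)$, which is commutative, hence nuclear, and which has the homotopy invariance property by \cite[Section 5]{PimPopVoi-1979aa} (indeed, $C(\bbbt) = C(S^1)$ is a commutative $C^*$-algebra of a finite-dimensional space, for which the property is classical; alternatively one can cite that $\operatorname{Ext}_{PPV}$ of the circle behaves well). For the inductive step, suppose $B_{k-1}^{2n+1}(q)$ is nuclear and has the homotopy invariance property.

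For nuclearity, I would invoke the standard fact that nuclearity is preserved under extensions: if $0 \to I \to A \to A/I \to 0$ is exact with $I$ and $A/I$ nuclear, then $A$ is nuclear (see \cite{Bla-1998aa}). Applying this to $\chi_k$, the ideal is $C(\bbbt)\otimes\clk$, which is nuclear (a tensor product of a commutative $C^*$-algebra with the compacts), and the quotient $B_{k-1}^{2n+1}(q)$ is nuclear by the induction hypothesis; hence $B_k^{2n+1}(q)$ is nuclear. Separability and unitality are clear since all algebras in sight are separable and $B_k^{2n+1}(q)$ contains the unit of $C(\bbbt)\otimes\scrt^{\otimes(k-1)}$.

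For the homotopy invariance property, the key input is Lemma \ref{lemma-homogeneous}, which tells us that $\chi_k$ is a \emph{unital homogeneous} extension of $B_{k-1}^{2n+1}(q)$ by $C(\bbbt)\otimes\clk$. The strategy is to use the long exact sequence in $\operatorname{Ext}_{PPV}$ associated with such an extension, together with the behaviour of the homotopy invariance property under extensions as developed in \cite[Section 5]{PimPopVoi-1979aa}: roughly, if $I$ and $A/I$ both have the property and the connecting data is compatible, then $A$ has the property. More precisely, given $[\tau]\in\operatorname{Ext}_{PPV}(X\times[0,1], B_k^{2n+1}(q))$ with $i_0^*[\tau]=0$, one pulls back along the inclusion $C(\bbbt)\otimes\clk \hookrightarrow B_k^{2n+1}(q)$ and pushes forward along the quotient map $B_k^{2n+1}(q)\to B_{k-1}^{2n+1}(q)$; the induction hypothesis handles the quotient part, and the fact that $C(\bbbt)\otimes\clk$ has the homotopy invariance property (again because $C(\bbbt)$ does and the property is stable under tensoring with $\clk$) handles the ideal part. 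A diagram chase in the six-term/long exact sequence then forces $i_1^*[\tau]=0$.

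The main obstacle I anticipate is making the diagram chase in $\operatorname{Ext}_{PPV}$ fully rigorous: unlike $K$-theory, $\operatorname{Ext}_{PPV}$ is only a semigroup in general, and the exact-sequence machinery for it (as opposed to Kasparov's $\operatorname{Ext}$) requires the homogeneity hypothesis in an essential way — this is precisely why Lemma \ref{lemma-homogeneous} was proved first. One must check that the relevant maps in the Pimsner--Popa--Voiculescu exact sequence are defined and that the homotopy invariance property genuinely propagates along it; the cleanest route is to cite the permanence results of \cite[Section 5]{PimPopVoi-1979aa} verbatim rather than re-derive them, noting that all hypotheses (nuclear, separable, unital, and the extension homogeneous) have been verified in the preceding steps.
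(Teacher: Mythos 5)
Your proposal takes essentially the same route as the paper: induction on $k$ along the short exact sequences $\chi_k$, with nuclearity passed through the extension and the homotopy invariance property deduced from the permanence results of \cite[Section 5]{PimPopVoi-1979aa} applied to the ideal $C(\bbbt)\otimes\clk$ and the quotient $B_{k-1}^{2n+1}(q)$. The only differences are cosmetic: the paper justifies the building blocks ($C(\bbbt)$, $C(\bbbt)\otimes\clk$ and its unitization) via quasidiagonality \cite[Proposition 5.5]{PimPopVoi-1979aa}, and the homogeneity of $\chi_k$ from Lemma \ref{lemma-homogeneous} is not actually needed here (it is used later, in Theorem \ref{Theorem 5.6}), so your appeal to it is harmless but inessential.
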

	
	\begin{proof}
		The $C^*$-algebra $B_1^{2n+1}(q) = C(\mathbb{T})$, $C(\mathbb{T}) \otimes \mathcal{K}$ and its unitization are nuclear and quasidiagonal, hence possessing the homotopy invariance property \cite[Proposition 5.5]{PimPopVoi-1979aa}. Using the short exact sequence for $\chi_k$, $1 < k \leq 2n$, we then inductively show that $B_k^{2n+1}(q)$, for $1 \leq k \leq 2n$, are also nuclear and have the homotopy invariance property.
	\end{proof}
	
	\begin{thm}\label{Theorem 5.6}
		For each \(q, q' \in (0,1)\), the \(C^*\)-algebras \(B_{n+2}^{2n+1}(q)\) and \(B_{n+2}^{2n+1}(q')\) are isomorphic.
	\end{thm}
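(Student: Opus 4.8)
The plan is to adapt the blueprint of \cite{Sau-2019aa} to the present situation. Recall from the lemmas above that $B_{n+1}^{2n+1}(q)=B_{n+1}^{2n+1}(0)=:A$ for every $q\in(0,1)$, so the short exact sequence $\chi_{n+2}$ of Lemma \ref{shortexactseq} reads
\[
\chi_{n+2}^{q}:\quad 0\longrightarrow C(\bbbt)\otimes\clk \xrightarrow{i} B_{n+2}^{2n+1}(q) \xrightarrow{\rho_{n+2}} A\longrightarrow 0,
\]
with \emph{both} the ideal and the quotient independent of $q$. By Lemma \ref{lemma-homogeneous} (applied with $k=n+2$) this is a unital homogeneous extension, and since $A$ is nuclear, separable and unital (Lemma \ref{nuclearity and homotopy invariance}) it determines a class $[\chi_{n+2}^{q}]_{su}\in\operatorname{Ext}_{PPV}(\bbbt,A)$. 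As a strong unitary equivalence of extensions induces an isomorphism of their middle $C^{*}$-algebras, it will suffice to show that $[\chi_{n+2}^{q}]_{su}$ is independent of $q$.

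For this I would first observe that $q\mapsto\chi_{n+2}^{q}$ is a norm-continuous family of unital homogeneous extensions of $A$ by $C(\bbbt)\otimes\clk$: the generators $x_{j,q}$ of $B_{n+2}^{2n+1}(q)$ are the explicit operators appearing in the diagram of $\rho_{\omega_{n+2}}$, their defining coefficients depend norm-continuously on $q$ inside the fixed $C^{*}$-algebra $C(\bbbt)\otimes\scrt^{\otimes(n+1)}$, and the copy of $C(\bbbt)\otimes\clk$ realizing the ideal is the same for every $q$. Fixing $q_{0}<q_{1}$ in $(0,1)$, I would then glue this family over $[q_{0},q_{1}]$ into a single unital homogeneous extension $\widetilde{\chi}$ of $A$ by $C(\bbbt\times[q_{0},q_{1}])\otimes\clk$ whose pull-back along $i_{t}\colon\bbbt\hookrightarrow\bbbt\times[q_{0},q_{1}]$, $z\mapsto(z,t)$, is strongly equivalent to $\chi_{n+2}^{t}$; that $\widetilde{\chi}$ is homogeneous is checked by the same corona-algebra computation as in Lemma \ref{lemma-homogeneous}, performed uniformly in $t$. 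Writing $p\colon\bbbt\times[q_{0},q_{1}]\to\bbbt$ for the projection, the class $[\widetilde{\chi}]-p^{*}i_{q_{0}}^{*}[\widetilde{\chi}]$ in $\operatorname{Ext}_{PPV}(\bbbt\times[q_{0},q_{1}],A)$ vanishes under $i_{q_{0}}^{*}$, hence, by the homotopy invariance property of $A$ (Lemma \ref{nuclearity and homotopy invariance}), also under $i_{q_{1}}^{*}$; since $p\circ i_{q_{0}}=p\circ i_{q_{1}}=\operatorname{id}_{\bbbt}$, this gives $i_{q_{0}}^{*}[\widetilde{\chi}]=i_{q_{1}}^{*}[\widetilde{\chi}]$, i.e. $[\chi_{n+2}^{q_{0}}]_{su}=[\chi_{n+2}^{q_{1}}]_{su}$. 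As $q_{0},q_{1}\in(0,1)$ were arbitrary, $[\chi_{n+2}^{q}]_{su}$ does not depend on $q$, and the theorem follows.

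The hard part will be the gluing step: assembling the fibres $B_{n+2}^{2n+1}(q)$, $q\in[q_{0},q_{1}]$, into a $C^{*}$-algebra carrying a \emph{global} ideal $C(\bbbt\times[q_{0},q_{1}])\otimes\clk$ with quotient exactly $A$ (rather than $C([q_{0},q_{1}])\otimes A$), and verifying that the induced fibrewise Busby maps into the corona algebra stay injective uniformly in $t$, i.e. the uniform analogue of Lemma \ref{lemma-homogeneous}. The norm-continuity of $q\mapsto x_{j,q}$ on the compact subinterval $[q_{0},q_{1}]\subset(0,1)$ is routine (the coefficients in the diagram of $\rho_{\omega_{n+2}}$ are Lipschitz functions of $q$ there), so the genuine work is the $C^{*}$-algebraic bookkeeping of the continuous field together with this uniform homogeneity check. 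An alternative, also in the spirit of \cite{Sau-2019aa}, would bypass the homotopy: show that inside $\operatorname{Ext}_{PPV}(\bbbt,A)$ the isomorphism class of the middle $C^{*}$-algebra of such an extension is determined by its $K$-theory — proved by inducting along the filtration $\chi_{2},\dots,\chi_{n+1}$ of $A$ and, again, invoking the homotopy invariance property — and then use Theorem \ref{K-groups-B}, which yields $K_{0}(B_{n+2}^{2n+1}(q))\cong\bbz\oplus\bbz/2\bbz$ and $K_{1}(B_{n+2}^{2n+1}(q))\cong\bbz$ independently of $q$; on that route the crux becomes this completeness statement.
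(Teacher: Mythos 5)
Your proposal follows essentially the same route as the paper: both use the $q$-independence $B_{n+1}^{2n+1}(q)=B_{n+1}^{2n+1}(0)$ to view all $\chi_{n+2}^{q}$ as homogeneous extensions of one fixed algebra, show that the Busby invariants $\tau_t$ form a point-norm continuous (hence homotopic) family via exactly the norm estimate on the generators $x_{j,t}$ that you indicate, and then invoke the homotopy invariance property of $B_{n+1}^{2n+1}(q)$ (Lemma \ref{nuclearity and homotopy invariance}) to get equality of the classes in $\operatorname{Ext}_{PPV}(\bbbt,B_{n+1}^{2n+1}(q))$ and hence isomorphism of the middle $C^{*}$-algebras. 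The only real difference is that the gluing step you single out as the hard part --- assembling the fibres into one homogeneous extension over $\bbbt\times[q_{0},q_{1}]$ and pulling back along $i_{q_0},i_{q_1}$ --- is precisely the content of \cite[Proposition 5.7]{PimPopVoi-1979aa}, which the paper cites directly rather than reproving, so your outline is correct but needlessly re-derives that step.
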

	
	\begin{proof}
		Let \(\tau_q: B_{n+1}^{2n+1}(q) = B_{n+1}^{2n+1}(0) \to Q(\mathbb{T})\) be the Busby invariant corresponding to the short exact sequence \(\chi_{n+1}\) mentioned in Lemma \ref{shortexactseq}. To show that \(B_{n+2}^{2n+1}(q)\) and \(B_{n+2}^{2n+1}(q')\) are isomorphic, it suffices to show that the equivalence classes \([\tau_q]\) and \([\tau_{q'}]\) are equal in \( \mathrm{Ext}_{\mathrm{PPV}}(\mathbb{T}, B_{n+1}^{2n+1}(q)) \).
		
		We claim that \(\tau_q\) and \(\tau_{q'}\) are homotopic. Since \(\tau_t\) is a homomorphism for \(t \in [q, q']\) and considering the topology of point-norm convergence as mentioned in \cite{PimPopVoi-1979aa}, it suffices to show that \(t \mapsto \tau_t\) is continuous. If a sequence \(\{t_m\} \subseteq [q, q']\) converges to \(t\), then for each generator \(x_{j,t} \in B_{n+1}^{2n+1}(t)\), \(1 \leq j \leq n+2\),
		\begin{align*}
			\|\tau_{t_m}(x_{j,t}) - \tau_t(x_{j,t})\| &\leq \|\tau_{t_m}(x_{j,t}) - \tau_{t_m}(x_{j,t_m})\| + \|\tau_{t_m}(x_{j,t_m}) - \tau_t(x_{j,t})\|\\
			&\leq \|x_{j,t} - x_{j,t_m}\| + \|x_{j,t_m} - x_{j,t}\|_{C(\mathbb{T})\otimes \mathscr{T}^{\otimes (n+1)}}
		\end{align*}
		which implies that \(\tau_{t_m}(x_{j,t}) \to \tau_t(x_{j,t})\) in norm. Since \(\|\tau_t\| \leq 1\) for all \(t \in [q, q']\), this implies that \(\tau_{t_m}(P) \to \tau_t(P)\) in norm for any polynomial \(P\) in the generators \(\{x_{j,t}, x_{j,t}^* : 1 \leq j \leq n+2\}\). As these polynomials are dense in \(B_{n+1}^{2n+1}(t)\), it follows that \(\tau_{t_m}(x) \to \tau_t(x)\) in norm for each \(x \in B_{n+1}^{2n+1}(t)\). Hence, \(\tau_q\) and \(\tau_{q'}\) are homotopic.
		
		By Lemma \ref{nuclearity and homotopy invariance}, \(B_{n+1}^{2n+1}(q)\) is a nuclear \(C^*\)-algebra with the homotopy invariance property. Thus, by \cite[Proposition 5.7]{PimPopVoi-1979aa}, \([\tau_q] = [\tau_{q'}]\).
	\end{proof}

	\begin{crlre}\label{$q$-invariance for 5/3}
		For $q, q^{\prime} \in (0,1)$, the $C^*$-algebras $C(SO_q(5)/SO_q(3))$ and $C(SO_{q^{\prime}}(5)/SO_{q^{\prime}}(3))$ are isomorphic.
	\end{crlre}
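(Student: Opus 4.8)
The plan is to read off the statement from Theorem~\ref{Theorem 5.6} by specializing to $n=2$, where the indexing collapses in our favour. First I would note that $SO_q(5)/SO_q(3)$ is the space $SO_q(2n+1)/SO_q(2n-1)$ with $n=2$, so the faithful representation $\phi_{\omega_{2n}}=\phi_{\omega_4}$ constructed in the previous section identifies $C(SO_q(5)/SO_q(3))$ with its image $B_{2n}^{2n+1}(q)=B_4^5(q)$. The arithmetic point is that for $n=2$ one has $n+2=4=2n$; hence the algebra $B_{n+2}^{2n+1}$ appearing in Theorem~\ref{Theorem 5.6} is exactly $B_{2n}^{2n+1}$, i.e. a copy of the whole quotient $C^*$-algebra rather than a proper intermediate subalgebra.

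With this identification in place, the proof becomes a one-line deduction: Theorem~\ref{Theorem 5.6} gives $B_{n+2}^{2n+1}(q)\cong B_{n+2}^{2n+1}(q')$ for all $q,q'\in(0,1)$, which for $n=2$ reads $B_4^5(q)\cong B_4^5(q')$. Composing with the isomorphisms $C(SO_q(5)/SO_q(3))\cong B_4^5(q)$ and $C(SO_{q'}(5)/SO_{q'}(3))\cong B_4^5(q')$ furnished by faithfulness of $\phi_{\omega_4}$ then yields $C(SO_q(5)/SO_q(3))\cong C(SO_{q'}(5)/SO_{q'}(3))$.

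I do not expect any genuine obstacle at this stage, since all the substantive work is already done upstream: the short exact sequences $\chi_k$ from Lemma~\ref{shortexactseq}, the verification in Lemma~\ref{lemma-homogeneous} that these are unital homogeneous extensions, the nuclearity and homotopy invariance property established in Lemma~\ref{nuclearity and homotopy invariance}, and the point--norm continuity and homotopy argument in $\mathrm{Ext}_{\mathrm{PPV}}(\bbbt,B_{n+1}^{2n+1}(q))$ carried out in the proof of Theorem~\ref{Theorem 5.6}. The only care needed is bookkeeping: being explicit that $B_{2n}^{2n+1}(q)$ is the full quotient algebra precisely because $\phi_{\omega_{2n}}$ was shown to be faithful, and noting that it is exactly the coincidence $2n=n+2$ (forcing $n=2$) that makes Theorem~\ref{Theorem 5.6} yield $q$-invariance of the entire homogeneous space rather than of a proper intermediate subalgebra — which is why this corollary concerns $SO_q(5)/SO_q(3)$ specifically among the odd family.
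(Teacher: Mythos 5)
Your proposal is correct and is exactly the derivation the paper intends: the corollary is stated without proof immediately after Theorem~\ref{Theorem 5.6} because for $n=2$ one has $n+2=2n$, so $B_{n+2}^{2n+1}(q)=B_4^5(q)$ is the image of $C(SO_q(5)/SO_q(3))$ under the faithful representation $\phi_{\omega_4}$, and Theorem~\ref{Theorem 5.6} then gives the isomorphism for all $q,q'\in(0,1)$. Your bookkeeping remarks (faithfulness of $\phi_{\omega_{2n}}$ and the coincidence $2n=n+2$ forcing $n=2$) are precisely the right points to make explicit.
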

	
	\begin{proof}
		Observe that the $C^*$-algebra $C\left(SO_q(5)/SO_q(3)\right)$ is isomorphic to $B_4^5(q)$. Therefore, the proof follows by Theorem \ref{Theorem 5.6}.
	\end{proof}
	
	\subsection{$q$-invariance of $SO_q(4)/SO_q(2)$ and $SO_q(6)/SO_q(4)$}
	
	In this subsection, we prove that the $C^*$-algebras $D_k^{2n}(q)$ are $q$-invariant for all $1 \leq k \leq n+2$. This, in particular, ensures the $q$-invariance of $SO_q(4)/SO_q(2)$ and $SO_q(6)/SO_q(4)$.
	
	\blmma \label{lemma-homogeneous1}
	For $1 < k \leq 2n$, the short exact sequence 
	\[
	\xi_{k}: \quad 0\longrightarrow C(\bbbt) \otimes \clk \xrightarrow{\iota} D_k^{2n}(q) \xrightarrow{\rho_{k
	}}  D_{k-1}^{2n}(q) \longrightarrow  0.
	\]
	is a unital homogeneous extension of $D_{k-1}^{2n}(q)$ by 
	$C(\bbbt)\otimes \clk$. 
	\elmma
	\prf Since  $D_{k+1}^{2n}(q)$ is unital, the given extension is unital. Let $\mu^k: D_{k}^{2n}(q) \rightarrow Q(\bbbt)$ be its Busby invariant.  For $t_0 \in \bbbt$, 
	define $\mu_{t_0}^k : D_{k}^{2n}(q) \rightarrow Q$ to be $ev_{t_0} \circ \mu^k$.  Assume that 
	$\mathfrak{J}_{t_0}=\ker(\mu_{t_0}^k)$. To show that $\mathfrak{J}_{t_0}=\left\{0\right\}$ for all $t_0 \in \bbbt$,  we split the proof in two cases. 
	\\
	\textbf{Case 1: $k\neq n+1.$} It follows  by a similar argument given in Lemma \ref{lemma-homogeneous}. \\
	\textbf{Case 2: $k= n+1.$}  From the diagram of representations of $C(SO_q(2n))$ described in \cite{ChaSau-2018ab}, we have the following. \\ 
	\begin{IEEEeqnarray*}{rCl}
		\mu_{t_0}^k(y_k^k) &=& t_0[ \underbrace{q^N \otimes \cdots \otimes q^N}_{(k-1)\mbox{ copies }} \otimes \sqrt{1-q^{2N}}(S^2)^*]
		\\
		\mu_{t_0}^k(y_k^k(y_k^k)^*) &=& t_0[ \underbrace{q^{2N} \otimes \cdots \otimes q^{2N}}_{(k-1)\mbox{ copies }} \otimes (1-q^{2N})]=[ \underbrace{q^{2N} \otimes \cdots \otimes q^{2N}}_{(k-1)\mbox{ copies }} \otimes 1]\\
		\mu_{t_0}^k(y_k^k\mathbbm{1}_{\{y_k^k(y_k^k)^*=1\}}) &=&t_0[\underbrace{p\otimes \cdots \otimes p}_{ (k-1) \mbox{ copies }} \otimes \sqrt{1-q^{2N}}(S^2)^*] 
		=t_0[\underbrace{p\otimes \cdots \otimes p}_{ (k-1) \mbox{ copies }} \otimes (S^2)^*]. 
	\end{IEEEeqnarray*}
	Observe that in both cases,  $y_k^k \notin \mathfrak{J}_{t_0}^k$.  Thus, the only primitive ideals that contains $\mathfrak{J}_{t_0}^k$ are  maximal ideals $I_t$, and hence 
	$$\mathfrak{J}_{t_0}^k =\cap_{I_t \subset \mathfrak{J}_{t_0}^k } I_t= I_F(\bbbt)\otimes \clk$$ for some closed 
	subset $F$ of $\bbbt$ where $I_F(\bbbt)$ is the closed ideal  of all continuous functions on $\bbbt$ vanishing on $F$. 
	Define the homomorphisms  $$\eta_1: C(\bbbt) \rightarrow Q(\ell^2(\bbn_0)); \quad \bbt \mapsto [S^*],\quad \mbox{ and } \quad \eta_2: C(\bbbt) \rightarrow Q(\ell^2(\bbn_0)); \quad \bbt \mapsto  [(S^*)^2].$$
	Both the  maps $\eta_1$ and $\eta_2$ are injective as the spectrum of $[S^*]$  and $ [(S^2)^*] $ are $\bbbt$.
	Hence  in both cases, we have 
	$$\tau_{t_0}(f(t)\otimes p^{\otimes (k-1)} ) \neq 0$$ 
	for any nonzero function $f$ on $\bbbt$, which further implies that $F=\bbbt$ and $\mathfrak{J}_{t_0}=\left\{0\right\}$. 
	\qed

	From now on, for \(1 \leq k \leq 2n-1\), we denote \(D_k^{2n}\) to specify its \(q\)-parametrization as \(D_k^{2n}(q)\), and the generators \(y_j\) for \(1 \leq j \leq k+1\) as \(y_{j,q}\). The limit of \(y_{j,q}\) as \(q \to 0\) will be denoted by \(y_{j,0}\). The \(C^*\)-algebra generated by \(\{y_{j,0}: 1 \leq j \leq k+1\}\) is denoted by \(D_k^{2n}(0)\).
	\blmma 
	Let $q \in (0,1)$ and  $1 \leq k \leq n$. Then one has
	$D_{k}^{2n}(q) =D_{k}^{2n}(0)$.
	\elmma 
	
	\begin{proof}
		It follows from the diagram  of representations of $C(SO_q(2n)/SO_q(2n-2))$ (see \cite{ChaSau-2018ab} and Theorem \ref{repn of D}) that $D_{k}^{2n}(q)$ is same as the $C^*$-algebra underlying the odd dimensional quantum sphere $S_q^{2n+1}$. Hence the claim follows from  \cite[Lemma 3.2]{PalSun-2010aa}. 
	\end{proof}
	
	\begin{lmma}\label{D-1}
		For all $q\in (0,1)$, we have $D_{n+1}^{2n}(q) = D_{n+1}^{2n} (0)$.
	\end{lmma}
	
	\begin{proof}
		We only prove the lemma for $n=2$, and the proof for a general $n$ would follow in similar way. For each $1\leq j\leq n+2$, we have 
		$$\psi_{\omega_{n+1}}\left(v_{j'}^{2n}\right) = t\otimes \sqrt{1-q^{2N}}S^*\otimes \pi_{s_2}\left(v_{j'}^{2n}\right) - t\otimes q^N\otimes \pi_{s_2}\left(v_{j'}^{2n-1}\right).$$
		Therefore $D_{n+1}^{2n}(q)$ is generated by 
		\begin{equation*}
			\begin{aligned}[c] 
				y_{1,q} &= t\otimes \sqrt{1-q^{2N}}S^* \otimes \sqrt{1-q^{2N}}S^*,\\
				y_{3,q} &= -t\otimes \sqrt{1-q^{2N}}S^*\otimes q^N,
			\end{aligned}
			\quad
			\begin{aligned}[c]
				y_{2,q} &= -t\otimes q^N\otimes \sqrt{1-q^{2N}}S^*,\\
				y_{4,q} &= -t\otimes q^N\otimes q^N.
			\end{aligned}
		\end{equation*}
		Moreover, $D_{n+1}^{2n}(0)$ is generated by 
		$$y_{1,0}= t\otimes S^*\otimes S^*,\, y_{2,0}= -t\otimes p\otimes S^*,\, y_{3,0}= -t\otimes S^*\otimes p, \text{ and } y_{4,0}= -t\otimes p\otimes p.$$
		
		We will prove the lemma by establishing the following claims.

		\setcounter{claim}{0}
		\begin{claim}
			$\{1\otimes p\otimes 1,\, 1\otimes 1\otimes p,\, 1\otimes p\otimes p\} \subset D_{n+1}^{2n}(q)\cap D_{n+1}^{2n}(0)$.
		\end{claim}
		
		\begin{claim}
			If $D$ is a diagonal operator on $\ell^2(\mathbb{N}_0)$, specified by $De_i = d(i)e_i$, $i\in\mathbb{N}_0$ where $\lim_{i\rightarrow\infty} d(i)$ is either $0$ or $1$, then 
			$$\{1\otimes D\otimes D,\, 1\otimes p\otimes D,\, 1\otimes D\otimes p\} \subset D_{n+1}^{2n}(q) \cap D_{n+1}^{2n}(0).$$
		\end{claim}
		
		\begin{claim}
			For each $j\in\{1,2,\ldots, n+2\}$, we have $y_{j,0}\in D_{n+1}^{2n}(q)$.
		\end{claim}
		
		\begin{claim}
			For each $j\in\{1,2,\ldots, n+2\}$, we have $y_{j,q}\in D_{n+1}^{2n}(0)$.
		\end{claim}
		
		\item[\textbf{Proof of Claim 1:}] First note that 
		$$\{1\otimes p\otimes 1, 1\otimes 1\otimes p, 1\otimes p\otimes p\} = \{y_{j+1,0}^* y_{j+1,0} : j=1,2,3\} \subset D_{n+1}^{2n}(0).$$
		
		For other inclusions, it suffices to show that
		\[
		\{1\otimes q^{2N} \otimes 1,\, 1\otimes 1\otimes q^{2N},\, 1\otimes q^{2N}\otimes q^{2N}\} \subset D_{n+1}^{2n}(q),
		\]
		which follows from the relations:
		\[
		1\otimes q^{2N}\otimes 1 = y_{2,q}^* y_{2,q} - q^2\cdot y_{4,q}^* y_{4,q}, \, 1\otimes 1\otimes q^{2N} = y_{3,q}^* y_{3,q} - q^2\cdot y_{4,q}^* y_{4,q}, \text{ and }
		1\otimes q^{2N}\otimes q^{2N} = y_{4,q}^* y_{4,q}.
		\]
		
		\item[\textbf{Proof of Claim 2:}] Using Claim 1, it suffices to prove for each $i,j\in\mathbb{N}_0$ that 
		$$1\otimes p_i\otimes p_j \in D_{n+1}^{2n}(q) \cap D_{n+1}^{2n}(0).$$
		Since $C(\mathbb{T})\otimes\mathcal{K}\otimes\mathcal{K} \subseteq D_{n+1}^{2n}(q)$, we have $1\otimes p_i\otimes p_j \in D_{n+1}^{2n}(q)$. To ensure the other belonging note that
		
		$$1\otimes p_i\otimes p_j = \begin{cases}
			y_{1,0}^i y_{2,0}^{j-i} (1\otimes p\otimes p) (y_{2,0}^*)^{j-i} (y_{1,0}^*)^i &\text{ if } i\leq j\\
			y_{1,0}^j y_{3,0}^{i-j}(1\otimes p\otimes p) (y_{3,0}^*)^{i-j}(y_{1,0}^*)^j &\text{ if } i\geq j
		\end{cases}.$$
		
		\item[\textbf{Proof of Claim 3:}] Let $D$ be the diagonal operator on $\ell^2(\mathbb{N}_0)$, specified by $$De_i=\begin{cases}
			0 &\text{ if } i=0\\
			\frac{1}{\sqrt{1-q^{2i}}}e_i &\text{ if } i\in\mathbb{N}
		\end{cases}.$$ The proof then follows by observing that 
		\begin{equation*}
			\begin{aligned}[c]
				y_{1,0} &= (1\otimes D\otimes D)y_{1,q},\\
				y_{3,0} &= (1\otimes D\otimes p)y_{3,q},
			\end{aligned}
			\quad
			\begin{aligned}[c]
				y_{2,0} &= (1\otimes p\otimes D)y_{2,q},\\
				y_{4,0} &= (1\otimes p\otimes p)y_{4,q}.
			\end{aligned}
		\end{equation*}
		
		\item[\textbf{Proof of Claim 4:}] By Claim 2, we obtain
		\[
		y_{1,q} = \left(1 \otimes \sqrt{1-q^{2N}} \otimes \sqrt{1-q^{2N}}\right) y_{1,0} \in D_{n+1}^{2n}(0).
		\]
		To show that \( y_{2,q} \in D_{n+1}^{2n}(0) \), it suffices, by the binomial expansion of \( \sqrt{1 - q^{2N}} \), to prove for each \( l \in \mathbb{N}_0 \) that $t \otimes q^N \otimes q^{2lN} S^* \in D_{n+1}^{2n}(0)$. We first establish that \( t \otimes q^{\frac{N}{2}} \otimes S^* \in D_{n+1}^{2n}(0) \). This is sufficient if we can show for each \( i \in \mathbb{N}_0 \) that $t \otimes p_i \otimes S^* \in D_{n+1}^{2n}(0)$.
		Observe that \( t \otimes p \otimes S^* = -y_{2,0} \in D_{n+1}^{2n}(0) \). For \( i \in \mathbb{N} \), we have
		\[
		t \otimes p_i \otimes S^* = -y_{1,0}^i y_{2,0} \left( y_{1,0}^* \right)^i + \sum_{r=0}^{i-1} y_{1,0}^{r+1} y_{3,0}^{i-r-1} y_{4,0} \left( y_{3,0}^* \right)^{i-r} \left( y_{1,0}^* \right)^r \in D_{n+1}^{2n}(0).
		\]
		Hence, \( t \otimes q^{\frac{N}{2}} \otimes S^* \in D_{n+1}^{2n}(0) \), as required. Using the proof of Claim 2, it follows that for each \( l \in \mathbb{N}_0 \),
		$1 \otimes q^{\frac{N}{2}} \otimes q^{2lN} \in D_{n+1}^{2n}(0)$, and thus,
		\[
		t \otimes q^N \otimes q^{2lN} S^* = \left( 1 \otimes q^{\frac{N}{2}} \otimes q^{2lN} \right) \left( t \otimes q^{\frac{N}{2}} \otimes S^* \right) \in D_{n+1}^{2n}(0).
		\]
		
		Similarly, to show \( y_{3,q} \in D_{n+1}^{2n}(0) \), it suffices to show for each \( l \in \mathbb{N}_0 \) that 
		$t \otimes q^{2lN} S^* \otimes q^N \in D_{n+1}^{2n}(0)$,
		which follows from showing $t \otimes S^* \otimes q^{\frac{N}{2}} \in D_{n+1}^{2n}(0)$. Observe that \( t \otimes S^* \otimes p = -y_{3,0} \in D_{n+1}^{2n}(0) \). For \( i \in \mathbb{N} \),
		\[
		t \otimes S^* \otimes p_i = -y_{1,0}^i y_{3,0} \left(y_{1,0}^*\right)^i + \sum_{r=0}^{i-1} y_{1,0}^{r+1} y_{2,0}^{i-r-1} y_{4,0} \left(y_{2,0}^*\right)^{i-r} \left(y_{1,0}^*\right)^r \in D_{n+1}^{2n}(0),
		\] 
		hence \( t \otimes S^* \otimes q^{\frac{N}{2}} \in D_{n+1}^{2n}(0) \).
		
		Finally, for \( i,j \in \mathbb{N}_0 \),
		\[
		t \otimes p_i \otimes p_j = \begin{cases}
			-y_{1,0}^i y_{2,0}^{j-i} y_{4,0}\left(y_{2,0}^*\right)^{j-i}\left(y_{1,0}^*\right)^i &\text{ if } i \leq j\\
			-y_{1,0}^j y_{3,0}^{i-j} y_{4,0} \left(y_{3,0}^*\right)^{i-j}\left(y_{1,0}^*\right)^j &\text{ if } i \geq j
		\end{cases}.
		\]
		Thus, \( t \otimes p_i \otimes p_j \in D_{n+1}^{2n}(0) \), implying \( y_{4,q} \in D_{n+1}^{2n}(0) \).
		
	\end{proof}
	
	\begin{crlre}
		The quotient space $SO_q(4)/SO_q(2)$ is $q$-invariant.
	\end{crlre}
	
	\begin{proof}
		Observe that the $C^*$-algebra $C\left(SO_q(4)/SO_q(2)\right)$ is isomorphic to $D_3^4(q)$. Therefore, by Lemma \ref{D-1}, the quotient space $SO_q(4)/SO_q(2)$ is $q$-invariant.
	\end{proof}
	
	\begin{lmma}\label{nuclearity and homotopy invariance1}
		For each $q \in (0,1)$ and $1 \leq k \leq 2n-1$, the $C^*$-algebra $D_k^{2n}(q)$ is nuclear and has the homotopy invariance property.
	\end{lmma}
	\prf The proof is similar to that of Lemma \ref{nuclearity and homotopy invariance}, hence omitted.
	\qed
	\begin{thm}
		
		For $q, q^{\prime} \in (0,1)$, the \(C^*\)-algebras \(D_{n+2}^{2n}(q)\) and \(D_{n+2}^{2n}(q')\) are isomorphic. As a consequence, 
		the $C^*$-algebra $C(SO_q(6)/SO_q(4))$ is isomorphic to $C(SO_{q^{\prime}}(6)/SO_{q^{\prime}}(4))$ for any $q, q^{\prime} \in (0,1)$.
	\end{thm}
	
	\prf We will only outline the proof as all the calculations involved are similar to the case of $B_{n+2}^{2n+1}(q)$. 
	\begin{enumerate}[(i)]
		\item First,  the homomorphisms $\mu_q^k$ and $\mu_{q^{\prime}}^k$ are homotopic. 
		\item Moreover, by Theorem \ref{D-1} and Lemma \ref{lemma-homogeneous1}, one has $[\mu_q^k]_{su} , \, [\mu_{q^{\prime}}^k]_{su} \in \mathrm{Ext}_{\mathrm{PPV}}(\bbbt, D_{n+1}^{2n}(0))$ .
		\item By Lemma \ref{nuclearity and homotopy invariance1}, $D_{n+1}^{2n}(0)$ is nuclear and has the homotopy invariance property. Therefore, by \cite[Proposition 5.7]{PimPopVoi-1979aa}, we have   $[\mu_q^k]_{su} =  [\mu_{q^{\prime}}^k]_{su}$.
		\item Hence, the middle $C^*$-algebras $D_{n+2}^{2n}(q)$ and $D_{n+2}^{2n}(q^{\prime})$ of the extensions $\mu_q^k$ and $\mu_{q^{\prime}}^k$, respectively,  are isomorphic. 	In particular, by taking \(n=3\),  we get an isomorphism between  $C(SO_q(6)/SO_q(4))$ and  $C(SO_{q^{\prime}}(6)/SO_{q^{\prime}}(4))$.
	\end{enumerate} 
	\qed 
	
	\noindent\begin{footnotesize}\textbf{Acknowledgement}:
		This work was carried out while the first and second named authors were at IIT Gandhinagar, and they gratefully acknowledge the support provided by the institute. The second named author also gratefully acknowledges support from the NBHM research project grant 02011/19/2021/NBHM(R.P)/R\&D II/8758, and INSPIRE Faculty Fellowship research grant DST/INSPIRE/04/2021/002620 (DST, Govt. of India). The third named author gratefully acknowledges support from NBHM grant 02011/19/2021/NBHM(R.P)/R\&D II/8758.
	\end{footnotesize}
	
	\noindent \textbf{Data Availability:} No data is used in the research described in this paper.
	
	\section*{Declarations}
	\noindent \textbf{Conflict of interest:} 
	The authors declares that they have no conflict of interest.

	\bigskip

	\noindent{\sc Akshay Bhuva} (\texttt{akshaybhuva98@gmail.com})\\
	{\footnotesize Department of Mathematics,\\ Nirma University, Institute of Technology,\\  Ahmedabad 382481, Gujarat, India}\\
	
	\noindent{\sc Surajit Biswas} (\texttt{surajit.b@srmap.edu.in},  \texttt{241surajit@gmail.com})\\
	{\footnotesize Department of Mathematics,\\ SRM University AP, Neerukonda,\\ Mangalagiri Mandal, Guntur District,\\ Andhra Pradesh 522240, India}\\
	
	\noindent{\sc Bipul Saurabh} (\texttt{bipul.saurabh@iitgn.ac.in},  \texttt{saurabhbipul2@gmail.com})\\
	{\footnotesize Department of Mathematics,\\ Indian Institute of Technology, Gandhinagar,\\  Palaj, Gandhinagar 382055, India}

\end{document}